\documentclass{amsart}
\usepackage[letterpaper,margin=1in]{geometry}
\usepackage{amsthm,amsmath,amssymb,enumerate,amsfonts,mathtools,graphicx}
\usepackage[utf8]{inputenc}
\usepackage{ytableau}
\usepackage{array,booktabs}

\ytableausetup{boxsize=1.25em}

\newcommand{\RSyt}[1]{%
  \raisebox{-.45\height}{\(\ytableaushort{#1}\)}%
}
\usepackage{tikz}
\usepackage{titletoc}
\usepackage{xcolor}
\usetikzlibrary{arrows,decorations.pathmorphing}
\usetikzlibrary{arrows.meta,positioning,fit,calc,matrix,decorations.pathreplacing}
\usepackage[colorlinks=true,linkcolor=blue,citecolor=blue,urlcolor=blue]{hyperref}
\usepackage{aliascnt}
\usepackage[nameinlink,capitalise,noabbrev]{cleveref}

\makeatletter
\renewcommand{\l@section}{%
  \@tocline{1}{6pt plus2pt}{0pt}{2.8em}{\bfseries}%
}
\renewcommand{\l@subsection}{%
  \@tocline{2}{2pt plus1pt}{2.8em}{3.6em}{\small}%
}
\makeatother

\newtheorem{theorem}{Theorem}[section]
\crefname{theorem}{Theorem}{Theorems}
\Crefname{theorem}{Theorem}{Theorems}

\newaliascnt{defprop}{theorem}
\newtheorem{def-prop}[defprop]{Definition-Proposition}
\aliascntresetthe{defprop}
\crefname{defprop}{Definition-Proposition}{Definition-Propositions}
\Crefname{defprop}{Definition-Proposition}{Definition-Propositions}

\newaliascnt{proposition}{theorem}
\newtheorem{proposition}[proposition]{Proposition}
\aliascntresetthe{proposition}
\crefname{proposition}{Proposition}{Propositions}
\Crefname{proposition}{Proposition}{Propositions}

\newaliascnt{conj}{theorem}

\aliascntresetthe{conj}
\crefname{conj}{Conjecture}{Conjectures}
\Crefname{conj}{Conjecture}{Conjectures}

\newaliascnt{lemma}{theorem}
\newtheorem{lemma}[lemma]{Lemma}
\aliascntresetthe{lemma}
\crefname{lemma}{Lemma}{Lemmas}
\Crefname{lemma}{Lemma}{Lemmas}

\newaliascnt{cor}{theorem}
\newtheorem{cor}[cor]{Corollary}
\aliascntresetthe{cor}
\crefname{cor}{Corollary}{Corollaries}
\Crefname{cor}{Corollary}{Corollaries}

\theoremstyle{definition}
\newaliascnt{ex}{theorem}
\newtheorem{ex}[ex]{Example}
\aliascntresetthe{ex}
\crefname{ex}{Example}{Examples}
\Crefname{ex}{Example}{Examples}

\newaliascnt{quest}{theorem}
\newtheorem{quest}[quest]{Question}
\aliascntresetthe{quest}
\crefname{quest}{Question}{Questions}
\Crefname{quest}{Question}{Questions}

\newaliascnt{defin}{theorem}
\newtheorem{defin}[defin]{Definition}
\aliascntresetthe{defin}
\crefname{defin}{Definition}{Definitions}
\Crefname{defin}{Definition}{Definitions}

\theoremstyle{remark}
\newtheorem*{remark}{Remark}

\makeatletter
\newcommand{\Rmnum}[1]{\expandafter\@slowromancap\romannumeral #1@}
\makeatother

\newcommand{\Ext}{\operatorname{Ext}}

\newcommand{\Cat}{\operatorname{Cat}}
\newcommand{\wt}{\operatorname{wt}}
\newcommand{\Ttilde}{\widetilde{T}}

\newcommand{\F}{\mathbb{F}}

\newcommand{\Y}{\mathscr{Y}}

\newcommand{\std}{\operatorname{Std}}

\renewcommand{\k}{\mathbf{k}}

\newcommand{\GL}{\operatorname{GL}}

\newcommand{\sgn}{\operatorname{sgn}}

\DeclareMathOperator{\Dem}{Dem}
\DeclareMathOperator{\Red}{Red}
\DeclareMathOperator{\pos}{pos}
\DeclareMathOperator{\Supp}{Supp}
\DeclareMathOperator{\Fwd}{Fwd}
\DeclareMathOperator{\Bwd}{Bwd}

\makeatletter
\newcommand{\affiliations}[1]{\gdef\@affiliations{#1}}
\let\@affiliations\@empty
\def\@setauthors{%
  \begingroup
  \def\thanks{\protect\thanks@warning}%
  \trivlist
  \centering\footnotesize \@topsep30\p@\relax
  \advance\@topsep by -\baselineskip
  \item\relax
  \author@andify\authors
  \def\\{\protect\linebreak}%
  \MakeUppercase{\authors}%
  \ifx\@empty\contribs
  \else
    ,\penalty-3 \space \@setcontribs
    \@closetoccontribs
  \fi
  \ifx\@empty\@affiliations
  \else
    \par\vskip.45em
    {\normalfont\scriptsize\@affiliations\par}%
  \fi
  \endtrivlist
  \endgroup
}
\makeatother

\title{Random permutations from Bott--Samelson varieties}
\author[J. Li]{Jingqi Li\({}^{1}\)}

\author[H. Yin]{Haorun Yin\({}^{2}\)}

\author[W. Yu]{Wenbin Yu\({}^{3}\)}

\author[S. Zeng]{Shixuan Zeng\({}^{4}\)}

\affiliations{%
  \begin{tabular}{c}
  \textsuperscript{1} Fudan University, Shanghai, China; \texttt{leejingqi3@gmail.com}\\
  \textsuperscript{2} China Agricultural University, Beijing, China; \texttt{archangelyhr@gmail.com}\\
  \textsuperscript{3} Guangdong Technion--Israel Institute of Technology, Shantou, Guangdong, China\\
  \texttt{yu24927@gtiit.edu.cn}\\
  \textsuperscript{4} University of Science and Technology of China, Hefei, Anhui, China; \texttt{zsx0515@mail.ustc.edu.cn}
  \end{tabular}}

\thanks{The authors are listed alphabetically. Jingqi Li and Shixuan Zeng made the principal contributions to the formulation of the main results and the proofs.}

\begin{document}
\begin{abstract}
Motivated by the random pipe dream model of \cite{MPPY2025}, we study a family of probability distributions on \(S_n\) arising from Bott--Samelson varieties over finite fields. More precisely, for a word \(R\), we consider the Bott--Samelson map \(\pi_R:\mathrm{BS}^R\to \mathcal F\ell_n\) and define a distribution \(\mathbb P_{R,q}\) by counting the \(\mathbb F_q\)-points in the inverse images of Schubert cells. For a suitable choice of parameters \(p_1=q/(1+q)\) and \(p_2=1/q\), this construction recovers a special case of the random pipe dream distribution.

The main problem considered in this note is to determine which combinatorial properties of a reduced word are detected by the distribution \(\mathbb P_{R,q}\). We prove the stronger statement that, for arbitrary reduced words \(R_1,R_2\), the equality
\[
\mathbb P_{R_1,q}=\mathbb P_{R_2,q}
\]
as functions of \(q\) holds if and only if \(R_1\) and \(R_2\) lie in the same commutation class. In particular, equality of distributions already forces the two words to represent the same permutation. The proof combines the Bott--Samelson interpretation with Demazure products, commutation-class invariants, and Hecke-algebraic arguments.
\end{abstract}
\maketitle

\tableofcontents

\section{Introduction}\label{sec:introduction}

Schubert polynomials, $\{\mathfrak{S}_w:w\in S_n\}$, introduced by Lascoux--Sch\"utzenberger \cite{LS82}, are polynomial representatives of cohomology classes of Schubert varieties in the complete flag variety $\mathcal{F} \ell_n$. These polynomials, as well as their various generalizations, are central objects in the study of Schubert calculus. In \cite{BB93}, Bergeron--Billey introduced \textbf{pipe dreams} as a combinatorial model for Schubert polynomials. 
Since their introduction, pipe dreams have seen numerous applications and connections in algebraic combinatorics; see, e.g., \cite{KM04,KY04,KM05,KoM05,WY12,KST12,MPP19,G21,GH23,HS24,PSW24} and references therein.


In \cite{MPPY2025}, Morales--Panova--Petrov--Yeliussizov studied asymptotic behaviors of random pipe dreams, motivated by the question of Stanley [\cite{Sta17b}, Section~5] on principal specializations of Schubert polynomials. They introduced a two-parameter probability distribution, which we denote by
\[
\mathbb P^{\mathrm{pd}}_{p_1,p_2} :S_n \longrightarrow [0,1],
\]
arising from a random pipe dream together with a random reduction procedure. They gave explicit conjectures about the probability distribution of the resulting permutation of a random pipe dream. 

In this note, we provide an alternative framework, using Bott--Samelson varieties, for studying special cases of random pipe dreams. Specifically, we start by observing that when
\[
p_1=\frac{q}{1+q},
\qquad
p_2=\frac{1}{q},
\]
the above random pipe dream distribution admits an algebraic realization in terms of point-counts on Bott--Samelson varieties over finite fields. More precisely, for a chosen reduced word \(R\) of \(w_0\), the probability distribution
\[
\mathbb P_{R,q}(w)
=
\frac{|\pi^{-1}_R(X^\circ_w)|}{(1+q)^{\binom n2}}
\]
is a special case of distributions obtained through random pipe dreams. Here all
varieties and maps are taken over \(\F_q\).

The Bott--Samelson perspective also suggests a natural generalization. Instead of only considering the word coming from the staircase pipe dream construction, one can attach such a probability distribution to more general words. Thus it becomes natural to ask which property of a word is remembered by this generalized probability distribution.

Motivated by the connection established above, we ask the following question:

\medskip

\begin{quest}\label{quest:main}
    
 Let \(R_1\) and \(R_2\) be reduced words in \(S_n\). When do the two distributions \(\mathbb P_{R_1,q}\) and \(\mathbb P_{R_2,q}\) agree as functions of \(q\)?

 \end{quest}

\medskip

Our main result is the following:

\begin{theorem}\label{thm:main}
Let \(R_1\) and \(R_2\) be two reduced words in \(S_n\). Then
\[
\mathbb P_{R_1,q}=\mathbb P_{R_2,q}
\]
as functions of \(q\) if and only if \(R_1\) and \(R_2\) lie in the same commutation class.
\end{theorem}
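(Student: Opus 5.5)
The plan is to translate everything into the Iwahori--Hecke algebra \(H_n(q)\) of \(S_n\), with basis \(\{T_w\}\) and relations \(T_s^2=(q-1)T_s+q\). For a word \(R=(s_{i_1},\dots,s_{i_k})\), the standard fibration description of \(\pi_R\) (an iterated \(\mathbb P^1\)-bundle over \(\F_q\), with each step mapping to \(P_{i_j}/B\)) should give
\[
(1+T_{s_{i_1}})(1+T_{s_{i_2}})\cdots(1+T_{s_{i_k}})=\sum_{u\in S_n} a^R_u(q)\,T_u .
\]
Here \(a^R_u(q)\) is the number of \(\F_q\)-points in the fibre of \(\pi_R\) over a point of \(X^\circ_u\). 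Hence \(|\pi_R^{-1}(X^\circ_u)|=q^{\ell(u)}a^R_u(q)\). Since the denominator \((1+q)^{k}\) is fixed by \(k=\ell(w)\), equality \(\mathbb P_{R_1,q}=\mathbb P_{R_2,q}\) is equivalent to equality of the Hecke elements \(C_{R_1}=C_{R_2}\).

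The ``if'' direction is then immediate. When \(|i-j|>1\) we have \(T_{s_i}T_{s_j}=T_{s_j}T_{s_i}\), so the two factors \((1+T_{s_i})\) and \((1+T_{s_j})\) commute, and a commutation move does not change \(C_R\).

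For the ``only if'' direction, I would proceed in two steps.

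First, I recover the permutation. The support of \(C_R\) is the set of Demazure products of subwords of \(R\), which is the Bruhat interval \([e,\Dem(R)]\). For a reduced word, \(\Dem(R)\) is the element \(w\) represented by \(R\). So the unique maximal element of the support is \(w\), and \(R_1,R_2\) are reduced words of the same permutation.

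Second, I recover the commutation class. I would pass to cruder invariants that are easier to compare, namely the specialisation \(q=1\). There \(H_n\) becomes the group algebra, and \(a^R_u(1)=N^R_u\) is the number of subwords of \(R\) whose ordinary product equals \(u\).

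Recall that a commutation class of reduced words of \(w\) is determined by its inversion order up to commutation. For type \(A\), this amounts to the following data: for every triple \(a<b<c\) whose three transpositions \((ab),(ac),(bc)\) are all inversions of \(w\), these reflections appear in \(R\) either in lexicographic or in reverse-lexicographic order. This is the Manin--Schechtman / higher Bruhat description, which I would cite.

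Let \(\beta_1,\dots,\beta_k\) be the inversion sequence, with \(t_j\) the reflection attached to position \(j\). Deleting one letter in position \(j\) gives \(w\,t_{j}\) (a fixed convention for the reflection), and these are all distinct, so codimension-one data carries no information. Deleting two letters \(j<l\) gives an element of the form \(w\,t\,t'\) built from \(t_j,t_l\). Two different pairs can give the same product, and this can also coincide with other subwords of length \(k-2\), exactly when the reflections generate a dihedral (\(S_3\)) subgroup. I would compute the counts \(N^R_u\) for \(\ell(u)\ge k-3\) in terms of the triple orientations. The target is a formula in which, for each inversion triple, some specific \(u\) (for instance \(w\) times the \(3\)-cycle on \(\{a,b,c\}\)) has multiplicity depending on the orientation of that triple. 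If \(q=1\) loses too much, I would instead compare the lowest-order coefficients in \(q-1\) of \(a^R_u(q)\), via Deodhar's subexpression formula, for \(u\) of codimension \(2\) or \(3\).

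The main obstacle is this last step. One must show that these local coefficients separate the two orientations of every triple, and that contributions from other, overlapping triples or from disjoint commuting pairs cannot cancel the difference. My first attempt would be an induction on \(\ell(w)\) by peeling off the last letter, using \(C_R=C_{R'}(1+T_s)\). If \(R_1\) and \(R_2\) end in the same letter \(s\), right-division by \(1+T_s\) (invertible after localising at \(q\ne -1\)) reduces to \(C_{R_1'}=C_{R_2'}\), and induction applies. Otherwise, one shows from the top coefficients that the two final letters must commute and can be moved to the end in both words, reducing to the previous case.
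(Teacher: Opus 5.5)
There is a genuine gap: the step that actually separates commutation classes is never carried out, and both routes you sketch for it fail as stated.

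Your primary route compares subword counts \(N^R_u\) at \(q=1\) in codimension \(2\) or \(3\). Whether a single specialisation \(\mathbb P_{R,q_0}\), in particular \(q_0=1\), already distinguishes commutation classes is exactly what the paper leaves open in \cref{quest:fixed-q}. You give no computation showing that the triple orientations survive there without cancellation.

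Your fallback induction rests on a false algebraic claim. The element \(1+T_s\) is not invertible in any localisation, since \((1+T_s)(T_s-q)=0\), equivalently \((1+T_s)^2=(1+q)(1+T_s)\). Right multiplication by \(1+T_s\) only records \(c_x+q\,c_{xs}\) on each coset \(\{x,xs\}\) with \(x<xs\). So \(C_{R_1'}(1+T_s)=C_{R_2'}(1+T_s)\) does not yield \(C_{R_1'}=C_{R_2'}\). Cancelling a common final letter is a consequence of the theorem (via \cref{prop:shorten}), not a tool for proving it. The claim that two different final letters must commute is asserted without proof, and even if granted it only returns you to the failed cancellation step.

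The missing idea lies in the data you dismissed. It is true that the ordinary products \(w\,t_j\) of one-letter deletions are independent of \(R\). But the coefficient of \(q^{k-1}\) in \(|\pi_R^{-1}(X_u^\circ)|=q^{\ell(u)}a^R_u(q)\), for \(u\ne w\), counts positions \(j\) with \(\Dem(i_1\cdots\widehat{i_j}\cdots i_k)=u\). These are one-letter \emph{Demazure} shortenings, and they do depend on the class.

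The paper first reduces to \(w_0\) by appending \(\std(w)\); your direct use of inversion triples for general \(w\) would also be fine at that point. It then uses
\((1+T_i)(1+T_{i+1})(1+T_i)-(1+T_{i+1})(1+T_i)(1+T_{i+1})=q(T_i-T_{i+1})\).
This shows that a braid move \(U\,i(i+1)i\,V\leftrightarrow U\,(i+1)i(i+1)\,V\) moves exactly one unit of this coefficient from \(\Dem(U(i+1)V)\) to \(\Dem(UiV)\). By \cref{prop:cycle-description}, both shortenings displace exactly the values in \([a,c]\). They differ only in whether the middle value \(b\) moves forward or backward relative to \(w_0\).

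Hence the signed statistic \(\Theta_R(a,b,c)\) changes by exactly minus the change of \(T_R(a,b,c)\). It follows that \(\Theta_R+T_R\) is constant on \(\Red(w_0)\) (in fact zero). So \(F(R)\) determines \(T_R\), and hence the commutation class by \cref{prop:commutation-classes-via-T}.
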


In other words, two reduced words give the same Bott--Samelson distribution precisely when they differ only by commutation moves. In particular, equality of distributions forces the two reduced words to represent the same permutation. The distribution \(\mathbb P_{R,q}\) is defined geometrically by point-counting when \(q\) is a prime power; after the combinatorial and Hecke-algebraic interpretation in \cref{subsec:hecke-characterization}, we regard it as a rational function of the indeterminate \(q\).

The rest of the paper is organized as follows. In \cref{sec:preliminary}, we review the basic objects used throughout the paper, including reduced words, the Demazure product, Schubert varieties, Bott--Samelson varieties, and the graph of commutation classes of \(w_0\). In \cref{sec:main-results}, we prove the main theorem. We first translate the Bott--Samelson point-counting distribution into a combinatorial and Hecke-algebraic form, reduce the general case to the longest element \(w_0\), and then use commutation-class invariants to characterize when two reduced words give the same distribution. In \cref{sec:further-remarks}, we record further Hecke-theoretic remarks and explain why the Temperley--Lieb quotient is too coarse for the present problem. Finally, \cref{app:hecke} recalls the Hecke-algebraic background used in the last section.

\section{Preliminaries}\label{sec:preliminary}

In this section we review the basic notation and background used throughout the paper, including reduced words, Demazure products, Schubert varieties, Bott--Samelson varieties, and commutation classes. For a comprehensive treatment of Coxeter groups and reduced words, see \cite{Humphreys1990,BjornerBrenti2005}.


\setcounter{subsection}{-1}
\subsection{Notation}\label{subsec:notation}

We fix the following notation throughout the paper.

Let \(S_n\) denote the symmetric group on \(n\) letters with standard Coxeter generators 
$s_i = (i \; i+1)$ for $1 \leq i \leq n-1$.  
The Coxeter relations are
\[
s_i^2=e,\quad 
s_i s_j=s_j s_i \ \text{for } |i-j|>1,\quad
s_i s_{i+1}s_i=s_{i+1}s_is_{i+1},
\]
where \(e\) denotes the identity element of \(S_n\).

For $w\in S_n$, let $\ell(w)$ denote its Coxeter length, and let
$\operatorname{Red}(w)$ be the set of all reduced words for $w$.

A \emph{word} is an ordered sequence
\[
R=i_1i_2\cdots i_L.
\]
Its \emph{generalized length} is defined by $L(R):=L$, and its \emph{Coxeter length} is
\[
\ell(R):=\ell\bigl(s_{i_1}s_{i_2}\cdots s_{i_L}\bigr).
\]
We call $R$ a \emph{reduced word} if $L(R)=\ell(R)$.

A word $T=t_1t_2\cdots t_m$ is called a \emph{subword} of $R=i_1i_2\cdots i_L$
if there exist indices $1\le a_1<\cdots<a_m\le L$ such that
$t_r=i_{a_r}$ for all $1\le r\le m$; equivalently, $T$ is obtained by selecting a
subsequence of digits from $R$.

Given two words
\[
R_1=i_1\cdots i_p,\qquad R_2=j_1\cdots j_q,
\]
we define their concatenation by
\[
\Cat(R_1,R_2):=i_1\cdots i_pj_1\cdots j_q.
\]
For notational simplicity, we will usually write
\[
R_1R_2
\]
in place of \(\Cat(R_1,R_2)\).

For a word \(R=i_1i_2\cdots i_L\), define
\[
w(R):=s_{i_1}s_{i_2}\cdots s_{i_L}.
\]
\[
\Dem(R):=s_{i_1}\ast s_{i_2}\ast\cdots\ast s_{i_L}.
\]
We will introduce the operator $*$ later. 

For a permutation $w\in S_n$, we use one-line notation $w=w(1)w(2)\cdots w(n)$; unless otherwise stated, this convention will be used throughout, and it should not be confused with the notation for a word $R=i_1i_2\cdots i_L$ in simple reflections.

When a word is required to be reduced, this will be stated explicitly. In particular, several constructions below, including Bott--Samelson varieties and the weighted subword model, are defined for arbitrary words.







\subsection{Reduced words and Demazure products}\label{subsec:reduced-demazure}
A reduced word representing $w\in S_n$ is a word $R=i_1\cdots i_{\ell(w)}$ such that
\[
w=s_{i_1}s_{i_2}\cdots s_{i_{\ell(w)}}.
\]

We first fix the two local moves on words that will be used throughout.
\begin{enumerate}
    \item \emph{Commutation move}: replace a consecutive subword \(ij\) by \(ji\) whenever \(|i-j|>1\).
    \item \emph{Braid move}: replace a consecutive subword $i(i+1)i$ by $(i+1)i(i+1)$ (or conversely).
\end{enumerate}

In particular, two reduced words $R=i_1\cdots i_\ell$ and $R'=j_1\cdots j_\ell$ for the same permutation $w\in S_n$ are said to differ by a \emph{single braid move} if there exists an index $t$ such that
\[
(i_t,i_{t+1},i_{t+2})=(k,k+1,k),\qquad
(j_t,j_{t+1},j_{t+2})=(k+1,k,k+1)
\]
for some $1\le k<n-1$, and $i_p=j_p$ for all $p\notin\{t,t+1,t+2\}$. That is,
\[
R=T_1\,k\,(k+1)\,k\,T_2,\qquad
R'=T_1\,(k+1)\,k\,(k+1)\,T_2,
\]
where \(T_1\) and \(T_2\) are two words. The definition of differing by a single commutation move is analogous. We write \(R\sim_C R'\) if \(R\) and \(R'\) differ by a single commutation move, and \(R\sim_B R'\) if they differ by a single braid move.

We will use the following fundamental result \cite[Theorem~3.3.1]{BjornerBrenti2005}.

\begin{theorem}[Matsumoto]\label{thm:Matsumoto}
For any $w\in S_n$ and any $R_1,R_2\in \Red(w)$, one can pass from $R_1$ to $R_2$ by a finite sequence of commutation moves and braid moves.
\end{theorem}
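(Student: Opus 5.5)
We argue by induction on \(\ell(w)\), following the standard exchange-property proof specialized to \(S_n\). If \(\ell(w)\le 1\) there is nothing to prove. For the inductive step, write \(R_1=a\,R_1'\) and \(R_2=b\,R_2'\), so that \(R_1'\in\Red(s_aw)\) and \(R_2'\in\Red(s_bw)\), both of length \(\ell(w)-1\). If \(a=b\), the induction hypothesis connects \(R_1'\) to \(R_2'\) by commutation and braid moves. Prefixing every word in that chain with \(a\) then connects \(R_1\) to \(R_2\).

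Now assume \(a\neq b\). Both \(s_a\) and \(s_b\) are then left descents of \(w\). The key intermediate claim is the following: if \(m_{ab}\) denotes the order of \(s_as_b\) (so \(m_{ab}=2\) if \(|a-b|>1\) and \(m_{ab}=3\) if \(|a-b|=1\)), and \(w_{ab}\) is the longest element of \(\langle s_a,s_b\rangle\), then
\[
w=w_{ab}\,u\quad\text{with}\quad \ell(w)=\ell(w_{ab})+\ell(u).
\]
I would prove this concretely in \(S_n\) using inversions. Recall that \(\ell(s_iw)<\ell(w)\) if and only if \(w^{-1}(i)>w^{-1}(i+1)\), i.e.\ the value \(i+1\) appears before \(i\) in the one-line notation of \(w\).

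If \(|a-b|>1\), the two value-pairs \(\{a,a+1\}\) and \(\{b,b+1\}\) are disjoint. Applying \(s_a\) does not change the relative order of \(b\) and \(b+1\), so \(s_b\) is still a descent of \(s_aw\), and we may take \(u=s_bs_aw\).

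If \(b=a+1\) (the case \(b=a-1\) is symmetric), the hypothesis says the values \(a+2,a+1,a\) appear in decreasing order of position. Left multiplication by \(s_a\), \(s_{a+1}\), \(s_a\) successively sorts these three values, and each step removes exactly one inversion. Hence \(u=s_as_{a+1}s_aw\) has \(\ell(u)=\ell(w)-3\).

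Fix any reduced word \(U\) of \(u\). In the commuting case, \(bU\) is a reduced word for \(s_aw\) and \(aU\) is a reduced word for \(s_bw\). By induction, \(R_1'\) is connected to \(bU\), so \(R_1\) is connected to \(abU\). A single commutation move takes \(abU\) to \(baU\), and by induction \(baU\) is connected to \(bR_2'=R_2\).

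In the braid case the argument is the same, with \(abaU\) and \(babU\) as the intermediate words. Induction applied to \(s_aw\) connects \(R_1'\) with \(baU\), and induction applied to \(s_bw\) connects \(R_2'\) with \(abU\). A single braid move then links \(abaU\) and \(babU\). Concatenating these chains gives the result.

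The main obstacle is the intermediate claim about \(w_{ab}\) being a length-additive left factor; everything else is formal bookkeeping with the induction hypothesis. In \(S_n\) this claim reduces to the explicit inversion count above. The only care needed is to use the left-descent convention (values versus positions) consistently.
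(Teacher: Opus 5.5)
Your proof is correct. The induction on $\ell(w)$ is sound, as is the descent criterion $\ell(s_iw)<\ell(w)\iff w^{-1}(i)>w^{-1}(i+1)$ and the check that $bU,aU$ (resp.\ $baU,abU$) are reduced words for $s_aw,s_bw$; the remaining steps are prefixing chains plus a single commutation or braid move.

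The paper gives no proof of its own and simply cites \cite[Theorem~3.3.1]{BjornerBrenti2005}. The standard argument behind that citation uses the same induction on length. The difference is the key fact that $w_{ab}$ is a length-additive left factor of $w$: the cited argument derives it from general Coxeter-group theory (exchange property / weak order), so it holds in any Coxeter group. You instead verify it by an explicit inversion count, which makes your proof self-contained but specific to $S_n$.
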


\begin{ex}
Let $w_0=54321\in S_5$ be the longest element.
Consider the two reduced words
\[
R_1=1213214321,
\qquad
R_2=1232143231.
\]
They are connected by a sequence involving both commutation moves  and braid moves:
\[
\begin{aligned}
1213214321
& \sim_C 1231214321
\sim_B 1232124321 \\
&\sim_C 1232142321
\sim_B 1232143231.
\end{aligned}
\]

\end{ex}

We now introduce the \emph{Demazure product} on $S_n$ and list the basic properties that will be used later.

\begin{defin}\label{def:demazure-simple}
For \(w\in S_n\) and \(1\le i\le n-1\), define
\[
w*s_i=
\begin{cases}
ws_i, & \ell(ws_i)>\ell(w),\\[4pt]
w, & \ell(ws_i)<\ell(w).
\end{cases}
\]
This operation is the elementary Demazure product with a simple reflection.
\end{defin}

The first crucial property of the Demazure product is that it is compatible with the Coxeter relations.

\begin{proposition}\label{prop:demazure-relations}
For every \(w\in S_n\), the elementary Demazure product satisfies the Coxeter relations:
\begin{enumerate}
    \item If \(|i-j|>1\), then
    \[
    w*s_i*s_j=w*s_j*s_i.
    \]
    \item For \(1\le i\le n-2\), one has
    \[
    w*s_i*s_{i+1}*s_i=w*s_{i+1}*s_i*s_{i+1}.
    \]
\end{enumerate}
Consequently, the product \(w*s_{i_1}*\cdots *s_{i_\ell}\) depends only on the element represented by the reduced word \(s_{i_1}\cdots s_{i_\ell}\).
\end{proposition}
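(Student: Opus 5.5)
The plan is to reduce both relations to a finite check inside a rank-two parabolic subgroup, using the parabolic (coset) decomposition of \(w\). Fix \(i,j\) with either \(|i-j|>1\) or \(j=i+1\). Let \(J=\{s_i,s_j\}\) and let \(W_J=\langle s_i,s_j\rangle\). This group is dihedral of order \(4\) in the commuting case and isomorphic to \(S_3\) (order \(6\)) in the braid case. Write
\[
w=u\,v,\qquad v\in W_J,
\]
with \(u\) the unique minimal-length element of the coset \(wW_J\). I will use the standard fact \cite{BjornerBrenti2005} that
\[
\ell(uv')=\ell(u)+\ell(v')\quad\text{for all } v'\in W_J .
\]

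\textbf{Key step.} For \(v\in W_J\) and \(s\in J\), I claim
\[
u\,v*s=u\,(v*s),
\]
where the right-hand Demazure product is computed inside \(W_J\). The argument is as follows.
\begin{itemize}
\item Since \(vs\in W_J\), length additivity gives \(\ell(uvs)=\ell(u)+\ell(vs)\) and \(\ell(uv)=\ell(u)+\ell(v)\).
\item Hence \(\ell(uvs)>\ell(uv)\) if and only if \(\ell(vs)>\ell(v)\).
\item By \cref{def:demazure-simple}, \(uv*s\) equals \(uvs\) or \(uv\) exactly according as \(v*s\) equals \(vs\) or \(v\).
\end{itemize}
Iterating this, and noting that each intermediate result stays of the form \(u\cdot(\text{element of }W_J)\), I get
\[
w*s_{a_1}*\cdots*s_{a_k}=u\,(v*s_{a_1}*\cdots*s_{a_k})\qquad\text{for any } s_{a_r}\in J.
\]

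With this reduction, both identities in \cref{prop:demazure-relations} follow from the same identities for \(v\in W_J\). That remaining check is finite.
\begin{itemize}
\item \textbf{Commuting case.} \(W_J=\{e,s_i,s_j,s_is_j\}\). For each of the four values of \(v\), the left-hand side \(v*s_i*s_j\) and the right-hand side \(v*s_j*s_i\) both equal \(s_is_j\), because Demazure-multiplying by both generators reaches the top element.
\item \textbf{Braid case.} For each of the six elements \(v\in W_J\cong S_3\), one checks that both \(v*s_i*s_{i+1}*s_i\) and \(v*s_{i+1}*s_i*s_{i+1}\) equal the longest element \(s_is_{i+1}s_i\). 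The reason is that Demazure products only increase in Bruhat order, and \(s_i*s_{i+1}*s_i\) already reaches the top element. This also settles \(v=e\), where the identity is just the braid relation.
\end{itemize}

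\textbf{Consequence.} Let \(R\) and \(R'\) be two reduced words for the same element. By Matsumoto's theorem (\cref{thm:Matsumoto}), \(R\) and \(R'\) are connected by commutation and braid moves. Each such move replaces a consecutive factor \(s_is_j\) or \(s_is_{i+1}s_i\) inside the product \(w*s_{i_1}*\cdots*s_{i_\ell}\). Applying the two identities with \(w\) replaced by the partial product to the left of that factor shows the total product is unchanged by each move. Hence the product depends only on the element represented.

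\textbf{Main obstacle.} The only nontrivial ingredient is the length-additivity property of minimal coset representatives, which I am quoting from \cite{BjornerBrenti2005}. Everything else is the finite verification in \(W_J\). If one prefers to avoid cosets, the fallback is to argue directly with descent sets of \(w\). That route becomes a messier case split in the braid case: one tracks whether \(s_i\) and \(s_{i+1}\) are right descents of \(w\), of \(ws_i\), and so on. This is precisely what the coset decomposition organizes.
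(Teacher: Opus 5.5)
Your proof is correct, but it takes a different route from the paper. The paper gives no argument of its own: it identifies the two identities as the $0$-Hecke (Demazure) form of the braid relations and cites \cite[Theorem~7.1]{Humphreys1990}.

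Unwound, that citation rests on the existence of the generic algebra with basis \(\{T_w\}\). At the $0$-Hecke specialization one has \(T_wT_s=T_{w*s}\), so the Coxeter relations among the generators \(T_s\) translate directly into the two identities. The same fact also shows that \(*\) makes \(S_n\) a monoid.

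You instead prove the identities from scratch. The factorization \(w=uv\), with \(u\) minimal in \(wW_J\), and the additivity \(\ell(uv')=\ell(u)+\ell(v')\) for \(v'\in W_J\) together show that Demazure multiplication by letters of \(J\) only affects the \(W_J\)-component. This reduces everything to a $4$-element and a $6$-element verification.

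The citation buys brevity and places \(*\) inside the Hecke-algebra framework the paper uses throughout. Your route is elementary, needs only length additivity for minimal coset representatives (plus Matsumoto for the final consequence), and makes visible that the relations are a rank-two phenomenon.

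One small caveat concerns the braid case. The slogan that Demazure products only increase in Bruhat order does not by itself force \(v*s_i*s_{i+1}*s_i=s_is_{i+1}s_i\), because \(x*s\ge x\) only yields an element above \(v\). To argue that way you would need monotonicity of \(x\mapsto x*s\) in Bruhat order, which follows from the lifting property. The explicit six-case check you also describe is what actually carries that step, and it is correct.
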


\begin{proof}
This is the standard 0-Hecke, or Demazure, form of the Coxeter braid relations; see \cite[Theorem~7.1]{Humphreys1990}.
\end{proof}

Hence the simple Demazure product extends unambiguously to a binary operation on $S_n$.

\begin{defin}\label{def:demazure-permutations}
Let \(\alpha,\beta\in S_n\). Choose a reduced expression
\[
\beta=s_{i_1}s_{i_2}\cdots s_{i_\ell}.
\]
The \emph{Demazure product} of \(\alpha\) and \(\beta\) is
\[
\alpha*\beta
=
\alpha*s_{i_1}*s_{i_2}*\cdots *s_{i_\ell}.
\]
By \cref{prop:demazure-relations}, this definition is independent of the chosen reduced expression for \(\beta\).
\end{defin}

It is often convenient to define the Demazure product directly for words.

\begin{defin}\label{def:demazure-words}
Let \(R=i_1i_2\cdots i_L\) be a word in the alphabet \(\{1,\ldots,n-1\}\). Its \emph{word Demazure product} is
\[
\Dem(R):=e*s_{i_1}*s_{i_2}*\cdots *s_{i_L}.
\]
More generally, for two words \(R_1,R_2\), we set
\[
\Dem(R_1R_2):=\Dem(\Cat(R_1,R_2)).
\]
\end{defin}

\begin{remark}\label{rem:word-demazure-dependence}
The word Demazure product depends on the word, not only on its ordinary product in \(S_n\). For example, \(\Dem(ii)=s_i\), whereas \(w(ii)=e\). However, if \(R_1\) and \(R_2\) are reduced expressions for the same permutation, then \(\Dem(R_1)=\Dem(R_2)\) by \cref{prop:demazure-relations}.
\end{remark}


\subsection{Flag varieties and Schubert varieties}\label{subsec:flag-varieties}

Let $\mathbb K$ be a field, let $G=\GL_n(\mathbb K)$, and let
$B\subset G$ be the Borel subgroup of invertible upper triangular matrices.
The complete flag variety is
\[
\mathcal F\ell_n=G/B
=
\left\{
F_\bullet:\ 0=F_0\subset F_1\subset\cdots\subset F_n=\mathbb K^n,
\ \dim F_i=i
\right\}.
\]
Let $E_\bullet$ denote the standard flag, where
\[
E_i=\langle e_1,e_2,\ldots,e_i\rangle.
\]

For $w\in S_n$, define the coordinate flag $E_\bullet^w$ by
\[
E_i^w=\langle e_{w(1)},e_{w(2)},\ldots,e_{w(i)}\rangle,
\qquad 0\le i\le n.
\]
The Schubert cell indexed by $w$ is
\[
X_w^\circ:=B\cdot E_\bullet^w\subset \mathcal F\ell_n.
\]
Equivalently, the complete flag variety admits the Bruhat decomposition
\[
\mathcal F\ell_n=\bigsqcup_{w\in S_n}X_w^\circ.
\]
The Schubert variety associated to $w$ is the closure
\[
X_w:=\overline{X_w^\circ}.
\]
Throughout this subsection, the base field is understood from context. Thus
\(X_w^\circ\) and \(X_w\) denote the Schubert cell and Schubert variety over the
current base field.

\subsection{Bott--Samelson varieties}\label{subsec:bott-samelson}

Let $G=\GL_n(\mathbb K)$, let $B\subset G$ be the Borel subgroup of invertible
upper triangular matrices, and let $E_\bullet$ be the standard flag in
$\mathbb K^n$.

\begin{defin}\label{def:bott-samelson-flag}
Let $R=i_1i_2\cdots i_L$ be a word in $\{1,\ldots,n-1\}$.
The \emph{Bott--Samelson variety} associated to $R$ is the variety
\[
\mathrm{BS}^R
=
\left\{
(F_\bullet^{(0)},F_\bullet^{(1)},\ldots,F_\bullet^{(L)})
\ \middle|\
\begin{array}{l}
F_\bullet^{(0)}=E_\bullet,\\[2pt]
F_\bullet^{(r)}\in G/B \text{ for }0\le r\le L,\\[2pt]
F_k^{(r-1)}=F_k^{(r)}
\text{ for all }k\neq i_r,\ 1\le r\le L
\end{array}
\right\}.
\]
In other words, at the $r$-th step, the two consecutive flags
$F_\bullet^{(r-1)}$ and $F_\bullet^{(r)}$ agree in every dimension except
possibly dimension $i_r$.
\end{defin}

The Bott--Samelson map is the projection to the last flag:
\begin{equation}\label{def:pi}
\pi_R:\mathrm{BS}^R\longrightarrow G/B,
\qquad
(F_\bullet^{(0)},F_\bullet^{(1)},\ldots,F_\bullet^{(L)})
\longmapsto F_\bullet^{(L)}.
\end{equation}

When $R$ is a reduced word for $w\in S_n$, the image of $\pi_R$ is the Schubert
variety $X_w\subset G/B$. In this case, $\mathrm{BS}^R$ is smooth of dimension
$\ell(w)$, and $\pi_R$ is a resolution of singularities of $X_w$
\cite{BottSamelson1958,Demazure1974,Hansen1973}.

The notation \(\mathrm{BS}^R\) always refers to the Bott--Samelson variety
over the current base field. For a locally closed subvariety \(Y\subset G/B\),
the symbol \(\pi_R^{-1}(Y)\) denotes its inverse image under \(\pi_R\).

Over a finite field $\mathbb K=\mathbb F_q$, each step in the definition of
$\mathrm{BS}^R$ has $q+1$ choices. Indeed, after fixing
\[
F_{i_r-1}^{(r)}=F_{i_r-1}^{(r-1)}
\qquad\text{and}\qquad
F_{i_r+1}^{(r)}=F_{i_r+1}^{(r-1)},
\]
the possible choices of $F_{i_r}^{(r)}$ are precisely the intermediate subspaces
\[
F_{i_r-1}^{(r-1)}
\subset F_{i_r}^{(r)}
\subset F_{i_r+1}^{(r-1)}.
\]
Equivalently, they are the lines in the two-dimensional quotient
\[
F_{i_r+1}^{(r-1)}/F_{i_r-1}^{(r-1)}.
\]
Thus the choices form a projective line $\mathbb P^1$, which has
$q+1$ points. Therefore
\[
\left|\mathrm{BS}^R\right|=(q+1)^{L(R)}.
\]

The map $\pi_R$ induces a probability distribution on $S_n$ by
\begin{equation}\label{eq:BS-probability}
\mathbb P_{R,q}(w)
=
\frac{
\left|
\pi_R^{-1}(X_w^\circ)
\right|
}{
(q+1)^{L(R)}
}.
\end{equation}

\begin{ex}\label{ex:BS-GL3}
Let $G=\GL_3$ and let $R=121$. Then $\mathrm{BS}^R$ consists of sequences of
complete flags
\[
(F_\bullet^{(0)},F_\bullet^{(1)},F_\bullet^{(2)},F_\bullet^{(3)})
\]
such that $F_\bullet^{(0)}=E_\bullet$ and
\[
F_k^{(0)}=F_k^{(1)} \quad (k\neq 1),
\]
\[
F_k^{(1)}=F_k^{(2)} \quad (k\neq 2),
\]
\[
F_k^{(2)}=F_k^{(3)} \quad (k\neq 1).
\]
Equivalently, only the $1$-dimensional subspace may change in the first step,
only the $2$-dimensional subspace may change in the second step, and only the
$1$-dimensional subspace may change in the third step.
\end{ex}
\begin{figure}[htbp]
\centering
\begin{tikzpicture}[scale=1.0, transform shape, line cap=round, line join=round]
    \node at (0,0) {$0$};

    \draw (0,0.5) -- (0,1.5);
    \node at (-0.3,2) {$F_1^{(0)} = \langle e_1 \rangle$};

    \draw (0,2.5) -- (0,3.5);
    \node at (0,4) {$F_2^{(0)} = F_2^{(1)} = \langle e_1,e_2 \rangle$};

    \draw (0,4.5) -- (0,5.5);
    \node at (0,6) {$\langle e_1,e_2,e_3 \rangle$};

    \draw (0.3,0.4) -- (1.7,1.55);
    \node at (2,2) {$F_1^{(1)} = F_1^{(2)}$};

    \draw (1.7,2.5) -- (0.4,3.5);
    \node at (3.3,4) {$F_2^{(1)} = F_2^{(2)}$};

    \draw (2.2,2.5) -- (2.8,3.5);
    \draw (2.7,4.5) -- (0.4,5.5);

    \node at (4,2) {$F_1^{(3)}$};
    \draw (0.6,0.3) -- (3.8,1.6);
    \draw (3.8,2.5) -- (3,3.5);
\end{tikzpicture}
\caption{The Bott--Samelson variety \(\mathrm{BS}^{121}\).}
\label{fig:BS121}
\end{figure}

\subsection{Graph of commutation classes \texorpdfstring{\(\widehat G(w_0)\)}{Ghat(w0)}}\label{subsec:commutation-classes}

We briefly recall the graph-theoretic structure on reduced words of the longest element.
The discussion in this subsection is adapted from \cite{GutierresMamedeSantos2020}.

We recall the relations:
\[
s_is_j=s_js_i \qquad (|i-j|>1)
\]
\emph{commutation moves}, and the relations
\[
s_is_{i+1}s_i=s_{i+1}s_is_{i+1}
\]
\emph{braid moves}.

Fix a permutation $w \in S_n$, we now define an equivalence relation on $\operatorname{Red}(w)$. For $R,R'\in \operatorname{Red}(w)$, write
\[
R\sim R'
\]
if $R$ can be obtained from $R'$ by a finite sequence of commutation moves.
This is an equivalence relation, and its equivalence classes are called the
\emph{commutation classes} of $w$.
We denote the set of commutation classes by
\[
\mathcal C(w),
\]
and for $R\in \operatorname{Red}(w)$ we write $[R]$ for the commutation class containing $R$.

For commutation classes \([R],[R']\in\mathcal C(w)\), we write
 $[R]\text{---}[R']$
if there exist representatives $\widetilde{R}\in [R]$ and $\widetilde{R}'\in [R']$ such that
\[
\widetilde{R}\sim_B \widetilde{R}'.
\]

\begin{lemma}[\cite{GutierresMamedeSantos2020}, Lemma 1]
\label{lem:commutation-characterization}
Let $R$ and $R'$ be two words over the alphabet $[n-1]$. Then
\[
R \sim R'
\quad\Longleftrightarrow\quad
R|_{\{i,i+1\}} = R'|_{\{i,i+1\}}
\text{ for all } i \in [n-2].
\]
\end{lemma}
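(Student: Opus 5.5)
The statement is the Gutierres--Mamede--Santos characterization: two words are commutation equivalent if and only if all their restrictions to adjacent pairs \(\{i,i+1\}\) coincide. My plan is to prove the two implications separately. The forward direction is a direct check on single moves, and the reverse direction is an induction on the length of the words.

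\emph{Forward direction.} It suffices to treat a single commutation move \(T_1\,ab\,T_2\mapsto T_1\,ba\,T_2\) with \(|a-b|>1\). Fix \(i\in[n-2]\). Since \(\{a,b\}\) contains no two consecutive integers, at most one of \(a,b\) lies in \(\{i,i+1\}\). Hence the subsequence of letters lying in \(\{i,i+1\}\) is unchanged by the swap, so \(R|_{\{i,i+1\}}=R'|_{\{i,i+1\}}\). Induction on the number of moves finishes this direction.

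\emph{Reverse direction.} I induct on \(L(R)\). First, the restrictions determine the content. Every letter \(j\in[n-1]\) lies in some pair \(\{i,i+1\}\) when \(n\ge 3\), so the multiplicity of \(j\) in \(R\) can be read off from a restriction. (The case \(n\le 2\) is trivial: there is at most one letter, and an empty set of conditions means equal words when content is fixed by length.) In particular \(L(R)=L(R')\). Let \(a\) be the first letter of \(R\). I claim that \(R'\) can be brought to a word starting with \(a\) by commutation moves.

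Let \(p\) be the position of the first occurrence of \(a\) in \(R'\); it exists by equality of content. I will show that no letter \(b\in\{a-1,a+1\}\) occurs in \(R'\) before position \(p\). Suppose \(b\) occurs there. Then \(R'|_{\{a,b\}}\) (the restriction to the appropriate adjacent pair) begins with \(b\), while \(R|_{\{a,b\}}\) begins with \(a\), a contradiction. So every letter before position \(p\) differs from \(a\) by more than \(1\). Moving \(a\) leftward past each of them uses only commutation moves, giving \(R'\sim aR''\).

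Now write \(R=aS\). By the forward direction, \(aR''\) has the same adjacent-pair restrictions as \(R'\), hence as \(R\). Deleting the leading \(a\) removes the first letter from exactly those restrictions containing \(a\). Therefore \(S|_{\{i,i+1\}}=R''|_{\{i,i+1\}}\) for all \(i\). By induction \(S\sim R''\), so \(R=aS\sim aR''\sim R'\).

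\emph{Main obstacle.} The key step is the claim that \(a\) can be commuted to the front of \(R'\). It hinges on noticing that any obstruction to the commutation would be a neighbor letter \(a\pm1\), and such a letter would be visible in the corresponding pair restriction. The remaining bookkeeping is the edge case of small \(n\), and the observation that equal restrictions force equal content, which is needed to guarantee that \(a\) actually occurs in \(R'\).
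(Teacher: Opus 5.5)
The paper gives no proof of this lemma; it is quoted from Gutierres--Mamede--Santos, so there is no internal argument to compare yours with. Your proof is correct for $n\ge 3$. It is the standard projection argument for partially commutative (trace) monoids. The forward direction checks that a single commutation leaves every pair restriction unchanged. For the converse, the first letter $a$ of $R$ can be commuted to the front of $R'$, because any blocking letter before the first $a$ would have to be $a\pm1$. Such a letter would make $R'|_{\{a,a\pm1\}}$ start differently from $R|_{\{a,a\pm1\}}$. Induction on length then finishes. All the individual steps check out, including the deduction of equal content from the restrictions when $n\ge 3$, and the peeling of the leading $a$ from exactly the restrictions that contain it.

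Your handling of small $n$, however, is wrong as written. For $n=2$ the index set $[n-2]$ is empty, so the right-hand side is vacuous and does not force equal content. The statement as literally written is then false for arbitrary words: $R=1$ and $R'=11$ satisfy the vacuous condition but are not commutation equivalent, since commutation moves preserve length. Your parenthetical ``an empty set of conditions means equal words when content is fixed by length'' silently assumes $L(R)=L(R')$, which does not follow from the hypotheses. This does no harm in the paper's setting, because $\sim$ is defined only on $\Red(w)$ and every later use compares words of equal length. You should either add $L(R)=L(R')$ as a hypothesis or restrict to $n\ge 3$, where your content argument genuinely derives it, rather than calling the case trivial.
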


Let $w_0\in S_n$ be the longest element. In one-line notation,
\[
w_0=n(n-1)\cdots 321,
\]
and
\[
\ell(w_0)=\binom{n}{2}.
\]
A standard reduced word for $w_0$ is
\[
R_0=1(21)(321)\cdots ((n-1)\cdots 21).
\]

We can now define the graph of commutation classes.
Its vertex set is \(\mathcal C(w_0)\), and two vertices $[R]$ and $[R']$ are joined by an edge
whenever
\[
[R]\text{---} [R'].
\]
Following \cite{GutierresMamedeSantos2020}, we denote this graph by $\widehat{G}(w_0)$.

This graph may be viewed as a quotient of a larger graph $G(w_0)$. 
Namely, $G(w_0)$ is the graph whose vertex set is $\operatorname{Red}(w_0)$, and where two reduced
words are adjacent if they differ by a single commutation move or a single braid move.
Thus $\widehat{G}(w_0)$ is obtained from $G(w_0)$ by contracting each commutation class
to a single vertex. In particular, by \cref{thm:Matsumoto} we can see that $G(w_0)$ is connected.

We now recall some invariants which are crucial for the structure of the graph. The first one is the sum of digits of a word:

\begin{defin}\label{def:digit-sum}
For a reduced word
\[
R=i_1i_2\cdots i_{\ell}\in \Red(w_0),
\]
define its \emph{digit sum} by
\[
S(R):=\sum_{j=1}^{\ell} i_j.
\]

\end{defin}

\begin{defin}
For \(R=i_1i_2\cdots i_{\ell}\in \Red(w_0)\), define its \emph{complement} and \emph{reverse} by
\[
R^{\bullet}:=(n-i_1)(n-i_2)\cdots (n-i_{\ell}),
\qquad
R^{\mathrm{Rev}}:=i_{\ell}\cdots i_2i_1.
\]
These are involutions on \(\Red(w_0)\), and it is easy to check that they commute with each other:
\[
(R^{\bullet})^{\mathrm{Rev}}=(R^{\mathrm{Rev}})^{\bullet}.
\]
\end{defin}

\begin{lemma}
The digit sum \(S(R)\) is constant on each commutation class of \(w_0\). Hence \(S\) defines a map on the vertex set of \(\widehat{G}(w_0)\).
\end{lemma}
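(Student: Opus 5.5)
The plan is to show that the digit sum is unchanged by a single commutation move. The invariance on a whole commutation class then follows by induction on the number of moves. By definition, \(R\sim R'\) means that \(R'\) is obtained from \(R\) by a finite sequence of commutation moves. Each intermediate word is again a reduced word of \(w_0\), since a commutation move preserves both the represented element and the length. So it suffices to treat \(R\sim_C R'\).

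Suppose \(R=T_1\,ij\,T_2\) and \(R'=T_1\,ji\,T_2\) with \(|i-j|>1\). The multiset of letters of \(R'\) is exactly that of \(R\): the two adjacent letters \(i,j\) are merely swapped, and every other letter stays in place. Since \(S\) is the sum of the letters, it depends only on this multiset. Hence \(S(R)=S(T_1)+i+j+S(T_2)=S(R')\). Inducting along the chain of commutation moves gives \(S(R)=S(R')\) whenever \([R]=[R']\).

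For the second assertion, the vertices of \(\widehat G(w_0)\) are the classes \([R]\in\mathcal C(w_0)\). Setting \(S([R]):=S(R)\) is well defined precisely because of the invariance just shown, so \(S\) descends to a map on the vertex set.

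Alternatively, \cref{lem:commutation-characterization} gives a quicker route. Commutation-equivalent words have equal restrictions to each pair \(\{i,i+1\}\), so each letter \(k\) occurs the same number of times in both words. The sum \(S=\sum_k k\cdot\#\{\text{occurrences of }k\}\) is therefore equal. One caveat: if \(n=2\) there are no pairs \(\{i,i+1\}\) with \(i\in[n-2]\), so this route needs a separate remark, but then \(\Red(w_0)\) is a single word and the claim is trivial.

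There is no genuine obstacle here. The only point requiring care is noting that intermediate words remain in \(\Red(w_0)\), so that \(S\) is defined on them, and this holds because commutation moves preserve both length and the represented element.
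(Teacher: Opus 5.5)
Your proof is correct and is essentially the paper's argument. A commutation move only swaps two adjacent letters, so the multiset of letters, and hence the digit sum, is unchanged; chaining moves gives invariance on each class, and the alternative route via \cref{lem:commutation-characterization} is valid but not needed.
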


\begin{proof}
A commutation move only interchanges two adjacent letters \(i\) and \(j\), so it does not change the sum of the letters. Therefore \(S(R)\) depends only on the commutation class of \(R\).
\end{proof}

\begin{lemma}\label{lem:digit-sum-edge}
If two reduced words \(R,R'\in \Red(w_0)\) differ by a single braid move, then
\[
|S(R)-S(R')|=1.
\]
Consequently, if two commutation classes are joined by an edge in \(\widehat{G}(w_0)\), then the absolute difference of their digit sums is \(1\).
\end{lemma}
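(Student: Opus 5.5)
Since the statement concerns a single braid move, I will localize the computation to the three letters where $R$ and $R'$ differ. By definition of $\sim_B$, there are words $T_1,T_2$ and an index $k$ with $1\le k<n-1$ such that
\[
R=T_1\,k\,(k+1)\,k\,T_2,\qquad R'=T_1\,(k+1)\,k\,(k+1)\,T_2,
\]
possibly with the roles of $R$ and $R'$ swapped. The letters occupying positions outside the window $\{t,t+1,t+2\}$ agree in $R$ and $R'$, so they contribute identically to $S(R)$ and $S(R')$. Hence
\[
S(R)-S(R')=\bigl(k+(k+1)+k\bigr)-\bigl((k+1)+k+(k+1)\bigr)=3k+1-(3k+2)=-1,
\]
and in the swapped case the difference is $+1$. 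Either way $|S(R)-S(R')|=1$. This step uses no property of $w_0$ beyond the fact that a braid move is a local substitution of three letters.

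For the consequence, suppose $[R]\text{---}[R']$ is an edge of $\widehat G(w_0)$. By definition of the edge relation, there are representatives $\widetilde R\in[R]$ and $\widetilde R'\in[R']$ with $\widetilde R\sim_B\widetilde R'$. The first part gives $|S(\widetilde R)-S(\widetilde R')|=1$. By the preceding lemma, $S$ is constant on commutation classes, so $S(\widetilde R)=S(R)$ and $S(\widetilde R')=S(R')$. Therefore $|S(R)-S(R')|=1$, which shows the digit-sum difference across an edge is well defined and equals $1$.

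No step here is a real obstacle; the only point needing care is that the edge relation is defined through representatives. That is exactly why we need the earlier invariance of $S$ under commutation moves to pass from the chosen representatives back to the classes.
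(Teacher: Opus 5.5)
Your proof is correct and is essentially the paper's argument: the digit sums agree outside the three-letter braid window, and inside it they differ by $(k+(k+1)+k)-((k+1)+k+(k+1))=-1$. For the consequence about edges of $\widehat{G}(w_0)$, you spell out the step the paper leaves implicit, namely using the invariance of $S$ under commutation moves to pass from the chosen representatives back to the commutation classes.
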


\begin{proof}
A braid move replaces a factor \(i(i+1)i\) by \((i+1)i(i+1)\), or vice versa. The corresponding digit sums differ by
\[
\bigl((i+1)+i+(i+1)\bigr)-\bigl(i+(i+1)+i\bigr)=1.
\]
Hence the absolute difference is \(1\).
\end{proof}

The second one plays a key role in our following proof of the main theorem, which also gives a characterization of the commutation classes.


\begin{defin}
Let
\[
\mathcal{T}_n:=\{(a,b,c)\in [n]^3:\ a<b<c\}.
\]
For a reduced word \(R\in \operatorname{Red}(w_0)\), say $R=i_1i_2\cdots i_N$, where $N=\binom{n}{2}$. We see $R$ as an operation sequence: starting from the identity permutation, apply $s_{i_1},\dots,s_{i_N}$ successively.
For $a<b$, let $\tau_R(a,b)\in\{1,\dots,N\}$ be the unique time at which the pair $(a,b)$ becomes an inversion for the first time.
\end{defin}

Note that for \(R\in \operatorname{Red}(w_0)\), every pair $a<b$ inverts exactly once, so $\tau_R$ is well-defined.

\begin{defin}
    For $(a,b,c)\in \mathcal{T}_n$, define
    \[
    T_R(a,b,c)=
    \begin{cases}
     1, & \text{if }\tau_R(a,b)<\tau_R(b,c),\\
    -1, & \text{otherwise.}
    \end{cases}
    \]   
\end{defin}

This invariant (in equivalent guises) appears in 
\cite[\textsection2, Lemma~10]{GutierresMamedeSantos2020}.
We will mainly use two of its properties presented in \cite{GutierresMamedeSantos2020}.
For the first property, we give a self-contained proof based on the following linear-extension lemma, in a form tailored to the notation used here.

\begin{lemma}[Linear-extension lemma {\cite[Lemma~1]{Eti84}}]\label{lem:linear-extension}
Let $(X,\prec)$ be a finite poset. A \emph{linear extension} of $(X,\prec)$ is a total order (equivalently, a listing
$x_1,\dots,x_{|X|}$ of the elements of $X$) such that $x\prec y$ implies $x$ appears before $y$.
Then any two linear extensions of $(X,\prec)$ are connected by a sequence of swaps of adjacent
\emph{incomparable} elements.
\end{lemma}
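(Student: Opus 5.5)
Any two linear extensions of a finite poset can be joined by adjacent swaps of incomparable elements. I will prove this by induction on $|X|$, moving the first element of one extension to the front of the other.

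Let $x_1,\dots,x_m$ and $y_1,\dots,y_m$ be two linear extensions of $(X,\prec)$. The element $y_1$ is minimal in $X$, since anything below it would have to appear before it in the second listing. Say $y_1=x_k$ in the first listing. For every $j<k$ we cannot have $x_j\succ x_k$, because $x_j$ appears earlier in a linear extension. We also cannot have $x_j\prec x_k$, because $x_k$ is minimal. Hence $x_k$ is incomparable to each of $x_1,\dots,x_{k-1}$.

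We therefore move $x_k$ leftward by $k-1$ adjacent swaps, each exchanging two incomparable elements. Each intermediate listing is still a linear extension: the only relative order that changes is between incomparable elements, so no relation is violated. This produces the linear extension $x_k,x_1,\dots,x_{k-1},x_{k+1},\dots,x_m$, whose first entry agrees with $y_1$.

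Now remove $y_1$ from $X$. The remaining tails, $x_1,\dots,\widehat{x_k},\dots,x_m$ and $y_2,\dots,y_m$, are both linear extensions of the induced subposet $X\setminus\{y_1\}$. By induction they are connected by adjacent swaps of incomparable elements. Prepending $y_1$ to every listing in that sequence keeps each one a linear extension of $X$, and the swaps remain swaps of incomparable adjacent elements in $X$. The base case $|X|\le 1$ is trivial.

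The only point needing care is the incomparability of $x_k$ with all earlier entries, which is where minimality is used. Everything else is routine bookkeeping.
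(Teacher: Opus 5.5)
Your proof is correct and complete. Minimality of $y_1$, together with the linear-extension property of the first listing, forces $y_1=x_k$ to be incomparable with each of $x_1,\dots,x_{k-1}$. The $k-1$ adjacent swaps keep every intermediate listing a linear extension. Prepending the minimal element $y_1$ to the inductively obtained sequence for $X\setminus\{y_1\}$ preserves adjacency, incomparability, and the linear-extension property.

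The paper gives no proof of this lemma. It states it and refers to \cite{Eti84}, so there is no competing argument to compare with. Your induction is the standard argument, and including it would make the proof of \cref{prop:commutation-classes-via-T} self-contained.

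An equally short alternative inducts on the number of pairs that appear in opposite orders in the two listings. If the listings differ, some adjacent pair of the first listing is reversed in the second. Such a pair must be incomparable, and swapping it lowers the count by one. That version makes it visible that the number of swaps equals the number of discordant pairs. Your front-loading procedure achieves the same count, since each of your $k-1$ swaps corrects exactly one discordant pair and creates none.
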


Here we state the lemma without proof. One can see \cite{Eti84} for the proof.

\begin{proposition}[{\cite[Lemma~10]{GutierresMamedeSantos2020}}]\label{prop:commutation-classes-via-T}
   For $R_1,R_2\in \operatorname{Red}(w_0)$, one has $[R_1]=[R_2]$ if and only if
    \[
    T_{R_1}(a,b,c)=T_{R_2}(a,b,c)\quad\text{for all }(a,b,c)\in \mathcal{T}_n.
    \] 
\end{proposition}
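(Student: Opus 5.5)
We prove both directions by translating reduced words of \(w_0\) into linear extensions of a fixed poset on the set of \emph{inversion events}. The events are indexed by pairs \(a<b\).

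\textbf{Forward direction.} If \([R_1]=[R_2]\), a single commutation move swaps two adjacent letters \(i,j\) with \(|i-j|>1\). These letters act on disjoint positions, so they create inversions of pairs with disjoint supports. Hence the swap interchanges the values \(\tau_R(a,b)\) and \(\tau_R(c,d)\) for two pairs with \(\{a,b\}\cap\{c,d\}=\emptyset\), and changes no other value of \(\tau_R\). In \(T_R(a,b,c)\) only pairs sharing the element \(b\) are compared, so every \(T_R\) is unchanged. Induction along a chain of commutation moves gives the claim.

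\textbf{Backward direction: setup.} Record \(R\) as the sequence of pairs \(p_t=(a,b)\) inverted at time \(t\), so \(\tau_R(p_t)=t\). The word \(R\) is recovered from this sequence: the letter \(i_t\) is the position at which the pair \(p_t\) currently sits, and that position is determined by the previous steps. So \(R\) is determined by the total order \(\tau_R\) on pairs. Impose the relation \((a,b)\prec(a',b')\) whenever the two pairs share an element and \(\tau_R(a,b)<\tau_R(a',b')\).

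\textbf{Backward direction: the triple relations.} Within a triple \(a<b<c\), the inversion time of \((a,c)\) always lies strictly between those of \((a,b)\) and \((b,c)\). Indeed, \(a\) and \(c\) cannot become adjacent while \(b\) separates them in the right way. Hence all relative orders among pairs sharing an element are determined by the sign \(T_R(a,b,c)\). If \(T_{R_1}=T_{R_2}\), the two total orders \(\tau_{R_1},\tau_{R_2}\) agree on all pairs that share an element. Let \(P\) be the transitive closure of this common relation; it is a partial order because both \(\tau_{R_1}\) and \(\tau_{R_2}\) are linear extensions of it.

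\textbf{Backward direction: connecting the extensions.} By \cref{lem:linear-extension}, \(\tau_{R_1}\) and \(\tau_{R_2}\) are connected by swaps of adjacent incomparable elements. I need to check three things along such a path:
\begin{enumerate}
\item Each intermediate linear extension comes from a reduced word of \(w_0\).
\item Each swap corresponds to a commutation move.
\item The swapped pairs are disjoint, which holds since comparable includes every pair of events sharing an element.
\end{enumerate}
For item 3 and item 2: if two consecutive events involve disjoint pairs, then at that moment they sit in disjoint adjacent positions. Their letters therefore differ by at least \(2\) and commute, so swapping them is a commutation move giving another reduced word.

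\textbf{Expected obstacle.} The main difficulty is item 1, showing that every linear extension of \(P\) is realized by a reduced word. The swap argument handles this inductively: each swap yields another valid reduced word from a valid one, so validity propagates along the path.
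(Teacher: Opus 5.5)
Your proposal is correct and follows the paper's proof essentially step for step. Both arguments build the poset on inversion pairs from the triple orientations, use the fact that $\tau_R(a,c)$ lies between $\tau_R(a,b)$ and $\tau_R(b,c)$ for acyclicity, apply the linear-extension lemma, observe that incomparable pairs are disjoint, and lift each adjacent swap of disjoint events to a commutation move. Your explicit remark that the event sequence $E(R)$ determines $R$ fills in a point the paper leaves implicit when it concludes that the lifted chain of commutation moves ends at $R_2$.
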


\begin{proof}

For the ``only if'' direction, observe that commutation moves do not change $T_R(a,b,c)$ for any triple, thus $T_R$ depends only on $[R]$, and we may write $T_{[R]}(a,b,c)$ unambiguously.

For the converse, assume $T_{R_1}(a,b,c)=T_{R_2}(a,b,c)$ for all $(a,b,c)\in \mathcal{T}_n$, and write the common value as $T(a,b,c)$. Let
\[
P:=\{(a,b):1\le a<b\le n\}.
\]
Define a directed relation $R_T$ on $P$ by imposing, for every $a<b<c$,
\[
\begin{cases}
(a,b)\to(a,c)\to(b,c), & \text{if }T(a,b,c)=1,\\
(b,c)\to(a,c)\to(a,b), & \text{if }T(a,b,c)=-1.
\end{cases}
\]
Let $\prec_T$ be the transitive closure of $R_T$.

\smallskip
\noindent\emph{Claim 1:} $(P,\prec_T)$ is a strict poset.
Fix any \(R\in \Red(w_0)\) with $T_R\equiv T$ (e.g.\ $R=R_1$). For each $a<b<c$, the restriction to the three values
$a,b,c$ yields a reduced expression for the longest element in $S_3$, so necessarily $\tau_R(a,c)$ lies strictly between
$\tau_R(a,b)$ and $\tau_R(b,c)$. Hence every edge $p\to q$ in $R_T$ satisfies $\tau_R(p)<\tau_R(q)$, so $R_T$ has no directed cycle.
Therefore its transitive closure $\prec_T$ is transitive and irreflexive.

\smallskip
\noindent\emph{Claim 2:} The event sequences $E(R_1)$ and $E(R_2)$ are linear extensions of $(P,\prec_T)$.
Define $E(R)$ to be the list of all pairs in $P$ in increasing order of $\tau_R$.
Since each edge in $R_T$ respects $\tau_{R_1}$ (and $\tau_{R_2}$), both $E(R_1)$ and $E(R_2)$ respect $\prec_T$.
By lemma~\ref{lem:linear-extension},
there exists a sequence of adjacent swaps of \emph{incomparable} elements transforming $E(R_1)$ into $E(R_2)$.

\smallskip
\noindent\emph{Claim 3:} If $p=(a,b)$ and $q=(c,d)$ are incomparable in $(P,\prec_T)$, then $\{a,b\}\cap\{c,d\}=\varnothing$.
Indeed, if $p$ and $q$ share an entry, say $p=(a,b)$ and $q=(a,c)$ with $b<c$, then the triple $a<b<c$ forces
$(a,b)\prec_T(a,c)$ or $(a,c)\prec_T(a,b)$ by construction, contradicting incomparability. The other shared-entry cases are analogous.

\smallskip
Now interpret one adjacent swap in the event sequence. At time $t$, the generator $s_{i_t}$ swaps the two adjacent entries
currently occupying positions $i_t,i_t+1$; denote those values by $u_t,v_t$. The unique new inversion created at time $t$ is precisely
the pair $\left(\min\{u_t,v_t\},\max\{u_t,v_t\}\right)$, i.e.\ the $t$-th element of $E(R)$.
If two consecutive events are disjoint pairs of values, then the corresponding swaps occur on disjoint adjacent blocks, hence the two
simple reflections commute (their indices differ by $>1$). By Claim~3, every adjacent swap used to pass from $E(R_1)$ to $E(R_2)$ is of this form.
Therefore the entire swap sequence lifts to a sequence of commutation moves from $R_1$ to $R_2$, i.e.\ $[R_1]=[R_2]$.
\end{proof}

\begin{proposition}[{\cite[Lemma~11]{GutierresMamedeSantos2020}}]\label{prop:braid-flips-one-T}
Let $R_1,R_2\in \operatorname{Red}(w_0)$ differ by a single braid move
\[
\cdots\, i\, (i+1)\, i\, \cdots \quad\longleftrightarrow\quad \cdots\, (i+1)\, i\, (i+1)\, \cdots .
\]
Then $T_{R_1}(a,b,c)=T_{R_2}(a,b,c)$ for all triples $(a,b,c)\in \mathcal{T}_n$, except for one.
\end{proposition}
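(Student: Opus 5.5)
The plan is to localize the braid move, track exactly which inversion times $\tau$ change, and then check that exactly one triple has its sign $T$ flipped. Write
\[
R_1=A\,i\,(i+1)\,i\,B,\qquad R_2=A\,(i+1)\,i\,(i+1)\,B,
\]
where $A$ has length $t-1$, so the braid occupies times $t,t+1,t+2$.

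\textbf{Step 1: the braid acts on three fixed values.} After applying $A$, both words reach the same permutation $u$. Let $x,y,z$ be the values of $u$ in positions $i,i+1,i+2$. Since the word is reduced, each step creates a new inversion. The three braid steps therefore create the three inversions among $\{x,y,z\}$, so $x<y<z$. Write $a=x$, $b=y$, $c=z$.

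\textbf{Step 2: only three inversion times change.} After the braid, both words reach the same permutation $u\,s_is_{i+1}s_i$, and they share the suffix $B$. Hence $\tau_{R_1}(p)=\tau_{R_2}(p)$ for every pair $p\notin\{(a,b),(a,c),(b,c)\}$. For the three exceptional pairs, a direct computation gives:
\begin{itemize}
\item in $R_1$ (letters $i,i+1,i$): $(a,b)$ inverts at $t$, $(a,c)$ at $t+1$, $(b,c)$ at $t+2$;
\item in $R_2$ (letters $i+1,i,i+1$): $(b,c)$ inverts first, then $(a,c)$, then $(a,b)$.
\end{itemize}
So the relative order of these three events is reversed, and all three occupy times in $\{t,t+1,t+2\}$.

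\textbf{Step 3: which triples flip.} Take any triple $(a',b',c')$. The value $T(a',b',c')$ compares $\tau(a',b')$ with $\tau(b',c')$.
\begin{itemize}
\item If neither pair lies in $\{(a,b),(a,c),(b,c)\}$, both times are unchanged, so $T$ is unchanged.
\item If exactly one of the two pairs is exceptional, say $\tau(a',b')\in\{t,t+1,t+2\}$, then the other pair's time lies outside $\{t,t+1,t+2\}$ in both words. It is either before $t$ or after $t+2$, so the comparison is the same in both words. Hence $T$ is unchanged.
\item If both pairs are exceptional, then $\{a',b'\}\cup\{b',c'\}\subseteq\{a,b,c\}$, which forces $(a',b',c')=(a,b,c)$. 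In this case $T_{R_1}(a,b,c)=1$ and $T_{R_2}(a,b,c)=-1$, so $T$ flips.
\end{itemize}
Therefore exactly one triple, namely $(a,b,c)$, has a different $T$ value.

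\textbf{Main obstacle.} The only delicate point is the claim in Step 2 that the unchanged pairs keep identical inversion times. This holds because both words pass through the same permutation $u$ at time $t-1$ and the same permutation at time $t+2$, with identical letters outside the braid window.
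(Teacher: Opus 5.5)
Your proof is correct and complete. The paper gives no proof of its own here: it cites \cite[Lemma~11]{GutierresMamedeSantos2020} and omits the argument. Your locality argument therefore supplies a self-contained proof, and it runs as follows. The common prefix and the common suffix create the same inversions at the same times in both words. The braid window creates exactly the three inversions among $\{a,b,c\}$, in reversed order. A triple's sign can change only if both pairs it compares lie in the window, and this forces the triple to be $(a,b,c)$.

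You also prove slightly more than the statement as worded. You identify the flipped triple as $(u(i),u(i+1),u(i+2))$ with $u=w(A)$. You also show that $T(a,b,c)=+1$ holds automatically on the side $A\,i\,(i+1)\,i\,B$. This sharper form is what the paper actually uses later, in Proposition~\ref{prop:theta-braid-variation} and Corollary~\ref{cor:theta-T-invariant-under-braid}. There the ``orientation'' hypothesis $T_{R_1}(a,b,c)=+1$ turns out to hold automatically rather than being an extra assumption.

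The one step worth writing out more explicitly is Step~1. The conclusion $x<y<z$ holds because the three swaps of $i\,(i+1)\,i$ exchange $(x,y)$, then $(x,z)$, then $(y,z)$. Since length must increase at each step, the smaller value of each exchanged pair sits on the left.
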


A proof can be found in \cite{GutierresMamedeSantos2020}, hence we omit it.

\begin{ex}\label{ex:braid-one-triple}
Let \(n=4\), so \(w_0=4321\), and
\[
\mathcal{T}_n=\{(1,2,3),(1,2,4),(1,3,4),(2,3,4)\}.
\]
Consider the two reduced words
\[
R=121321,\qquad R'=212321.
\]
They differ by the braid move \(121\leftrightarrow 212\) at the beginning.

\begin{center}
\begin{tikzpicture}[
    x=0.56cm,
    y=0.56cm,
    wire/.style={
        line width=1.05pt,
        line cap=round,
        line join=round,
        preaction={draw=white,line width=2.8pt}
    },
    gridline/.style={gray!25, thin},
    braidbox/.style={gray!60, dashed, rounded corners, thin}
]

\begin{scope}
\node at (3,4.05) {\(R=121321\)};

\foreach \x in {0,...,6} {
    \draw[gridline] (\x,-0.25) -- (\x,3.25);
}
\foreach \y in {0,...,3} {
    \draw[gridline] (-0.15,\y) -- (6.15,\y);
}

\draw[braidbox] (0.55,0.72) rectangle (3.45,3.28);
\node[gray!70] at (2,3.48) {\scriptsize \(121\)};

\foreach \lab/\y in {1/3,2/2,3/1,4/0} {
    \node[left] at (-0.2,\y) {\scriptsize \(\lab\)};
}
\foreach \lab/\y in {4/3,3/2,2/1,1/0} {
    \node[right] at (6.2,\y) {\scriptsize \(\lab\)};
}

\draw[wire, red!75!black]
    (0,3) -- (1,2) -- (2,1) -- (3,1) -- (4,0) -- (5,0) -- (6,0);
\draw[wire, blue!75!black]
    (0,2) -- (1,3) -- (2,3) -- (3,2) -- (4,2) -- (5,1) -- (6,1);
\draw[wire, green!45!black, densely dashed]
    (0,1) -- (1,1) -- (2,2) -- (3,3) -- (4,3) -- (5,3) -- (6,2);
\draw[wire, orange!85!black]
    (0,0) -- (1,0) -- (2,0) -- (3,0) -- (4,1) -- (5,2) -- (6,3);

\node[below] at (3,-0.55)
{\scriptsize \((1,2),(1,3),(2,3),(1,4),(2,4),(3,4)\)};
\end{scope}

\begin{scope}[xshift=5cm]
\node at (3,4.05) {\(R'=212321\)};

\foreach \x in {0,...,6} {
    \draw[gridline] (\x,-0.25) -- (\x,3.25);
}
\foreach \y in {0,...,3} {
    \draw[gridline] (-0.15,\y) -- (6.15,\y);
}

\draw[braidbox] (0.55,0.72) rectangle (3.45,3.28);
\node[gray!70] at (2,3.48) {\scriptsize \(212\)};

\foreach \lab/\y in {1/3,2/2,3/1,4/0} {
    \node[left] at (-0.2,\y) {\scriptsize \(\lab\)};
}
\foreach \lab/\y in {4/3,3/2,2/1,1/0} {
    \node[right] at (6.2,\y) {\scriptsize \(\lab\)};
}

\draw[wire, red!75!black]
    (0,3) -- (1,3) -- (2,2) -- (3,1) -- (4,0) -- (5,0) -- (6,0);
\draw[wire, blue!75!black]
    (0,2) -- (1,1) -- (2,1) -- (3,2) -- (4,2) -- (5,1) -- (6,1);
\draw[wire, green!45!black, densely dashed]
    (0,1) -- (1,2) -- (2,3) -- (3,3) -- (4,3) -- (5,3) -- (6,2);
\draw[wire, orange!85!black]
    (0,0) -- (1,0) -- (2,0) -- (3,0) -- (4,1) -- (5,2) -- (6,3);

\node[below] at (3,-0.55)
{\scriptsize \((2,3),(1,3),(1,2),(1,4),(2,4),(3,4)\)};
\end{scope}

\end{tikzpicture}

\medskip
{\small
The wiring diagrams for \(R=121321\) and \(R'=212321\).
The dashed boxes mark the braid move \(121\leftrightarrow 212\).
The sequence below each diagram records the newly created value-inversion pairs.
}
\end{center}

For \(R\), the sequence of newly created value-inversion pairs is
\[
(1,2),\ (1,3),\ (2,3),\ (1,4),\ (2,4),\ (3,4),
\]
whereas for \(R'\), it is
\[
(2,3),\ (1,3),\ (1,2),\ (1,4),\ (2,4),\ (3,4).
\]
Hence the corresponding \(\tau\)-values are
\[
\begin{array}{c|cccccc}
        & (1,2) & (1,3) & (2,3) & (1,4) & (2,4) & (3,4)\\
\hline
\tau_R  & 1 & 2 & 3 & 4 & 5 & 6\\
\tau_{R'} & 3 & 2 & 1 & 4 & 5 & 6
\end{array}
\]

We now evaluate \(T_R(a,b,c)\) and \(T_{R'}(a,b,c)\) for all triples in \(\mathcal{T}_n\):
\[
\begin{array}{c|ccc}
(a,b,c) & T_R(a,b,c) & T_{R'}(a,b,c) & \text{different?}\\
\hline
(1,2,3) & +1 & -1 & \ast\\
(1,2,4) & +1 & +1 & \\
(1,3,4) & +1 & +1 & \\
(2,3,4) & +1 & +1 &
\end{array}
\]
Indeed, \(R\) and \(R'\) differ by exactly one braid move \(121\leftrightarrow 212\), and
\[
T_R(a,b,c)\neq T_{R'}(a,b,c)
\]
only for the triple \((1,2,3)\), namely the unique triple supported on the moved block
\(\{1,2,3\}\). This illustrates Proposition~\ref{prop:commutation-classes-via-T}
and Proposition~\ref{prop:braid-flips-one-T}.
\end{ex}

\begin{cor}\label{cor:hypercube-embedding}
The assignment
\[
[R]\longmapsto \bigl(T_R(a,b,c))\bigr)_{(a,b,c)\in \mathcal{T}_n}
\]
is well-defined and injective on the set of commutation classes of \(w_0\). In particular, after identifying \(\{-1,1\}\) with \(\mathbb Z_2\), the graph \(\widehat{G}(w_0)\) embeds into the hypercube
$\mathbb Z_2^{\binom{n}{3}}.$

\end{cor}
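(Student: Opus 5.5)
The statement to prove is \cref{cor:hypercube-embedding}. It should follow almost formally from \cref{prop:commutation-classes-via-T} and \cref{prop:braid-flips-one-T}.

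\emph{Well-definedness and injectivity.} For well-definedness, the ``only if'' direction of \cref{prop:commutation-classes-via-T} shows that \(T_R\) is unchanged by commutation moves. Hence the vector \(\bigl(T_R(a,b,c)\bigr)_{(a,b,c)\in\mathcal T_n}\) depends only on \([R]\). For injectivity, the ``if'' direction applies: if two classes \([R_1]\) and \([R_2]\) have the same vector, then \([R_1]=[R_2]\). So the map \(\Phi:\mathcal C(w_0)\to\{\pm1\}^{\mathcal T_n}\cong\mathbb Z_2^{\binom n3}\) is injective on vertices, using \(|\mathcal T_n|=\binom n3\).

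\emph{Edges go to hypercube edges.} Next I check that \(\Phi\) sends edges of \(\widehat G(w_0)\) to edges of the hypercube, meaning vectors that differ in exactly one coordinate. Suppose \([R]\text{---}[R']\). Then there are representatives \(\widetilde R\in[R]\) and \(\widetilde R'\in[R']\) with \(\widetilde R\sim_B\widetilde R'\). By \cref{prop:braid-flips-one-T}, \(T_{\widetilde R}\) and \(T_{\widetilde R'}\) differ in exactly one triple. Since \(T\) is a class invariant, \(\Phi([R])\) and \(\Phi([R'])\) are at Hamming distance exactly \(1\). This also shows that an edge never joins a class to itself, so \(\widehat G(w_0)\) has no loops.

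\emph{Scope of ``embeds''.} The corollary only asks for an embedding of \(\widehat G(w_0)\) as a subgraph: an injective vertex map that carries edges to edges. That is exactly what the two steps above give.

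If instead one wanted an \emph{induced} embedding, one would have to show the converse: whenever two class vectors differ in one coordinate, the classes are joined by a braid move. That is the only potentially delicate point, and the statement does not require it. For that reason I would not attempt it here.

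The main work is therefore already contained in the two cited propositions, and the corollary is a direct assembly of them.
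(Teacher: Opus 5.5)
Your proposal is correct and is exactly the argument the paper intends; the paper gives no separate proof and places the corollary directly after the two propositions you use. Well-definedness and injectivity are the two directions of \cref{prop:commutation-classes-via-T}, and \cref{prop:braid-flips-one-T} makes the $T$-vectors of braid-related representatives differ in exactly one coordinate, so edges of $\widehat G(w_0)$ go to hypercube edges; your reading of ``embeds'' as an injective edge-preserving map, not necessarily induced or isometric, matches the paper's remark that the embedding is not an isometry.
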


\begin{remark}
 In general, this embedding is not an isometry. See \cite{Elnitsky1997},\cite{AwikBrelandCadmanErnst2024}.
\end{remark}

\section{Proof of the main results}\label{sec:main-results}

In this section, we prove the main theorem. For an arbitrary word
\(R=i_1\cdots i_L\), let \(\mathrm{BS}^R\) be the Bott--Samelson variety over
\(\mathbb F_q\), and let \(\pi_R\) be the Bott--Samelson map defined in
\eqref{def:pi}. When \(R\in\Red(w_0)\), we write
\(
N=\binom{n}{2}.
\)

We first translate the Bott--Samelson point-counting distribution into a weighted subword model and then into an identity in the Hecke algebra. This gives an algebraic criterion for two words to define the same distribution. We then reduce the general case to reduced words for the longest element \(w_0\). In that case, the coefficient of degree \(N-1\) in the Hecke expansion records one-letter Demazure shortenings. The key point is to organize these shortenings into a signed interval statistic \(\Theta_R\), whose variation under braid moves is exactly opposite to the variation of the commutation-class invariant \(T_R\). This allows us to recover \(T_R\) from the distribution and hence distinguish commutation classes.

\subsection{A Hecke-algebraic characterization}\label{subsec:hecke-characterization}

\begin{defin}\label{def:BS-distribution}
Let \(R=i_1\cdots i_L\) be a word in the alphabet \(\{1,\ldots,n-1\}\). For a prime power \(q\), define
\(
\mathbb P_{R,q}:S_n\to[0,1]
\)
by
\begin{equation}\label{eqn:prob}
\mathbb P_{R,q}(w)
=
\frac{
\left|\pi_R^{-1}(X_w^\circ)\right|
}{
(q+1)^{L(R)}
}.
\end{equation}
\end{defin}

\begin{lemma}
    The function $\mathbb{P}_{R,q}$ defines a probability distribution on $S_n$.
\end{lemma}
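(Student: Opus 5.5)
The claim is that the values $\mathbb P_{R,q}(w)$ are nonnegative and sum to $1$ over $w\in S_n$. The plan is to use only the Bruhat decomposition of \cref{subsec:flag-varieties} and the point count $|\mathrm{BS}^R|=(q+1)^{L(R)}$ from \cref{subsec:bott-samelson}.

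Nonnegativity is immediate, since each $\mathbb P_{R,q}(w)$ is a cardinality divided by the positive integer $(q+1)^{L(R)}$. For the sum, I would start from the decomposition over $\mathbb F_q$,
\[
\mathcal F\ell_n(\mathbb F_q)=\bigsqcup_{w\in S_n}X_w^\circ(\mathbb F_q),
\]
which holds at the level of $\mathbb F_q$-points because $G(\mathbb F_q)=\bigsqcup_w B(\mathbb F_q)\,\dot w\,B(\mathbb F_q)$. Here each cell $X_w^\circ$ is by definition the $B$-orbit of the coordinate flag $E_\bullet^w$.

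Taking preimages under the map of sets $\pi_R:\mathrm{BS}^R(\mathbb F_q)\to\mathcal F\ell_n(\mathbb F_q)$ gives a disjoint decomposition
\[
\mathrm{BS}^R(\mathbb F_q)=\bigsqcup_{w\in S_n}\pi_R^{-1}(X_w^\circ).
\]
Counting points on both sides and dividing by $(q+1)^{L(R)}$ yields $\sum_w\mathbb P_{R,q}(w)=1$.

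The only point needing care is the count $|\mathrm{BS}^R(\mathbb F_q)|=(q+1)^{L(R)}$. I would justify it by induction on $L$. Fix the first $r-1$ flags, all defined over $\mathbb F_q$. The admissible $F^{(r)}_\bullet$ are then in bijection with the $\mathbb F_q$-rational lines in the two-dimensional $\mathbb F_q$-space $F^{(r-1)}_{i_r+1}/F^{(r-1)}_{i_r-1}$, and there are exactly $q+1$ of these. This step is the least formal part of the argument, but it is already recorded in \cref{subsec:bott-samelson}.
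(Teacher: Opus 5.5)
Your proposal is correct and follows essentially the same route as the paper: the Bruhat decomposition splits $\mathrm{BS}^R$ into the disjoint preimages $\pi_R^{-1}(X_w^\circ)$, whose sizes sum to $|\mathrm{BS}^R|=(q+1)^{L(R)}$, and nonnegativity is immediate. Your inductive justification of the point count just repeats the $\mathbb P^1$-fiber argument the paper records in \cref{subsec:bott-samelson}.
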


\begin{proof}
By the Bruhat decomposition,
\[
\mathcal F\ell_n=\bigsqcup_{w\in S_n}X_w^\circ.
\]
Therefore the sets \(\pi_R^{-1}(X_w^\circ)\), for \(w\in S_n\), form a disjoint decomposition of \(\mathrm{BS}^R\). Hence
\[
\sum_{w\in S_n}
\left|\pi_R^{-1}(X_w^\circ)\right|
=
\left|\mathrm{BS}^R\right|
=
(q+1)^{L(R)}.
\]
It follows that \(\sum_{w\in S_n}\mathbb P_{R,q}(w)=1\). Nonnegativity is immediate from the definition.
\end{proof}

For two words \(R\) and \(R'\), we write
\(
R\equiv R'
\)
if
\[
\mathbb P_{R,q}(w)=\mathbb P_{R',q}(w)
\qquad\text{for all }w\in S_n
\]
as rational functions of \(q\).

\begin{defin}\label{def:comb}
Let \(R=i_1i_2\cdots i_L\) be a word in the alphabet \(\{1,\ldots,n-1\}\).
By definition, a subword \(T\) of \(R\) is specified by a strictly increasing sequence of positions
\[
a(T)=(a_1<a_2<\cdots<a_m),\qquad a_r\in [L].
\]
The associated word is
\[
T=i_{a_1}i_{a_2}\cdots i_{a_m}.
\]

We define the weight \(\operatorname{wt}_R(T)\) as follows. Start with
\[
x_0=e,\qquad d_0=0.
\]
For \(k=1,\ldots,L\), suppose \(x_{k-1}\in S_n\) and \(d_{k-1}\in \mathbb{Z}_{\ge 0}\)
have been defined. Let
\[
    \varepsilon_k(T)=
    \begin{cases}
    1, & k\in \{a_1,\ldots,a_m\},\\
    0, & k\notin \{a_1,\ldots,a_m\},
    \end{cases}
\]
and let
\[
    \gamma_k(T)=
    \begin{cases}
    1, & \ell(x_{k-1}s_{i_k})>\ell(x_{k-1}),\\
    0, & \ell(x_{k-1}s_{i_k})<\ell(x_{k-1}).
    \end{cases}
\]
We say that the \(k\)-th decision is good if
\[
    \varepsilon_k(T)=\gamma_k(T).
\]

Define
\[
    d_k=
    \begin{cases}
    d_{k-1}+1, & \text{if the \(k\)-th decision is good},\\
    d_{k-1}, & \text{otherwise}.
    \end{cases}
\]
The permutation is updated by
\[
    x_k=
    \begin{cases}
    x_{k-1}s_{i_k},
    & \text{if } k\in a(T)
      ,\\
    x_{k-1},
    & \text{otherwise}.
    \end{cases}
\]
Finally, define
\[
\operatorname{wt}_R(T):=q^{d_L}.
\]
For \(x\in S_n\), set
\[
\operatorname{wt}_R(x)
:=
\sum_{\substack{
T \text{ subword of } R\\
w(T)=x
}}
\operatorname{wt}_R(T).
\]
\end{defin}

The two indicators have the following interpretation. The value
\(\varepsilon_k(T)\) records whether the \(k\)-th letter of \(R\) is selected
in the  subword \(T\), while \(\gamma_k(T)\) records whether right
multiplication by \(s_{i_k}\) would increase the Coxeter length at the current
state \(x_{k-1}\). Thus the equality
\[
    \varepsilon_k(T)=\gamma_k(T)
\]
means that the \(k\)-th decision is compatible with the length direction:
we select the letter exactly when it increases length, and we skip it exactly
when it would decrease length. Such a decision is regarded as a good decision,
and each good decision is rewarded by a factor \(q\).

\begin{proposition}\label{prop:probdist}
Let \(R=i_1i_2\cdots i_L\) be a fixed word. Then for every \(w\in S_n\),
\[
    \mathbb{P}_{R,q}(w)
    =
    \frac{\operatorname{wt}_R(w)}{(1+q)^L}.
\]

\end{proposition}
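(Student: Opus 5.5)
We prove the equivalent identity
\[
\bigl|\pi_R^{-1}(X_w^\circ)\bigr| \;=\; \operatorname{wt}_R(w)
\]
by induction on the length \(L\) of the word. The idea is to stratify \(\mathrm{BS}^R\) according to the Schubert cells containing the intermediate flags \(F_\bullet^{(1)},\dots,F_\bullet^{(L)}\). For a point of \(\mathrm{BS}^R\), let \(y_k\in S_n\) be the unique permutation with \(F^{(k)}_\bullet\in X_{y_k}^\circ\). In particular \(y_0=e\), since \(F^{(0)}_\bullet=E_\bullet\).

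The main geometric input is the standard one-step computation. Fix \(F\in X_x^\circ\) and a simple reflection \(s=s_i\). The flags \(F'\) that agree with \(F\) outside dimension \(i\) form a \(\mathbb P^1\). Its \(\mathbb F_q\)-points are distributed as follows.
\begin{itemize}
\item If \(\ell(xs)>\ell(x)\): one point is \(F\) itself, lying in \(X_x^\circ\), and the remaining \(q\) points lie in \(X_{xs}^\circ\).
\item If \(\ell(xs)<\ell(x)\): one point lies in \(X_{xs}^\circ\), and the remaining \(q\) points lie in \(X_x^\circ\).
\end{itemize}

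To prove this, use \(B\)-equivariance: the fibre structure is preserved by \(B\), so we may assume \(F=E^x_\bullet\), a coordinate flag. The \(\mathbb P^1\) is then \(xP_i/B\), where \(P_i\) is the minimal parabolic subgroup, and we use \(P_i/B=\{B\}\sqcup U_{-\alpha_i}\text{-orbit}\), equivalently \(P_i=B\sqcup Bs_iB\).

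\begin{itemize}
\item When \(x<xs\), we have \(xBs_iB=Bx s_iB\) as double cosets, because lengths add. So the \(q\) points \(xu s_i B\), with \(u\in U_{\alpha_i}\), all land in the cell \(BxsB\).
\item When \(x>xs\), write \(x=x's\) with \(x'<x\). The point \(xs_iB=x'B\) lies in \(X_{xs}^\circ\). The other \(q\) points are \(xu s_iB\) with \(u\neq 1\). Using the rank-one relation \(s u s=u' t s u''\) inside \(\mathrm{SL}_2\), these lie in \(BxB\).
\end{itemize}

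The key point to verify is that this count depends only on the cell \(X_x^\circ\) containing \(F\), not on the particular point \(F\). This follows from \(B\)-equivariance, since \(B\) acts transitively on \(X_x^\circ\) and fixes \(F^{(0)}_\bullet\).

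With the one-step count in hand, set up the induction. Write \(R=R'i_L\) and consider the projection \(\mathrm{BS}^R\to \mathrm{BS}^{R'}\) that forgets the last flag. Its fibres are exactly the \(\mathbb P^1\)'s described above, with \(F=F^{(L-1)}_\bullet\). The fibre over a point with \(F^{(L-1)}_\bullet\in X_x^\circ\) then contributes to the cell counts as follows.
\begin{itemize}
\item If \(xs_{i_L}>x\): \(q\) points to \(xs_{i_L}\) and \(1\) point to \(x\).
\item If \(xs_{i_L}<x\): \(1\) point to \(xs_{i_L}\) and \(q\) points to \(x\).
\end{itemize}
This gives the recursion
\[
|\pi_R^{-1}(X_w^\circ)|=\sum_{x}|\pi_{R'}^{-1}(X_x^\circ)|\cdot c(x,w),
\]
where \(c(x,w)\in\{0,1,q\}\) is the coefficient just described.

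On the combinatorial side, run \cref{def:comb} and split subwords of \(R\) according to whether the last position \(L\) is selected.

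\begin{itemize}
\item \(x_{L-1}=x\) and \(L\) selected: then \(x_L=xs_{i_L}\), and the factor is \(q\) exactly when \(\gamma_L=1\), i.e.\ when \(xs_{i_L}>x\).
\item \(x_{L-1}=x\) and \(L\) not selected: then \(x_L=x\), and the factor is \(q\) exactly when \(\gamma_L=0\).
\end{itemize}

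So \(\operatorname{wt}_R\) satisfies the same recursion with the same coefficients \(c(x,w)\). The state \(x_k\) in \cref{def:comb} plays exactly the role of the cell index \(y_k\). The base case \(L=0\) gives \(1\) at \(w=e\) on both sides.

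Dividing by \(|\mathrm{BS}^R|=(q+1)^L\) yields the proposition.
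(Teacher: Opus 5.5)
Your proof is correct and follows the paper's argument: induction on $L$ via the rank-one fibre count ($1$ and $q$ points landing in $X_x^\circ$ and $X_{xs_i}^\circ$, in the order dictated by the length comparison), matched against the good/bad decision recursion of \cref{def:comb}. One small slip in your justification of the rank-one count: when $xs_i<x$ there are only $q-1$ points $xus_iB$ with $u\neq 1$, and the $q$-th point of the fibre lying in $X_x^\circ$ is $F=xB$ itself. This does not affect the conclusion.
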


\begin{proof}
For a word \(R\), write
\[
N_R(x):=\left|\pi_R^{-1}(X_x^\circ)\right|.
\]
It suffices to prove
\[
N_R(x)=\wt_R(x)
\]
for every \(x\in S_n\), since
\[
|\mathrm{BS}^R|=(q+1)^{L(R)}.
\]

We first recall the following rank-one fact. Fix \(i\in\{1,\ldots,n-1\}\) and
let
\[
\operatorname{pr}_i:G/B\longrightarrow G/P_i
\]
be the projection forgetting the \(i\)-dimensional subspace. If
\(F_\bullet\in X_y^\circ\), then
\[
\operatorname{pr}_i^{-1}(\operatorname{pr}_i(F_\bullet))
\cong \mathbb P^1.
\]
Moreover, this \(\mathbb P^1\) meets only the two Schubert cells
\(X_y^\circ\) and \(X_{ys_i}^\circ\). More precisely, if
\(\ell(ys_i)>\ell(y)\), then
\[
\left|
\operatorname{pr}_i^{-1}(\operatorname{pr}_i(F_\bullet))
\cap X_y^\circ
\right|=1,
\qquad
\left|
\operatorname{pr}_i^{-1}(\operatorname{pr}_i(F_\bullet))
\cap X_{ys_i}^\circ
\right|=q.
\]
If \(\ell(ys_i)<\ell(y)\), then the two numbers are reversed:
\[
\left|
\operatorname{pr}_i^{-1}(\operatorname{pr}_i(F_\bullet))
\cap X_y^\circ
\right|=q,
\qquad
\left|
\operatorname{pr}_i^{-1}(\operatorname{pr}_i(F_\bullet))
\cap X_{ys_i}^\circ
\right|=1.
\]
This follows from the Bruhat decomposition of the rank-one fiber
\(\mathbb P^1\): it consists of one closed point and one open affine cell
\(\mathbb A^1\), which has \(q\) points over \(\mathbb F_q\).

We now prove the statement by induction on \(L=L(R)\).

If \(L=0\), then \(\mathrm{BS}^{\varnothing}\) consists of the single standard flag
\(E_\bullet\), and \(\pi_{\varnothing}\) maps it to \(E_\bullet\in X_{e}^\circ\).
Hence
\[
N_{\varnothing}(e)=1,\qquad
N_{\varnothing}(x)=0\quad \text{for }x\neq e.
\]
This agrees with the definition of \(\operatorname{wt}_{\varnothing}\).

If \(L=1\), say \(R=i_1\), then \(\mathrm{BS}^R\cong \mathbb P^1\). The
Bott--Samelson map sends one point to \(X_{e}^\circ\) and the remaining
\(q\) points to \(X_{s_{i_1}}^\circ\). Hence
\[
N_R(e)=1,\qquad N_R(s_{i_1})=q,
\]
and \(N_R(x)=0\) for all other \(x\in S_n\). This agrees with \cref{def:comb}, because the empty subword has weight \(1\), while the subword consisting of the single letter \(i_1\) has weight \(q\). Thus \(N_R(x)=\wt_R(x)\) in the base case.

Now assume the result for all words of length \(L-1\). Write
\[
R=R'i,
\]
where \(i=i_L\). Fix \(x\in S_n\). A point of \(\mathrm{BS}^R\) is
obtained from a point of \(\mathrm{BS}^{R'}\), together with one choice
in the rank-one fiber of \(\operatorname{pr}_i\). By the rank-one fact above, the
last step can only move the Schubert-cell label from \(y\) to either \(y\) or
\(ys_i\). Therefore, in order to end in \(X_x^\circ\), the previous Schubert-cell
label must be either \(x\) or \(xs_i\).

There are two cases.

First suppose \(\ell(xs_i)>\ell(x)\). Then from a point ending in \(X_x^\circ\)
at step \(L-1\), exactly one choice in the last projective-line fiber remains
in \(X_x^\circ\). Also, from a point ending in \(X_{xs_i}^\circ\), exactly one
choice moves to \(X_x^\circ\). Hence
\[
N_R(x)=N_{R'}(x)+N_{R'}(xs_i).
\]
On the other hand, \cref{def:comb} gives exactly the same recurrence for the weights:
if the current state is \(x\), then skipping \(i\) is a bad decision and contributes
a factor \(1\); if the current state is \(xs_i\), then choosing \(i\) is also a bad
decision and contributes a factor \(1\). Therefore
\[
\wt_R(x)=\wt_{R'}(x)+\wt_{R'}(xs_i).
\]

Now suppose \(\ell(xs_i)<\ell(x)\). Then from a point ending in \(X_x^\circ\),
there are \(q\) choices in the last rank-one fiber which remain in \(X_x^\circ\).
Also, from a point ending in \(X_{xs_i}^\circ\), there are \(q\) choices which move
to \(X_x^\circ\). Hence
\[
N_R(x)=qN_{R'}(x)+qN_{R'}(xs_i).
\]
Again this is exactly the recurrence in \cref{def:comb}: if the current state is
\(x\), then skipping \(i\) is a good decision and contributes a factor \(q\); if the
current state is \(xs_i\), then choosing \(i\) is a good decision and contributes a
factor \(q\). Thus
\[
\wt_R(x)=q\wt_{R'}(x)+q\wt_{R'}(xs_i).
\]

By the induction hypothesis, \(N_{R'}(z)=\wt_{R'}(z)\) for every \(z\in S_n\).
The two recurrences above therefore imply
\[
N_R(x)=\wt_R(x)
\]
for every \(x\in S_n\). Dividing by
\[
|\mathrm{BS}^R|=(q+1)^L
\]
gives
\[
\mathbb{P}_{R,q}(x)=\frac{N_R(x)}{(q+1)^L}
=
\frac{\wt_R(x)}{(q+1)^L}.
\]
This proves the proposition.
\end{proof}

Therefore, after this combinatorial interpretation, we may regard \(q\) as an indeterminate, and \(\mathbb P_{R,q}\) as a rational function of \(q\). Since a nonzero rational function has only finitely many zeros, equality of two such distributions for every prime power $q$ is equivalent to equality as rational functions of $ q$.

\begin{lemma}\label{lem:support-reduced}
Let \(R\in \Red(w)\). Then
\[
    \operatorname{supp}(\mathbb P_{R,q})=\{x\in S_n:x\le w\},
\]
where the support means the set of \(x\) for which \(\mathbb P_{R,q}(x)\) is not
identically zero as a rational function of \(q\) and $\le$ denotes the Bruhat order. 
\end{lemma}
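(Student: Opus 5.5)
We work entirely with the weighted subword model of \cref{prop:probdist}. Since \(\mathbb P_{R,q}(x)=\wt_R(x)/(1+q)^{L}\) and \(\wt_R(x)\) is a sum of monomials \(q^{d_L}\) with nonnegative coefficients, \(\mathbb P_{R,q}(x)\) is identically zero exactly when no subword \(T\) of \(R\) has \(w(T)=x\). The statement therefore reduces to a purely combinatorial claim. For \(R\in\Red(w)\), we must show that \(x\) is the product of some subword of \(R\) if and only if \(x\le w\) in Bruhat order. No cancellation can occur, and this positivity is the only place where the weights matter.

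The inclusion \(\operatorname{supp}\subseteq\{x\le w\}\) goes as follows. Take any subword \(T\) of \(R\) with \(w(T)=x\). Process \(T\) letter by letter; whenever the word is not reduced, delete a pair of letters using the deletion property, obtaining a reduced subword \(T'\) of \(R\) with \(w(T')=x\). By the subword property of Bruhat order, a reduced subword of a reduced word for \(w\) represents an element \(\le w\). Hence \(x\le w\).

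For the reverse inclusion, let \(x\le w\). The subword property again, in its other direction, says that every reduced word \(R\) for \(w\) contains a reduced subword \(T\) for \(x\). Then \(w(T)=x\), and \(\wt_R(T)=q^{d_L}\neq 0\), so \(\wt_R(x)\) contains a positive monomial and is not identically zero. Geometrically this matches the fact that \(\pi_R\) surjects onto \(X_w=\bigsqcup_{x\le w}X_x^\circ\), and we may cite that as an alternative argument.

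The only real input is the subword property of Bruhat order. We will quote it from \cite{BjornerBrenti2005} rather than reprove it, so there is no genuine obstacle. The one point needing care is the positivity remark, which ensures that a nonempty set of contributing subwords really yields a nonzero rational function.
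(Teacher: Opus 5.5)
Your proposal is correct and follows essentially the same route as the paper: use \cref{prop:probdist} to reduce the support to the set of products of subwords of \(R\), then identify that set with the Bruhat interval \([e,w]\) via the subword property. You are more careful than the paper on two points it leaves implicit: the positivity argument (the monomials \(q^{d_L}\) cannot cancel), and the use of the deletion property to replace an arbitrary subword by a reduced one with the same product.
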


\begin{proof}
By \cref{prop:probdist}, the support of \(\mathbb P_{R,q}\) is the set of ordinary
products of subwords of \(R\). Since \(R\) is a reduced expression for \(w\), the
subword property \cite{BjornerBrenti2005} of the Bruhat order identifies this set with
\(\{x\in S_n:x\le w\}\). The full subword gives \(w\), and \(w\) is the unique
maximal element of the Bruhat interval \([e,w]\).
\end{proof}

\begin{ex}\label{ex:comb}
    Let \(n=4\) and take
\[
R=123.
\]
We illustrate \cref{prop:probdist} by computing the weights \(\operatorname{wt}_R(x)\) directly from \cref{def:comb}.
The key observation is that at each step we maintain the total weight of subwords ending at each permutation. After normalization by \((1+q)^3\), these weights
give the distribution \(\mathbb{P}_{R,q}\).
We first compute the weights for the prefixes \(1\), \(12\), and \(123\).  Suppose
that after a prefix \(A\), a permutation \(x\) has weight \(\mathrm{wt}_A(x)\).  When
we append the next letter \(i\), there are two choices: either we do not choose
\(s_i\), so the endpoint remains \(x\), or we choose \(s_i\), so the endpoint becomes
\(xs_i\). 

If \(\ell(xs_i)>\ell(x)\),
then choosing \(s_i\) is the good decision and contributes a factor \(q\), while not
choosing it contributes a factor \(1\).  Thus in this case, the old weight
\(\mathrm{wt}_A(x)\) contributes \(\mathrm{wt}_A(x)\) to the new weight of \(x\),
and contributes \(q\,\mathrm{wt}_A(x)\) to the new weight of \(xs_i\).

If \(\ell(xs_i)<\ell(x)\), the roles are reversed.  In this case, the old weight
\(\mathrm{wt}_A(x)\) contributes \(q\,\mathrm{wt}_A(x)\) to the new weight of \(x\),
and contributes \(\mathrm{wt}_A(x)\) to the new weight of \(xs_i\).

We start from the empty word:
\[
\mathrm{wt}_{\varnothing}(1234)=1.
\]

For the first prefix \(1\), we apply \(s_1\). Since
\[
1234s_1=2134,\qquad \ell(2134)>\ell(1234),
\]
we get
\[
\mathrm{wt}_{1}(1234)=1,\qquad
\mathrm{wt}_{1}(2134)=q.
\]

Next consider the prefix \(12\). We apply \(s_2\) to the two states above:
\[
1234s_2=1324,\qquad 2134s_2=2314.
\]
Both moves increase length, so
\[
\mathrm{wt}_{12}(1234)=1,\quad
\mathrm{wt}_{12}(2134)=q,\quad
\mathrm{wt}_{12}(1324)=q,\quad
\mathrm{wt}_{12}(2314)=q^2.
\]

Finally, for the full word \(123\), we apply \(s_3\):
\[
1234s_3=1243,\qquad 2134s_3=2143,
\]
\[
1324s_3=1342,\qquad 2314s_3=2341.
\]
Again all four moves increase length. Therefore the weights for the three prefixes
are summarized in the following table:
\[
\begin{array}{c|c|c|c}
x & \mathrm{wt}_{1}(x) & \mathrm{wt}_{12}(x) & \mathrm{wt}_{123}(x) \\ \hline
1234 & 1 & 1 & 1\\
2134 & q & q & q\\
1324 & 0 & q & q\\
2314 & 0 & q^2 & q^2\\
1243 & 0 & 0 & q\\
2143 & 0 & 0 & q^2\\
1342 & 0 & 0 & q^2\\
2341 & 0 & 0 & q^3
\end{array}
\]

Thus, for \(R=123\),
\[
\begin{array}{c|c|c}
x & \mathrm{wt}_{R}(x) & \mathbb P_{R,q}(x) \\ \hline
1234 & 1 & \dfrac{1}{(1+q)^3}\\
2134 & q & \dfrac{q}{(1+q)^3}\\
1324 & q & \dfrac{q}{(1+q)^3}\\
1243 & q & \dfrac{q}{(1+q)^3}\\
2314 & q^2 & \dfrac{q^2}{(1+q)^3}\\
2143 & q^2 & \dfrac{q^2}{(1+q)^3}\\
1342 & q^2 & \dfrac{q^2}{(1+q)^3}\\
2341 & q^3 & \dfrac{q^3}{(1+q)^3}
\end{array}
\]
All other permutations in \(S_4\) have probability \(0\).

Finally,
\[
\sum_{x\in S_4}\mathrm{wt}_R(x)
=
1+3q+3q^2+q^3
=
(1+q)^3.
\]
Therefore the entries above indeed define a probability distribution after dividing
by \((1+q)^3\).
\end{ex}

For a word \(R\) , say \(R=i_1\cdots i_L\), define its generating function
\[
    F(R)=\prod_{j=1}^{L}(1+T_{i_j}),
\]
where \(T_{i_j}\) is the corresponding generator of the Hecke algebra recalled in \cref{app:hecke}. We should mention that, since Hecke Algebra is not commutative, the product is always taken in the order of the indices from left to right, that is,
\[
    F(R)=(1+T_{i_1})(1+T_{i_2})\cdots(1+T_{i_L}).
\] 

For \(x\in S_n\), set
\[
    \widetilde T_x:=q^{-\ell(x)}T_x.
\]
The elements \(\widetilde T_x\) also form a \(\mathbb Z[q^{\pm 1/2}]\)-basis of \(H_n(q)\).  We will use the following expansion.
\begin{lemma}\label{lem:hecke-expansion}
For a word \(R\), we have
\[
    F(R)=\sum_{x\in S_n}\operatorname{wt}_R(x)\widetilde T_x.
\]
\end{lemma}

\begin{proof}

We argue by induction on \(L(R)\). When \(L(R)=1\), say \(R=i_1\), the marked
subwords of \(R\) are the empty subword and the subword consisting of the unique
position. By definition,
\[
    \operatorname{wt}_R(e)=1,
    \qquad
    \operatorname{wt}_R(s_{i_1})=q,
\]
and \(\operatorname{wt}_R(w)=0\) for all other \(w\in S_n\). Hence
\[
    F(R)=1+T_{i_1}
    =
    \widetilde T_{e}+q\widetilde T_{s_{i_1}}
    =
    \sum_{x\in S_n}\operatorname{wt}_R(x)\widetilde T_x.
\]

Now assume the statement holds for all words of length \(L-1\). Let
\[
    R=R'i_L
\]
be a word of length \(L\), where \(R'\) has length \(L-1\). By the induction
hypothesis,
\[
    F(R')
    =
    \sum_{x\in S_n}\operatorname{wt}_{R'}(x)\widetilde T_x.
\]
Therefore
\[
    F(R)
    =
    F(R')(1+T_{i_L})
    =
    \left(\sum_{x\in S_n}\operatorname{wt}_{R'}(x)\widetilde T_x\right)(1+T_{i_L}).
\]

We now use the Hecke algebra multiplication rule. For \(x\in S_n\), one has
\[
    \widetilde T_x(1+T_{i_L})
    =
    \begin{cases}
    \widetilde T_x+q\widetilde T_{xs_{i_L}},
    & \text{if } \ell(xs_{i_L})>\ell(x),\\[4pt]
    q\widetilde T_x+\widetilde T_{xs_{i_L}},
    & \text{if } \ell(xs_{i_L})<\ell(x).
    \end{cases}
\]
Indeed, if \(\ell(xs_{i_L})>\ell(x)\), then
\[
    T_xT_{i_L}=T_{xs_{i_L}},
\]
so
\[
    \widetilde T_x(1+T_{i_L})
    =
    \widetilde T_x+q\widetilde T_{xs_{i_L}}.
\]
If \(\ell(xs_{i_L})<\ell(x)\), then
\[
    T_xT_{i_L}=(q-1)T_x+qT_{xs_{i_L}},
\]
and hence
\[
    \widetilde T_x(1+T_{i_L})
    =
    q\widetilde T_x+\widetilde T_{xs_{i_L}}.
\]

On the other hand, the same two cases are exactly the recursive rule defining
\(\operatorname{wt}_R\). Indeed, every  subword of \(R=R'i_L\) is obtained
uniquely from a subword of \(R'\), together with the decision of whether or
not to select the last position. Suppose a subword of \(R'\) evaluates to
\(x\).

If \(\ell(xs_{i_L})>\ell(x)\), then not selecting the last position keeps the
evaluation equal to \(x\) and contributes a factor \(1\), while selecting the last
position changes the evaluation to \(xs_{i_L}\) and contributes a factor \(q\).
Thus this case contributes
\[
    \widetilde T_x+q\widetilde T_{xs_{i_L}}.
\]

If \(\ell(xs_{i_L})<\ell(x)\), then not selecting the last position keeps the
evaluation equal to \(x\) and contributes a factor \(q\), while selecting the last
position changes the evaluation to \(xs_{i_L}\) and contributes a factor \(1\).
Thus this case contributes
\[
    q\widetilde T_x+\widetilde T_{xs_{i_L}}.
\]

Therefore multiplication by \(1+T_{i_L}\) produces exactly the same recursion as
the weighted enumeration of subwords after adjoining the last letter
\(i_L\). Consequently,
\[
    F(R)
    =
    \sum_{x\in S_n}\operatorname{wt}_R(x)\widetilde T_x.
\]

\end{proof}

\begin{cor}\label{cor:hecke-characterization}
Let \(R\) and \(R'\) be two words with \(L(R)=L(R')\). Then
\(R\equiv R'\) if and only if
\[
    F(R)=F(R'),
\]
equivalently,
\[
    \prod_{i\in R}(1+T_i)=\prod_{i\in R'}(1+T_i).
\]
\end{cor}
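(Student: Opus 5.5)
The plan is to chain \cref{prop:probdist} with \cref{lem:hecke-expansion} and then use that \(\{\widetilde T_x\}_{x\in S_n}\) is a basis of \(H_n(q)\). The length hypothesis \(L(R)=L(R')\) makes the normalizing denominators in \cref{prop:probdist} equal. So both directions reduce to comparing numerators.

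\emph{Step 1.} By \cref{prop:probdist}, for every \(w\in S_n\),
\[
\mathbb P_{R,q}(w)=\frac{\wt_R(w)}{(1+q)^{L}}
\qquad\text{and}\qquad
\mathbb P_{R',q}(w)=\frac{\wt_{R'}(w)}{(1+q)^{L}},
\]
where \(L=L(R)=L(R')\). Both numerators are polynomials in \(q\) by \cref{def:comb}. Hence \(R\equiv R'\), meaning equality of all these rational functions, holds if and only if \(\wt_R(w)=\wt_{R'}(w)\) in \(\mathbb Z[q]\) for every \(w\in S_n\).

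\emph{Step 2.} By \cref{lem:hecke-expansion},
\[
F(R)=\sum_{x}\wt_R(x)\,\widetilde T_x
\qquad\text{and}\qquad
F(R')=\sum_x \wt_{R'}(x)\,\widetilde T_x.
\]
If the weights agree, the two sums agree term by term, so \(F(R)=F(R')\). Conversely, suppose \(F(R)=F(R')\). The elements \(\widetilde T_x=q^{-\ell(x)}T_x\) form a basis of \(H_n(q)\) over \(\mathbb Z[q^{\pm1/2}]\), since the \(T_x\) do and \(q^{-\ell(x)}\) is a unit. Therefore the coefficients must agree: \(\wt_R(x)=\wt_{R'}(x)\) for all \(x\). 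Combining this with Step 1 gives the equivalence. The second displayed form of the criterion is just the definition of \(F\), with factors taken in word order.

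The only point needing care is that \(\wt_R(x)\) and \(\wt_{R'}(x)\) live in the coefficient ring where linear independence applies. They lie in \(\mathbb Z[q]\subset\mathbb Z[q^{\pm1/2}]\), and \(q\) is treated as an indeterminate, which is the convention fixed just after \cref{prop:probdist}. I do not expect a genuine obstacle; the statement is essentially a repackaging of the two preceding results.
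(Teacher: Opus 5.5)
Your proposal is correct and matches the paper's proof. The paper argues in one line from \cref{lem:hecke-expansion} and the equal denominators \((1+q)^{L(R)}\). You make the two implicit ingredients explicit: \cref{prop:probdist}, and the fact that the \(\widetilde T_x\) form a basis, which is needed to read off coefficients in the direction \(F(R)=F(R')\Rightarrow R\equiv R'\).
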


\begin{proof}
This follows directly from \cref{lem:hecke-expansion}, since two words of the same
length have the same normalizing denominator \((1+q)^{L(R)}\) in the definition
of \(\mathbb P_{R,q}\).
\end{proof}

\begin{defin}
Let \(w\in S_n\). We define the \emph{standard representing word} of \(w\), denoted by
\[
\operatorname{Std}(w),
\]
to be the word obtained from the reverse bubble sort procedure applied to \(w\) as follows.

Starting from the one-line notation of \(w\), we repeatedly perform left-to-right passes.
In each pass, whenever two adjacent entries are in increasing order, we swap them.
Equivalently, if at some step the current permutation is \(u\in S_n\) and
\[
u(i)<u(i+1),
\]
then we apply the simple reflection \(s_i\), recording the index \(i\) in the word.
We continue this process until the longest element
\[
w_0=n(n-1)\cdots 321
\]
is reached.

The resulting sequence of indices
\[
i_1i_2\cdots i_{\ell}
\]
is called the standard representing word of \(w\). Thus
\[
\operatorname{Std}(w)=i_1i_2\cdots i_{\ell},
\]
where
\[
w s_{i_1}s_{i_2}\cdots s_{i_{\ell}}=w_0,
\]
and the indices are recorded in the order in which the adjacent swaps occur in the reverse bubble sort procedure.

Equivalently, \(\operatorname{Std}(w)\) is determined recursively as follows.
If \(w=w_0\), then
\[
\operatorname{Std}(w)=\emptyset.
\]
Otherwise, starting from \(w\), we repeatedly scan the permutation from left to right; whenever
\[
u(i)<u(i+1),
\]
we replace \(u\) by \(us_i\) and append the letter \(i\) to the word. Repeating this procedure until \(w_0\) is obtained produces \(\operatorname{Std}(w)\).

In other words, \(\operatorname{Std}(w)\) is the word naturally associated with sorting \(w\) to \(w_0\) by reverse bubble sort.
\end{defin}

\begin{defin}\label{def:std-ext}
For any reduced word \(R\in\Red(w)\), define its \emph{standard extension} to be
\[
    \Ext(R):=R\std(w).
\]
Since each step in the construction of \(\std(w)\) increases Coxeter length by one,
\(\Ext(R)\) is a reduced word for \(w_0\).
\end{defin}

Now we can reduce the \cref{quest:main} to the case where \(w(R_1)=w(R_2)=w_0\).

\begin{proposition}\label{prop:standard-extension-preserves-equivalence}
Let \(R,R'\in\Red(w)\). If \(R\equiv R'\), then
\(\Ext(R)\equiv\Ext(R')\).
\end{proposition}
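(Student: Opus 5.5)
The plan is to argue entirely through the Hecke-algebraic characterization of \cref{cor:hecke-characterization}. Since \(R,R'\in\Red(w)\), both words have length \(\ell(w)\), so the hypothesis \(R\equiv R'\) gives
\[
F(R)=F(R').
\]
By \cref{def:std-ext}, we have \(\Ext(R)=R\,\std(w)\) and \(\Ext(R')=R'\,\std(w)\). The word \(\std(w)\) depends only on the permutation \(w\), and \(w=w(R)=w(R')\) because both words lie in \(\Red(w)\). Hence the same suffix is appended to both words.

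The key step is that \(F\) is multiplicative with respect to concatenation. Directly from the definition \(F(R)=\prod_j(1+T_{i_j})\), taken in left-to-right order,
\[
F(R_1R_2)=F(R_1)\,F(R_2)
\]
holds for any words \(R_1,R_2\). Applying this with \(R_2=\std(w)\) gives
\[
F(\Ext(R))=F(R)\,F(\std(w))=F(R')\,F(\std(w))=F(\Ext(R')).
\]
Both extensions have the same length \(\binom n2\), since each is a reduced word for \(w_0\) by \cref{def:std-ext}. Applying \cref{cor:hecke-characterization} once more, this time in the reverse direction, yields \(\Ext(R)\equiv\Ext(R')\).

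There is essentially no obstacle. The only points to check are that the equivalence in \cref{cor:hecke-characterization} is a genuine biconditional for words of equal length, which holds because the \(\widetilde T_x\) form a basis by \cref{lem:hecke-expansion}, and that right multiplication in \(H_n(q)\) is well defined over the coefficient ring, so equal elements stay equal after multiplying by the common factor.

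If one preferred to avoid the Hecke algebra, the same argument can be run with the subword weights. The recursion in the proof of \cref{prop:probdist} shows that \(\wt_{RS}\) is obtained from the vector \((\wt_R(x))_{x}\) by a linear map depending only on \(S\). Therefore equal weight vectors for \(R\) and \(R'\) remain equal after appending \(S=\std(w)\).
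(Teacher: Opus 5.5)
Your proof is correct and is essentially the paper's own argument: you translate \(R\equiv R'\) into \(F(R)=F(R')\) via \cref{cor:hecke-characterization}, multiply on the right by the common factor \(F(\std(w))\), and translate back. One small correction: the basis property of the \(\widetilde T_x\) is stated just before \cref{lem:hecke-expansion} rather than proved by it, but this does not affect the argument.
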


\begin{proof}
Assume that \(R\equiv R'\). By \cref{cor:hecke-characterization}, we have
\[
    \prod_{i\in R}(1+T_i)=\prod_{i\in R'}(1+T_i).
\]
Multiplying both sides on the right by the common factor
\(\prod_{i\in\std(w)}(1+T_i)\), we obtain
\[
    \prod_{i\in R\std(w)}(1+T_i)
    =
    \prod_{i\in R'\std(w)}(1+T_i).
\]
By \cref{cor:hecke-characterization}, this is equivalent to
\(\Ext(R)\equiv\Ext(R')\).
\end{proof}

\begin{proposition}\label{prop:shorten}
    Let $R,R',T$ be three fixed words satisfying both $RT$ and $R'T$ are reduced, and $RT \sim R'T $. Then $R \sim R'$.
\end{proposition}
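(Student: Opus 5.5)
The natural tool is \cref{lem:commutation-characterization}: two words are commutation equivalent if and only if their restrictions to every pair of adjacent letters $\{i,i+1\}$ coincide. I would use it in both directions: first to convert the hypothesis $RT\sim R'T$ into equalities of restricted words, then to deduce $R\sim R'$ from equalities of restricted words.

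First, since $RT\sim R'T$, the lemma gives
\[
(RT)|_{\{i,i+1\}}=(R'T)|_{\{i,i+1\}}\qquad\text{for all } i\in[n-2].
\]
Restriction to a letter set commutes with concatenation, so this reads
\[
R|_{\{i,i+1\}}\,T|_{\{i,i+1\}}=R'|_{\{i,i+1\}}\,T|_{\{i,i+1\}}.
\]
Both sides end in the same suffix $T|_{\{i,i+1\}}$, so cancelling it on the right in the free monoid gives $R|_{\{i,i+1\}}=R'|_{\{i,i+1\}}$ for every $i\in[n-2]$.

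Second, applying \cref{lem:commutation-characterization} in the reverse direction yields $R\sim R'$.

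The lemma is stated for arbitrary words. In particular, commutation moves preserve length and letter content, so $R$ and $R'$ automatically have the same length. Reducedness of $RT$ and $R'T$ then implies that $R$ and $R'$ are reduced. Moreover, $w(R)=w(RT)w(T)^{-1}=w(R'T)w(T)^{-1}=w(R')$, so $R\sim R'$ holds as an equivalence within $\Red(w(R))$.

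The only potential subtlety, and the step I would check carefully, is the edge case $n\le 2$, where $[n-2]$ is empty. For $n=2$ the alphabet is $\{1\}$, and a reduced word has length at most $1$. Then $RT\sim R'T$ forces $RT=R'T$, hence $R=R'$ by cancellation. More generally, if one prefers not to rely on the lemma, the same conclusion can be reached by a direct argument: a sequence of commutation moves from $RT$ to $R'T$ induces a permutation of positions that preserves the relative order of any two letters $j,k$ with $|j-k|\le 1$. Restricting this to the positions carrying letters of $R$ gives a sequence of commutation moves from $R$ to $R'$. This direct route is longer, so the proof via restrictions is the one I would write.
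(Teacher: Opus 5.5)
Your argument is correct and follows the paper's route. The paper's proof is just the one-line citation of \cref{lem:commutation-characterization}, and you have supplied the omitted details: restriction commutes with concatenation, you cancel the common suffix $T|_{\{i,i+1\}}$ on the right, and then you apply the lemma in reverse. Your separate treatment of $n\le 2$ is a worthwhile addition. For arbitrary words the lemma as literally stated fails there: with $[n-2]=\varnothing$ it would give $1\sim 11$. In that case, however, no commutation moves exist, so the claim follows directly by cancellation.
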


\begin{proof}
This follows directly from \cref{lem:commutation-characterization}.
\end{proof}

Now we can prove the following reduction property:

\begin{cor}\label{cor:reduction-w0}
To prove \Cref{thm:main} for arbitrary reduced words, it suffices to prove it in
the special case
\[
    R_1,R_2\in\Red(w_0).
\]
\end{cor}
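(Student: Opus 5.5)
The plan is to show that \Cref{thm:main} for arbitrary reduced words follows formally from the case \(R_1,R_2\in\Red(w_0)\), using the support lemma, the standard extension, and the cancellation property of commutation classes. Let \(R_1,R_2\) be arbitrary reduced words; we treat the two implications separately.

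\emph{Commutation class implies equal distributions.} This direction needs no reduction. If \(R_1\sim R_2\), then \(R_1\) and \(R_2\) represent the same permutation and have the same length. A single commutation move swaps adjacent factors \((1+T_i)(1+T_j)\) with \(|i-j|>1\). These factors commute in \(H_n(q)\), since \(T_iT_j=T_jT_i\). Hence \(F(R_1)=F(R_2)\), and \cref{cor:hecke-characterization} gives \(R_1\equiv R_2\).

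\emph{Equal distributions imply the same commutation class.} Suppose \(\mathbb P_{R_1,q}=\mathbb P_{R_2,q}\). The first step is to show that \(R_1,R_2\) represent the same permutation. Let \(w_j=w(R_j)\). By \cref{lem:support-reduced}, the support of \(\mathbb P_{R_j,q}\) is the Bruhat interval \([e,w_j]\), whose unique maximal element is \(w_j\). Equal distributions have equal supports, so \(w_1=w_2=:w\). Because both words are reduced, \(L(R_1)=L(R_2)=\ell(w)\), so the normalizing denominators agree and \cref{cor:hecke-characterization} applies.

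Next, \cref{prop:standard-extension-preserves-equivalence} gives \(\Ext(R_1)\equiv\Ext(R_2)\). By \cref{def:std-ext}, both \(\Ext(R_1)=R_1\std(w)\) and \(\Ext(R_2)=R_2\std(w)\) lie in \(\Red(w_0)\). The theorem for \(w_0\), assumed in the special case, then yields
\[
R_1\std(w)\sim R_2\std(w).
\]
Finally, apply \cref{prop:shorten} with \(T=\std(w)\). Both concatenations are reduced and lie in the same commutation class, so \(R_1\sim R_2\).

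\emph{Main point of care.} The only subtle step is the first one, showing \(w(R_1)=w(R_2)\). A priori, equality of the two distributions does not presuppose that the words represent the same element, or even that they have the same length. The support argument settles both points at once, before the Hecke-algebra criterion is used. The remaining steps are direct invocations of the earlier results.
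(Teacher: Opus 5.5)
Your proposal is correct and follows essentially the same route as the paper: the support lemma forces $w(R_1)=w(R_2)$, and then \cref{prop:standard-extension-preserves-equivalence}, the $w_0$-case of \cref{thm:main}, and \cref{prop:shorten} finish the argument. The only differences are that you spell out the easy direction, which the paper defers to the proof of \cref{thm:main}, and that you explicitly note the equal lengths needed to apply \cref{cor:hecke-characterization}.
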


\begin{proof}
It suffices to prove the nontrivial direction. Suppose that
\[
    \mathbb P_{R_1,q}=\mathbb P_{R_2,q}
\]
as rational functions of \(q\). By \cref{lem:support-reduced}, the support of
\(\mathbb P_{R,q}\) has the unique Bruhat-maximal element \(w(R)\). Hence equality
of the two distributions implies equality of supports, and therefore
\[
    w(R_1)=w(R_2)=:w.
\]

Assume that \Cref{thm:main} has been proved for reduced words of \(w_0\).  By
\cref{prop:standard-extension-preserves-equivalence}, we have
\[
    \mathbb P_{\Ext(R_1),q}=\mathbb P_{\Ext(R_2),q}
\]
as rational functions of \(q\). Since \(\Ext(R_1),\Ext(R_2)\in\Red(w_0)\), the
\(w_0\)-case of \Cref{thm:main} gives
\[
    \Ext(R_1)\sim \Ext(R_2).
\]
Finally, by \cref{prop:shorten}, we obtain \(R_1\sim R_2\). This proves the
reduction to the case of reduced words of \(w_0\).
\end{proof}


The next arguments first treat reduced words of \(w_0\). The general case will be recovered in \cref{subsec:main-theorem-proof} using the reduction above.

 To see how a braid move affects our distribution, it suffices to reveal how it affects the expansion $F(R)$. 

 Observe that for
\[
    R_1=T_1\,i\,(i+1)\,i\,T_2,
    \qquad
    R_2=T_1\,(i+1)\,i\,(i+1)\,T_2,
\]
denote
\[
    R_1'=T_1\,i\,T_2,
    \qquad
    R_2'=T_1\,(i+1)\,T_2,
\]
and
\[
    v_1=\Dem(T_1\,i\,T_2),
    \qquad
    v_2=\Dem(T_1\,(i+1)\,T_2),
\]
we have
\[
    F(R_1')-F(R_2')
    =
    q^{N-2}\bigl(\widetilde T_{v_1}-\widetilde T_{v_2}\bigr)
    +
    O(q^{N-3}).
\]
Moreover, one can check that
\[
    (1+T_i)(1+T_{i+1})(1+T_i)
    -
    (1+T_{i+1})(1+T_i)(1+T_{i+1})
    =
    q\bigl((1+T_i)-(1+T_{i+1})\bigr),
\]
therefore,
\begin{equation}\label{eq:braid-diff}
\begin{aligned}
    F(R_1)-F(R_2)
    &=
    q\bigl(F(R_1')-F(R_2')\bigr) \\
    &=
    q^{N-1}\bigl(\widetilde T_{v_1}-\widetilde T_{v_2}\bigr)
    +
    O(q^{N-2}).
\end{aligned}
\end{equation}

Here \(O(q^{N-2})\) denotes a linear combination
\[
    \sum_{x\in S_n} \wt_x(q)\,\widetilde T_x
\]
such that each \(\wt_x(q)\in \mathbb{Z}[q]\) has degree at most \(N-2\).

\begin{defin}
For a reduced word $R\in \Red(w_0)$, define
\[
D_R(x):=[q^{N-1}]\wt_R(x)\qquad \text{for } x\in S_n,
\]
where $N=\binom{n}{2}$.
\end{defin}

\begin{remark}[Combinatorial meaning of \(D_R(x)\)]
Let \(R=i_1i_2\cdots i_N\in\Red(w_0)\). Then \(D_R(x)\) counts one-letter Demazure shortenings of \(R\). More precisely,
\[
D_R(x)
=
\#\left\{
j\in[N]:
\Dem(i_1\cdots\widehat{i_j}\cdots i_N)=x
\right\}.
\]

Indeed, multiply \(F(R)\) from left to right in the normalized basis. At each step, the degree-maximizing choice is exactly the Demazure choice and contributes one factor of \(q\). Since \(R\) is reduced, choosing the degree-maximizing branch at every position gives the unique top-degree term
\[
q^N\widetilde T_{w_0}.
\]
A contribution in degree \(N-1\) is obtained by making the non-maximal choice at exactly one position and the maximal choice at all other positions. If the non-maximal choice is made at position \(j\), then the resulting endpoint is precisely
\[
\Dem(i_1\cdots\widehat{i_j}\cdots i_N).
\]
Thus each deletion position \(j\) contributes one copy of \(q^{N-1}\) to the coefficient of this Demazure product, and summing over all \(j\) gives the stated formula.
\end{remark}

The preceding discussion shows that the coefficient \(D_R(x)\) is governed by one-letter Demazure shortenings of \(R\).  In order to compare these coefficients for two reduced words related by a braid move, we therefore need a precise description of the Demazure products obtained by shortening the braid.  This is the purpose of \cref{subsec:demazure-shortenings}.

\subsection{Demazure shortenings and forward--backward decompositions}\label{subsec:demazure-shortenings}

We begin with the local calculation used throughout the rest of the proof.  Suppose that a reduced word contains a braid \(i(i+1)i\), and shorten this braid to the single letter \(i\).  The following proposition describes the resulting Demazure product by comparing it with the longest element \(w_0\).

\begin{proposition}\label{prop:cycle-description}
Let $R_1=T_1\,i\,(i+1)\,i\,T_2$ be a reduced word for the longest element \(w_0\in S_n\). Denote $v=\Dem (T_1\, i\, T_2)$. Let \(u=w(T_1)\) be the ordinary product of the prefix \(T_1\), and write
\[
u(i)=a,\qquad u(i+1)=b,\qquad u(i+2)=c.
\]
Then \(a<b<c\). 
Define
\[
A:=\{u(j):\,j\le i+1,\ u(j)\in [a,c]\},
\qquad
B:=\{u(j):\,j\ge i+2,\ u(j)\in [a,c]\}.
\]
Write these sets in increasing order as
\[
A=\{a=g_1<g_2<\cdots<g_r\},
\qquad
B=\{h_1<h_2<\cdots<h_s=c\}.
\]
Let \(\sigma\in S_n\) be the cycle
\[
\sigma=(g_1\,g_2\,\cdots\,g_r\,h_s\,h_{s-1}\,\cdots\,h_1),
\]
extended by the identity outside \([a,c]\). Then
\[
w_0=\sigma v.
\]
Equivalently, with the notation \(\pos_x(t):=x^{-1}(t)\), one has
\begin{align*}
    \pos_v(g_j)&=\pos_{w_0}(g_{j+1})\qquad (1\le j<r),\\
    \pos_v(g_r)&=\pos_{w_0}(h_s),\\
    \pos_v(h_j)&=\pos_{w_0}(h_{j-1})\qquad (1<j\le s),\\
    \pos_v(h_1)&=\pos_{w_0}(g_1).
\end{align*}
\end{proposition}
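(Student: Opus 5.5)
The plan is to compare, letter by letter along \(T_2\), the genuine product \(w(R_1)=w_0\) with the Demazure product \(v=\Dem(T_1\,i\,T_2)\). The two are related by a permutation of values that only changes when the Demazure product ``skips'' a letter.

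First we dispose of \(a<b<c\). Since \(R_1\) is reduced, so is its prefix \(T_1\,i\,(i+1)\,i\). In one-line notation, right multiplication by \(s_j\) swaps the entries in positions \(j,j+1\), and it increases length exactly when those entries are increasing. Reducedness of \(T_1\,i\,(i+1)\,i\) forces the three successive swaps to act on increasing pairs. Starting from positions \(i,i+1,i+2\) holding \(u(i),u(i+1),u(i+2)\), this forces \(u(i)<u(i+1)<u(i+2)\).

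Next we set up the two processes. Write \(T_2=j_1\cdots j_m\). Let \(y_0=u s_i s_{i+1}s_i\) and \(y_k=y_{k-1}s_{j_k}\), so \(y_m=w_0\) and every step swaps an increasing pair of entries. Let \(v_0=\Dem(T_1\,i)=us_i\); this is valid since \(T_1 i\) is reduced. Let \(v_k=v_{k-1}*s_{j_k}\), so \(v_m=v\). We maintain a permutation \(\sigma_k\) of values with \(y_k=\sigma_k v_k\), that is, \(y_k(p)=\sigma_k(v_k(p))\) for every position \(p\). Initially the positions \(i,i+1,i+2\) hold \(c,b,a\) in \(y_0\) and \(b,a,c\) in \(v_0\), and all other positions agree. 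Hence \(\sigma_0\) is the \(3\)-cycle \(b\mapsto c\), \(a\mapsto b\), \(c\mapsto a\).

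The update rule at step \(k\) is as follows. Let \(P<Q\) be the values of \(y_{k-1}\) in positions \(j_k,j_k+1\). The corresponding values of \(v_{k-1}\) are \(\sigma_{k-1}^{-1}(P)\) and \(\sigma_{k-1}^{-1}(Q)\). If these are increasing, then both processes swap and \(\sigma_k=\sigma_{k-1}\). Otherwise the Demazure product does nothing while \(y\) swaps, so \(\sigma_k=(P\,Q)\,\sigma_{k-1}\). The final answer is \(\sigma=\sigma_m\), since \(w_0=\sigma v\).

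The core of the argument is an invariant describing \(\sigma_k\) explicitly, and we would prove it by induction on \(k\). At each time, \(\sigma_k\) should be a single cycle of the shape in the statement, \((g_1\cdots g_r\,h_s\cdots h_1)\). Here \(g_1=a\) and \(h_s=c\), the \(g\)'s increase and the \(h\)'s increase. The \(g\)'s are the values in \([a,c]\) that have so far been swapped past the ``defect'' from one side, and the \(h\)'s are those swapped past it from the other side. Concretely, a skip occurs only when a value \(P\in[a,c]\) meets a value of the cycle at the boundary of the current \(g\)-block or \(h\)-block. Multiplying by \((P\,Q)\) then splices \(P\) into the cycle at the correct end, and monotonicity of the chains is preserved. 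Values outside \([a,c]\) never cause a skip: reducedness of \(R_1\) means such a value crosses \(a,b,c\) in a consistent order in both processes, so \(\sigma_{k-1}^{-1}\) preserves the relevant order.

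Finally, once every pair has been inverted, each value of \([a,c]\) other than \(a,b,c\) has crossed the defect exactly once. It joins the \(g\)-chain or the \(h\)-chain according to whether it started left of position \(i+2\) in \(u\) or at/right of it. This identifies the final chains with the sets \(A\) and \(B\) of the statement, and the displayed position formulas are just \(w_0=\sigma v\) rewritten via \(\pos\).

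I expect the main obstacle to be the inductive invariant: checking that a skip always splices a new value at an end of one of the chains, rather than in the middle or outside \([a,c]\). I would handle this by a case analysis on the relative position of \(P,Q\) with respect to \(a\) and \(c\), using the fact that in a reduced word for \(w_0\) each pair of values inverts exactly once.
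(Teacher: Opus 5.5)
Your framework is correct as far as it goes. The relation $y_k=\sigma_k v_k$, the update $\sigma_k=(P\,Q)\,\sigma_{k-1}$ at a skip, and $\sigma_0=(a\,b\,c)$ are all right. So is the fact that a value outside $[a,c]$ never causes a skip: it is fixed by $\sigma_{k-1}$ and compares the same way with $Q$ and $\sigma_{k-1}^{-1}(Q)$ when both lie in $[a,c]$. But the step you postpone is the entire content of the proposition, and your description of it is wrong.

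Take $n=4$, $u=2134$ (so $T_1=1$), $i=2$, $(a,b,c)=(1,3,4)$, and $T_2=12$. Then $R_1=123212\in\Red(w_0)$, $y_0=2431$, $v_0=2314$, and $\sigma_0=(1\,3\,4)$.
\begin{itemize}
\item The first letter of $T_2$ swaps $2$ and $4=c$ in $y$ with no skip, so $2$ passes $c$ without joining the cycle.
\item The second letter swaps $P=2$, $Q=3$ while $\sigma_1^{-1}(2)=2>1=\sigma_1^{-1}(3)$. This gives $\sigma_2=(2\,3)(1\,3\,4)=(1\,2\,3\,4)$.
\end{itemize}
The new value lands in the \emph{middle} of the $g$-chain, between $g_1=1$ and $g_2=3$. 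So the claim that a skip splices $P$ in at an end of a chain is false. The phrase ``each value crosses the defect exactly once'' also has no definite meaning here.

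More fundamentally, the shape of $\sigma_{k-1}$ does not determine whether step $k$ skips; that depends on which two values are adjacent in $y_{k-1}$. An induction carrying only the cycle shape therefore cannot close. You need an invariant with positional content, recording which pairs are already inverted in $y_{k-1}$. It must be strong enough to show two things:
\begin{itemize}
\item[(i)] No skip ever involves two values already on the cycle. Otherwise $(P\,Q)\sigma_{k-1}$ splits the cycle in two, as would happen if $P=h_k<Q=g_j$ (with $k<s$, $j\ge 2$) were adjacent in $y_{k-1}$ with $h_{k+1}>g_{j-1}$.
\item[(ii)] A newly absorbed value $d$ enters the $g$-chain exactly when $d$ sits left of position $i+2$ in $u$, at the slot forced by monotonicity.
\end{itemize}
Without this, the identification of the final chains with $A$ and $B$ is asserted rather than proved.

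The paper sidesteps this analysis. Because $v=us_i*w(T_2)$ depends only on the group element $w(T_2)$, it replaces $T_2$ by a convenient reduced word: sort the two side blocks, then apply the explicit blocks $P_1P_2$. After that, the top row is a shuffle of two decreasing sequences. Every remaining swap exchanges some $g_j<h_k$ whose bottom partners $g_{j-1}<h_{k+1}$ are also increasing, so no further skips occur and the columns $\binom{\sigma(x)}{x}$ travel intact to the end. Importing that freedom of choice into your setup would let your tracking argument go through with essentially no case analysis.
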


Before giving the proof, we illustrate the mechanism behind
\cref{prop:cycle-description} in a concrete example. The diagram below should be
read from top to bottom. Each vertical arrow records one explicit block of
simple reflections applied from left to right to both rows, in the Demazure
sense. The proof below abstracts the same three stages in general.

\begin{ex}
\begingroup
\setlength{\abovedisplayskip}{6pt}
\setlength{\belowdisplayskip}{6pt}
\setlength{\abovedisplayshortskip}{4pt}
\setlength{\belowdisplayshortskip}{4pt}
Let
\[
\begin{gathered}
n=10,\qquad i=5,\qquad
u=9\,7\,6\,2\,1\,4\,10\,8\,5\,3,\\
a=u(5)=1,\qquad b=u(6)=4,\qquad c=u(7)=10.
\end{gathered}
\]
Thus
\[
A=\{1,2,4,6,7,9\},\qquad B=\{3,5,8,10\}.
\]
Writing
\[
\begin{aligned}
A&=\{g_1<g_2<\cdots<g_6\}=\{1,2,4,6,7,9\},\\
B&=\{h_1<h_2<h_3<h_4\}=\{3,5,8,10\},
\end{aligned}
\]
the cycle predicted by \cref{prop:cycle-description} is
\[
\sigma=(g_1\,g_2\,g_3\,g_4\,g_5\,g_6\,h_4\,h_3\,h_2\,h_1)
      =(1\,2\,4\,6\,7\,9\,10\,8\,5\,3).
\]

\begin{center}
\begingroup
\definecolor{Ared}{RGB}{175,45,35}
\definecolor{Bblue}{RGB}{35,90,170}
\definecolor{NoteOrange}{RGB}{190,110,20}

\resizebox{!}{0.54\textheight}{%
\begin{tikzpicture}[
    >=Latex,
    x=1cm,y=1cm,
    cellA/.style={draw=Ared, fill=Ared!8, rounded corners=1.6pt,
        minimum width=5.8mm, minimum height=4.7mm, inner sep=0.5pt,
        font=\footnotesize, text=Ared},
    cellB/.style={draw=Bblue, fill=Bblue!8, rounded corners=1.6pt,
        minimum width=5.8mm, minimum height=4.7mm, inner sep=0.5pt,
        font=\footnotesize, text=Bblue},
    rowlab/.style={font=\footnotesize\bfseries, anchor=west},
    panel/.style={draw=black!25, rounded corners=3pt},
    paneltitle/.style={font=\small\bfseries},
    blocknote/.style={draw=black!35, rounded corners=3pt, fill=black!2,
        font=\footnotesize, align=left, text width=3.55cm, inner sep=3pt},
    pairbox/.style={draw=black!45, dashed, rounded corners=1.2pt},
    flow/.style={->, very thick},
    emphbox/.style={draw=NoteOrange, rounded corners=2pt}
]

\path[use as bounding box] (-5.25,-15.8) rectangle (6.70,0.80);

\newcommand{\Acell}[3]{\node[cellA] at ({0.55 + 0.60*(#1)},#2) {#3};}
\newcommand{\Bcell}[3]{\node[cellB] at ({0.55 + 0.60*(#1)},#2) {#3};}
\newcommand{\RowLab}[2]{\node[rowlab] at (-1.2,#1) {#2};}
\newcommand{\RowLabShort}[2]{\node[rowlab, anchor=east] at (0.1,#1) {#2};}

\def\Y{0.00}
\draw[panel] (-1.25,\Y+0.45) rectangle (6.45,\Y-1.95);
\node[paneltitle] at (2.60,\Y+0.18) {After the braid / shortening};

\RowLab{\Y-0.62}{nominal}
\Acell{0}{\Y-0.62}{9}
\Acell{1}{\Y-0.62}{7}
\Acell{2}{\Y-0.62}{6}
\Acell{3}{\Y-0.62}{2}
\node[cellB] (p0ten) at ({0.55 + 0.60*4},\Y-0.62) {10};
\Acell{5}{\Y-0.62}{4}
\Acell{6}{\Y-0.62}{1}
\Bcell{7}{\Y-0.62}{8}
\Bcell{8}{\Y-0.62}{5}
\Bcell{9}{\Y-0.62}{3}

\RowLab{\Y-1.20}{actual}
\Acell{0}{\Y-1.20}{9}
\Acell{1}{\Y-1.20}{7}
\Acell{2}{\Y-1.20}{6}
\Acell{3}{\Y-1.20}{2}
\Acell{4}{\Y-1.20}{4}
\Acell{5}{\Y-1.20}{1}
\Bcell{6}{\Y-1.20}{10}
\Bcell{7}{\Y-1.20}{8}
\Bcell{8}{\Y-1.20}{5}
\Bcell{9}{\Y-1.20}{3}

\def\Y{-4.30}
\draw[panel] (-1.25,\Y+0.45) rectangle (6.45,\Y-1.95);
\node[paneltitle] at (2.60,\Y+0.18) {After Block I};

\RowLab{\Y-0.62}{nominal}
\node[cellB] (p1ten) at ({0.55 + 0.60*0},\Y-0.62) {10};
\Acell{1}{\Y-0.62}{9}
\Acell{2}{\Y-0.62}{7}
\Acell{3}{\Y-0.62}{6}
\Acell{4}{\Y-0.62}{4}
\Acell{5}{\Y-0.62}{2}
\node[cellA] (p1one) at ({0.55 + 0.60*6},\Y-0.62) {1};
\Bcell{7}{\Y-0.62}{8}
\Bcell{8}{\Y-0.62}{5}
\Bcell{9}{\Y-0.62}{3}

\RowLab{\Y-1.20}{actual}
\Acell{0}{\Y-1.20}{9}
\Acell{1}{\Y-1.20}{7}
\Acell{2}{\Y-1.20}{6}
\Acell{3}{\Y-1.20}{4}
\Acell{4}{\Y-1.20}{2}
\Acell{5}{\Y-1.20}{1}
\Bcell{6}{\Y-1.20}{10}
\Bcell{7}{\Y-1.20}{8}
\Bcell{8}{\Y-1.20}{5}
\Bcell{9}{\Y-1.20}{3}

\draw[emphbox, NoteOrange] (0.20,\Y-1.46) rectangle (3.88,\Y-0.93);
\node[font=\footnotesize, text=NoteOrange]
    at (2.05,\Y-1.67) {actual first six entries are decreasing};

\def\Y{-8.85}
\draw[panel] (-1.25,\Y+0.45) rectangle (6.45,\Y-2.10);
\node[paneltitle] at (2.60,\Y+0.18) {After Block II};

\RowLab{\Y-0.62}{nominal}
\Bcell{0}{\Y-0.62}{10}
\Acell{1}{\Y-0.62}{9}
\Acell{2}{\Y-0.62}{7}
\Acell{3}{\Y-0.62}{6}
\Acell{4}{\Y-0.62}{4}
\Acell{5}{\Y-0.62}{2}
\Bcell{6}{\Y-0.62}{8}
\Bcell{7}{\Y-0.62}{5}
\Bcell{8}{\Y-0.62}{3}
\node[cellA] (p2one) at ({0.55 + 0.60*9},\Y-0.62) {1};

\RowLab{\Y-1.20}{actual}
\Acell{0}{\Y-1.20}{9}
\Acell{1}{\Y-1.20}{7}
\Acell{2}{\Y-1.20}{6}
\Acell{3}{\Y-1.20}{4}
\Acell{4}{\Y-1.20}{2}
\Acell{5}{\Y-1.20}{1}
\Bcell{6}{\Y-1.20}{10}
\Bcell{7}{\Y-1.20}{8}
\Bcell{8}{\Y-1.20}{5}
\Bcell{9}{\Y-1.20}{3}

\foreach \k in {0,...,9}{
    \draw[pairbox] ({0.55 + 0.60*(\k) - 0.30},\Y-1.47)
                   rectangle
                   ({0.55 + 0.60*(\k) + 0.30},\Y-0.35);
}

\node[font=\footnotesize, align=center]
    at (2.60,\Y-1.78)
    {read each dashed box as a column \(\binom{\sigma(x)}{x}\)};

\def\Y{-13.45}
\draw[panel] (-1.25,\Y+0.45) rectangle (6.45,\Y-1.95);
\node[paneltitle] at (2.60,\Y+0.18) {After Block III};

\RowLabShort{\Y-0.62}{\(w_0=\)}
\Bcell{0}{\Y-0.62}{10}
\Acell{1}{\Y-0.62}{9}
\Bcell{2}{\Y-0.62}{8}
\Acell{3}{\Y-0.62}{7}
\Acell{4}{\Y-0.62}{6}
\Bcell{5}{\Y-0.62}{5}
\Acell{6}{\Y-0.62}{4}
\Bcell{7}{\Y-0.62}{3}
\Acell{8}{\Y-0.62}{2}
\Acell{9}{\Y-0.62}{1}

\RowLabShort{\Y-1.20}{\(v=\)}
\Acell{0}{\Y-1.20}{9}
\Acell{1}{\Y-1.20}{7}
\Bcell{2}{\Y-1.20}{10}
\Acell{3}{\Y-1.20}{6}
\Acell{4}{\Y-1.20}{4}
\Bcell{5}{\Y-1.20}{8}
\Acell{6}{\Y-1.20}{2}
\Bcell{7}{\Y-1.20}{5}
\Acell{8}{\Y-1.20}{1}
\Bcell{9}{\Y-1.20}{3}

\draw[flow] (-2.35,-1.95) -- (-2.35,-3.85);
\node[blocknote, anchor=east] at (-2.55,-2.90)
{Block I \(=43215\):\\
in the nominal row, \(10\) moves to the front;\\
in the actual row, the first six entries become decreasing.};

\draw[flow] (-2.35,-6.25) -- (-2.35,-8.35);
\node[blocknote, anchor=east] at (-2.55,-7.30)
{Block II \(=789\):\\
in the nominal row, \(1\) moves to the far right;\\
in the actual row, these swaps are skipped.};

\draw[flow] (-2.35,-10.95) -- (-2.35,-13.00);
\node[blocknote, anchor=east] at (-2.55,-11.98)
{Block III \(=6543768\):\\
the upper row is sorted to \(w_0\);\\
the lower row follows by exchanging whole columns.};

\draw[->, very thick, Bblue]
    (p0ten.south west) to[out=210,in=150]
    node[left=1mm, font=\footnotesize, text=Bblue] {\(10\) moves left}
    (p1ten.north west);

\draw[->, very thick, Ared]
    (p1one.south east) to[out=330,in=30]
    node[right=1mm, font=\footnotesize, text=Ared] {\(1\) moves right}
    (p2one.north east);

\end{tikzpicture}
}%
\endgroup
\end{center}

The picture should be read as follows. Block I does two things: in the nominal
row it moves \(10\) to the front, while in the actual row the same Demazure
operations make the first six entries decreasing. Block II moves \(1\) to the far
right in the nominal row, and the corresponding attempted swaps are skipped in
the actual row. After Block II, the two rows are read columnwise; each dashed box
has the form
\[
\begin{pmatrix}\sigma(x)\\ x\end{pmatrix}.
\]
Finally, Block III sorts the upper row to \(w_0\). During this last stage, the
lower row follows by exchanging neighboring columns, while the columns themselves
are never broken. Hence the same column correspondence survives to the end, and
we obtain
\[
w_0=\sigma v.
\]
In particular, the values in \(A\) move forward relative to their positions in
\(w_0\), while the values in \(B\) move backward.
\endgroup
\end{ex}

The example isolates the two structural features used in the proof. After
Blocks I and II, the relevant information is the column pairing between the
nominal row and the actual row: each column has the form
\[
\begin{pmatrix}
\sigma(x)\\
x
\end{pmatrix}.
\]
Block III then sorts the nominal row to \(w_0\). During this last stage, each
adjacent swap in the nominal row induces the corresponding adjacent swap in the
actual row, so the columns are exchanged as units and are never broken. The proof
below formalizes this mechanism for an arbitrary braid: first we construct the
general analogues of Blocks I and II, and then we show that the remaining suffix
preserves the resulting column correspondence.

We now prove \cref{prop:cycle-description} in general.

\begin{proof}[Proof of \cref{prop:cycle-description}]
Since \(T_1\,i(i+1)i\) is reduced, the three right multiplications by
\(s_i,s_{i+1},s_i\) are all length-increasing. Hence
\[
u(i)<u(i+1),\qquad u(i)<u(i+2),\qquad u(i+1)<u(i+2),
\]
and therefore \(a<b<c\).

Throughout the proof, whenever a nominal row \(\widetilde x\) and an actual row
\(x\) are displayed together, we compare them position by position. The
\emph{column} at position \(p\) means the ordered pair
\[
\begin{pmatrix}
\widetilde x(p)\\
x(p)
\end{pmatrix}.
\]
We say that a column is preserved through a simultaneous Demazure step if,
after applying the same simple reflection to the two rows, its two entries
remain paired. Equivalently, the step either leaves the column in place or
exchanges it as a whole with a neighboring column.

We first reduce to the interval \([a,c]\). Immediately after replacing the full
braid by its shortening, every value \(d\notin[a,c]\) occurs in the same position
in the nominal row and in the actual row. Thus it forms a singleton column
\[
\begin{pmatrix}
d\\
d
\end{pmatrix}.
\]
Moreover, \(d<a\) or \(d>c\). Hence \(d\) has the same order relation with every
value of \([a,c]\) in both rows. It follows that, whenever a simultaneous
Demazure step involves the column \(\binom d d\) and a neighboring column, the
swap is length-increasing in the nominal row if and only if it is
length-increasing in the actual row. Therefore such singleton columns are
preserved throughout the process. Consequently, values outside \([a,c]\) end in
their \(w_0\)-positions, and only values in \([a,c]\) can be displaced.

After relabelling the interval \([a,c]\) by \([n]\), we may assume
\[
a=1,\qquad c=n,\qquad g_1=1,\qquad h_s=n,\qquad A\sqcup B=[n].
\]
Then \(r=i+1\). Let \(t\) be determined by \(g_t=b\).

We next explain how the suffix may be chosen. We shall use the following
elementary extension principle. Suppose \(y\in S_n\), and suppose a word \(S\)
is length-increasing when applied successively to \(y\). If \(yS=z\), then \(S\)
can be extended to a reduced word from \(y\) to \(w_0\). Indeed,
\[
L(S)+\ell(z^{-1}w_0)
=
(\ell(z)-\ell(y))+(N-\ell(z))
=
N-\ell(y)
=
\ell(y^{-1}w_0).
\]
Thus \(S\), followed by any reduced expression for \(z^{-1}w_0\), is a reduced
expression for \(y^{-1}w_0\).

Apply this principle to the nominal row after the full braid. First perform
reverse bubble sort inside the two side blocks, namely in positions
\(1,\ldots,i-1\) and \(i+3,\ldots,n\), until both side blocks are decreasing.
Every swap used in this sorting exchanges an adjacent increasing pair, so it is
length-increasing. Hence this sorting word may be chosen as an initial segment
of a reduced suffix. It uses only generators inside the side blocks, so it acts
identically on the actual row. Since the Demazure product by a fixed group
element is independent of the chosen reduced expression, replacing \(T_2\) by
this new reduced suffix does not change \(v\).

After this preliminary normalization, write
\[
\alpha=g_r g_{r-1}\cdots \widehat{g_t}\cdots g_2,
\qquad
\beta=h_{s-1}h_{s-2}\cdots h_1.
\]
The two rows from which the remaining comparison starts are
\[
\widetilde x_0=\alpha\,h_s\,g_t\,g_1\,\beta,
\qquad
x_0=\alpha\,g_t\,g_1\,h_s\,\beta.
\]
Here \(\widetilde x_0\) is the nominal row, obtained by keeping the full braid,
and \(x_0\) is the actual row, obtained after shortening.

Now define
\[
P_1=(i-1)(i-2)\cdots 1\,i(i-1)\cdots(r-t+2),
\qquad
P_2=(r+1)(r+2)\cdots(n-1).
\]
All arrows below are Demazure arrows. The block \(P_1\) gives
\[
\widetilde x_0
\xrightarrow{P_1}
\widetilde x_1
=
h_s\,g_r\,g_{r-1}\cdots g_2\mid g_1\,h_{s-1}\cdots h_1,
\]
while
\[
x_0
\xrightarrow{P_1}
x_1
=
g_r\,g_{r-1}\cdots g_2\,g_1\mid h_s\,h_{s-1}\cdots h_1.
\]
Then \(P_2\) moves \(g_1=1\) to the far right in the nominal row and is skipped
in the actual row:
\[
\widetilde x_1
\xrightarrow{P_2}
\widetilde x_2
=
h_s\,g_r\,g_{r-1}\cdots g_2\mid h_{s-1}\cdots h_1\,g_1,
\]
whereas
\[
x_1
\xrightarrow{P_2}
x_2
=
g_r\,g_{r-1}\cdots g_2\,g_1\mid h_s\,h_{s-1}\cdots h_1.
\]
The displayed nominal moves are length-increasing. Therefore, by the extension
principle above, we may choose the remaining suffix after the preliminary
normalization to have the form
\[
P_1P_2P_3
\]
for some reduced word \(P_3\), with \(P_3\) carrying \(\widetilde x_2\) to
\(w_0\).

We now form the columns of the two-row array with top row \(\widetilde x_2\) and
bottom row \(x_2\):
\[
\begin{array}{c|ccccccccc}
\widetilde x_2
& h_s & g_r & g_{r-1} & \cdots & g_2
& h_{s-1} & h_{s-2} & \cdots & g_1\\
x_2
& g_r & g_{r-1} & g_{r-2} & \cdots & g_1
& h_s & h_{s-1} & \cdots & h_1 .
\end{array}
\]
Equivalently, every column has the form
\[
\begin{pmatrix}
\sigma(x)\\
x
\end{pmatrix}.
\]

It remains to check that \(P_3\) preserves these columns. The nominal row
\(\widetilde x_2\) is the concatenation of two decreasing blocks,
\[
h_s\,g_r\,g_{r-1}\cdots g_2
\qquad\text{and}\qquad
h_{s-1}\cdots h_1\,g_1.
\]
Hence every remaining adjacent swap in the nominal row exchanges a neighboring
pair \(g_j,h_k\) with
\[
2\le j\le r,\qquad 1\le k\le s-1,\qquad g_j<h_k.
\]
In the actual row, the corresponding lower entries are \(g_{j-1}\) and
\(h_{k+1}\). Since
\[
g_{j-1}<g_j<h_k<h_{k+1},
\]
the same adjacent swap is also length-increasing in the actual row. Thus each
letter of \(P_3\) exchanges the same two neighboring columns in both rows; in
particular, no letter separates the two entries of any column.

By induction over the letters of \(P_3\), the column correspondence survives
until the nominal row becomes \(w_0\). The final actual row is \(v\). Therefore
the final comparison between \(w_0\) and \(v\) is still given by columns
\(\binom{\sigma(x)}{x}\), which means exactly that
\[
w_0=\sigma v.
\]
Equivalently,
\begin{align*}
    \pos_v(g_j)&=\pos_{w_0}(g_{j+1})\qquad (1\le j<r),\\
    \pos_v(g_r)&=\pos_{w_0}(h_s),\\
    \pos_v(h_j)&=\pos_{w_0}(h_{j-1})\qquad (1<j\le s),\\
    \pos_v(h_1)&=\pos_{w_0}(g_1).
\end{align*}
This proves the proposition.
\end{proof}

The position formula in \cref{prop:cycle-description} suggests a more compact way to record how a Demazure shortening differs from \(w_0\).  We introduce the following notation.

\begin{defin}\label{def:supp_fwd_bwd}
Denote by $\pos_x(t)$ the position of value $t$ in the one-line form of permutation $x$,
i.e.\ $\pos_x(t)=x^{-1}(t)$.
Define
\[
\Supp(x):=\{t\in [n]\mid \pos_x(t)\neq \pos_{w_0}(t)\}.
\]
Define ``forward/backward positions'' by
\[
\Fwd(x):=\{t\in [n]\mid \pos_x(t)<\pos_{w_0}(t)\},
\]
\[
\Bwd(x):=\{t\in [n]\mid \pos_x(t)>\pos_{w_0}(t)\}.
\]
Then $\Supp(x)=\Fwd(x)\sqcup \Bwd(x)$.
\end{defin}

With this notation, \cref{prop:cycle-description} has the following immediate consequence.  It gives the support and the forward--backward decomposition for the two Demazure shortenings associated with a braid move.

\begin{lemma}\label{lem:key-obs}
For $R_1=T_1\, i\, (i+1)\, i\, T_2$, $R_2=T_1\, (i+1)\, i\, (i+1)\, T_2$, denote
\[
v_1=\Dem(T_1\, i\, T_2),\qquad v_2=\Dem(T_1\, (i+1)\, T_2).
\]
Let \(u=w(T_1)\), and write
\[
u(i)=a,\qquad u(i+1)=b,\qquad u(i+2)=c.
\]
Then $a<b<c$, and
\[
\Supp(v_1)=\Supp(v_2)=[a,c]\cap \mathbb{Z}.
\]
Moreover,
\begin{align*}
\Fwd(v_1) &= \{u(j)\mid j\le i+1,\ u(j)\in [a,c]\}\supseteq \{a,b\},\\
\Bwd(v_1) &= \{u(j)\mid j\ge i+2,\ u(j)\in [a,c]\}\supseteq \{c\},\\
\Fwd(v_2) &= \{u(j)\mid j\le i,\ u(j)\in [a,c]\}\supseteq \{a\},\\
\Bwd(v_2) &= \{u(j)\mid j\ge i+1,\ u(j)\in [a,c]\}\supseteq \{b,c\}.
\end{align*}
\end{lemma}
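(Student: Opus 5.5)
We apply \cref{prop:cycle-description} twice: once to the braid \(i(i+1)i\) in \(R_1\), and once to the braid \((i+1)i(i+1)\) in \(R_2\), after transporting the second configuration into the form covered by the proposition. The statement \(a<b<c\) is already part of \cref{prop:cycle-description}, since \(T_1\,i(i+1)i\) is reduced.

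\emph{The word \(v_1\).} \cref{prop:cycle-description} gives \(w_0=\sigma v_1\), where \(\sigma\) is the cycle \((g_1\cdots g_r\,h_s\cdots h_1)\) on \(A\sqcup B=[a,c]\cap\mathbb Z\).
\begin{itemize}
\item Every element of \([a,c]\) lies in this cycle, and the cycle has length at least \(3\) (it contains \(a,b,c\)). So \(\sigma\) has no fixed points in \([a,c]\), which gives \(\Supp(v_1)=[a,c]\cap\mathbb Z\).
\item The position formulas read \(\pos_v(g_j)=\pos_{w_0}(g_{j+1})\) and \(\pos_v(g_r)=\pos_{w_0}(h_s)\). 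Since \(\pos_{w_0}(t)=n+1-t\) is decreasing and \(g_{j+1}>g_j\), \(h_s=c>g_r\), each \(g_j\) moves to a strictly smaller position. Hence \(A\subseteq\Fwd(v_1)\).
\item Similarly \(\pos_v(h_j)=\pos_{w_0}(h_{j-1})\) with \(h_{j-1}<h_j\), and \(\pos_v(h_1)=\pos_{w_0}(g_1)\) with \(g_1=a<h_1\). So each \(h_j\) moves to a strictly larger position, and \(B\subseteq\Bwd(v_1)\).
\end{itemize}
Because \(\Fwd\) and \(\Bwd\) are disjoint and together equal \(\Supp\), these inclusions are equalities. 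The containments \(a,b\in A\) and \(c\in B\) hold because \(u(i)=a\), \(u(i+1)=b\), \(u(i+2)=c\).

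\emph{The word \(v_2\).} Reduce by symmetry. Conjugating by \(w_0\) sends \(s_j\mapsto s_{n-j}\) and preserves lengths, hence Demazure products; this is the complement \(R\mapsto R^\bullet\). Thus \(\Dem(R^\bullet)=w_0\Dem(R)w_0\). Apply it to \(R_2^\bullet=T_1^\bullet\,j(j+1)j\,T_2^\bullet\) with \(j=n-i-1\). In one-line notation, \(x\mapsto w_0xw_0\) is \(x(p)\mapsto n+1-x(n+1-p)\).
\begin{itemize}
\item The prefix is \(u'=w(T_1^\bullet)=w_0uw_0\). Its entries at positions \(j,j+1,j+2\) are \(n+1-c,\ n+1-b,\ n+1-a\).
\item Applying the \(v_1\) case to this word and translating back through \(t\mapsto n+1-t\) and \(p\mapsto n+1-p\) should give the claimed formulas. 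Position comparisons with \(w_0\) are preserved, because \(w_0\) commutes with the conjugation. The sets with \(j'\le j+1\) correspond to original positions \(\ge i+1\), and the interval \([n+1-c,\,n+1-a]\) corresponds to \([a,c]\).
\end{itemize}

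One point needs care. Under the translation, \(\Fwd\) and \(\Bwd\) may be exchanged, depending on whether the relabelling \(t\mapsto n+1-t\) reverses the direction "position less than \(\pos_{w_0}\)". The expected output assigns the entries in positions \(\ge i+1\) to \(\Bwd\), consistent with \(b,c\in\Bwd(v_2)\). If instead the sign bookkeeping is swapped, the fallback is to rerun the block argument from the proof of \cref{prop:cycle-description} directly for the shortening \(T_1(i+1)T_2\). That argument goes through in the same way, with the splitting point moved from \(i+1\) to \(i\).
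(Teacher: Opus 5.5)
Your treatment of \(v_1\) agrees with the paper's. Both read \(\Fwd(v_1)\), \(\Bwd(v_1)\) and \(\Supp(v_1)\) off the position formulas of \cref{prop:cycle-description}. Your monotonicity argument via \(\pos_{w_0}(t)=n+1-t\), using \(c\notin A\) and \(a\notin B\), just makes explicit what the paper calls immediate.

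For \(v_2\) your route is different. The paper says the formulas are obtained ``in the same way from the other side of the braid''. That implicitly invokes an analogue of \cref{prop:cycle-description} for the shortening \((i+1)i(i+1)\to i+1\), which the proposition as stated does not cover. You instead use the complement \(R\mapsto R^\bullet\), i.e.\ conjugation by \(w_0\). Under it, \(R_2^\bullet=T_1^\bullet\,j(j+1)j\,T_2^\bullet\) with \(j=n-i-1\) has exactly the form in the proposition, and \(\Dem(T_1^\bullet\,j\,T_2^\bullet)=w_0v_2w_0\). This derives the \(v_2\) case formally from the proposition already proved, with no second run of the block construction. It is a cleaner justification than the paper's one-line appeal.

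However, you stopped short of the one computation that decides the answer. Your remark that position comparisons with \(w_0\) are preserved is also inaccurate: only the support is preserved, while the direction flips, because conjugation reverses positions via \(p\mapsto n+1-p\). Concretely, for \(x'=w_0xw_0\) one has \(\pos_{x'}(t)=n+1-\pos_x(n+1-t)\) and \(\pos_{w_0}(t)=n+1-\pos_{w_0}(n+1-t)\), so
\[
\pos_{x'}(t)-\pos_{w_0}(t)=-\bigl(\pos_x(n+1-t)-\pos_{w_0}(n+1-t)\bigr).
\]
Hence \(\Fwd(x')=\{n+1-t: t\in\Bwd(x)\}\) and \(\Bwd(x')=\{n+1-t: t\in\Fwd(x)\}\).

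Now take \(x=\Dem(T_1^\bullet\,j\,T_2^\bullet)\), so that \(x'=v_2\), and \(u'(p)=n+1-u(n+1-p)\). The \(v_1\) case gives
\(\Bwd(x)=\{u'(p):p\ge j+2,\ u'(p)\in[n+1-c,n+1-a]\}\).
Since \(p\ge j+2\) iff \(n+1-p\le i\), this yields \(\Fwd(v_2)=\{u(p):p\le i,\ u(p)\in[a,c]\}\). Symmetrically, \(\Bwd(v_2)=\{u(p):p\ge i+1,\ u(p)\in[a,c]\}\), exactly as claimed.

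You should write this out instead of hedging, and drop the fallback. The symmetry argument determines \(\Fwd(v_2)\) unambiguously. Had it produced the opposite assignment, the lemma would simply be false, and rerunning the block argument could not change that.
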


\begin{proof}
For \(v_1\), the statement follows directly from \cref{prop:cycle-description}: the values in
\[
\{u(j):j\le i+1,\ u(j)\in[a,c]\}
\]
move forward, while the values in
\[
\{u(j):j\ge i+2,\ u(j)\in[a,c]\}
\]
move backward. Hence these are precisely \(\Fwd(v_1)\) and \(\Bwd(v_1)\), and the support is \([a,c]\cap\mathbb Z\).

The formulas for \(v_2\) are obtained in the same way from the other side of the braid, namely by shortening \((i+1)i(i+1)\) to \(i+1\). The stated inclusions are immediate from \(u(i)=a\), \(u(i+1)=b\), and \(u(i+2)=c\).
\end{proof}

As a first consequence, the two Demazure shortenings produced by the two sides of the braid move cannot coincide.

\begin{cor}
    The notations are the same as above. Then $v_1\ne v_2$ as elements in $S_n$.
\end{cor}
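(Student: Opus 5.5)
The plan is to read off the claim directly from \cref{lem:key-obs}. Everything needed is already contained in the forward--backward decompositions of \(v_1\) and \(v_2\), so no new computation should be required.

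First, we fix the notation of \cref{lem:key-obs}: \(u=w(T_1)\) and \(u(i)=a\), \(u(i+1)=b\), \(u(i+2)=c\), with \(a<b<c\). Then we look at the value \(b\), where the two decompositions disagree. By \cref{lem:key-obs},
\[
b\in\Fwd(v_1)=\{u(j)\mid j\le i+1,\ u(j)\in[a,c]\},
\]
since \(b=u(i+1)\) and \(a<b<c\). On the other hand,
\[
b\in\Bwd(v_2)=\{u(j)\mid j\ge i+1,\ u(j)\in[a,c]\}.
\]
By \cref{def:supp_fwd_bwd} this means
\[
\pos_{v_1}(b)<\pos_{w_0}(b)<\pos_{v_2}(b).
\]
In particular \(\pos_{v_1}(b)\neq\pos_{v_2}(b)\), that is, \(v_1^{-1}(b)\neq v_2^{-1}(b)\), and hence \(v_1\neq v_2\).

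There is no real obstacle once \cref{lem:key-obs} is available. The only point to check is that the statement for \(v_2\) in that lemma has been justified, namely by the symmetric application of \cref{prop:cycle-description} to the braid \((i+1)\,i\,(i+1)\). If one wanted more care, I would verify this by applying \cref{prop:cycle-description} with the roles of \(s_i\) and \(s_{i+1}\) exchanged, so that the split point between the \(A\)-set and the \(B\)-set moves from position \(i+1\) to position \(i\). The value \(b=u(i+1)\) is exactly the one that switches sides. In fact the same reasoning shows that \(v_1\) and \(v_2\) differ precisely through the membership of \(b\): their \(\Fwd\) sets differ exactly by \(\{b\}\).
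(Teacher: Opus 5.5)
Your proposal is correct and matches the paper's own proof. By \cref{lem:key-obs}, $b\in\Fwd(v_1)$ and $b\in\Bwd(v_2)$, so $\pos_{v_1}(b)<\pos_{w_0}(b)<\pos_{v_2}(b)$, which forces $v_1\ne v_2$. You are also right that the substantive work lies in the $v_2$ half of \cref{lem:key-obs}, which is the mirrored version of \cref{prop:cycle-description}, rather than in this corollary.
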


\begin{proof}
By \cref{lem:key-obs}, the value \(b\) lies in \(\Fwd(v_1)\) and in \(\Bwd(v_2)\). Hence
\(\pos_{v_1}(b)<\pos_{w_0}(b)<\pos_{v_2}(b)\), so \(v_1\ne v_2\).
\end{proof}

Thus a braid move produces two distinct Demazure shortenings with the same support interval \([a,c]\), while the middle value \(b\) changes from a forward value to a backward value.  This local sign change is the information encoded by the statistic \(\Theta_R\) in \cref{subsec:theta-statistic}.


\subsection{The signed interval statistic \texorpdfstring{\(\Theta_R\)}{Theta R}}\label{subsec:theta-statistic}

\Cref{subsec:demazure-shortenings} shows that the local contribution of a braid move is detected by two pieces of information: the support interval of the Demazure shortening, and whether the middle value moves forward or backward. We now aggregate this signed information over all second-highest contributions with a fixed support interval.

\begin{defin}\label{def:sigma}
    For \(x\in S_n\) and \(b\in[n]\), define
    \[
    \sigma_b(x)=\sgn (\pos_x(b)-\pos_{w_0}(b))=
    \begin{cases}
    -1,& b\in\Fwd(x),\\
    +1,& b\in\Bwd(x),\\
    0,& b\notin\Supp(x).
    \end{cases}
    \]
\end{defin}

\begin{defin}\label{def:Theta}
For \(R\in\Red(w_0)\) and \((a,b,c)\in \mathcal{T}_n\), define
\[
\Theta_R(a,b,c)
:=
\sum_{\substack{x\in S_n\\ \Supp(x)=[a,c]\cap\mathbb Z}}
D_R(x)\sigma_b(x).
\]
\end{defin}

Thus \(\Theta_R(a,b,c)\) is the signed total contribution of all one-letter Demazure shortenings whose displaced values are exactly the interval \([a,c]\), with the sign determined by the direction of the middle value \(b\).

The usefulness of \(\Theta_R\) comes from the fact that its change under a braid move is completely local.  More precisely, when a braid \(i(i+1)i\) is replaced by \((i+1)i(i+1)\), the only second-highest contributions that change are the two Demazure shortenings described in \cref{lem:key-obs}.  Their support intervals coincide, and the middle value changes from forward to backward.  This gives the following variation formula.

\begin{proposition}\label{prop:theta-braid-variation}
Let
\[
R_1=T_1\,i(i+1)i\,T_2,
\qquad
R_2=T_1\,(i+1)i(i+1)\,T_2
\]
be two reduced words for \(w_0\). Let \(u=w(T_1)\), and write
\[
u(i)=a,\qquad u(i+1)=b,\qquad u(i+2)=c.
\]
Assume that the orientation is chosen so that
\[
T_{R_1}(a,b,c)=+1,\qquad T_{R_2}(a,b,c)=-1.
\]
Then for any \((d,e,f)\in\mathcal{T}_n\),
\[
\Theta_{R_1}(d,e,f)-\Theta_{R_2}(d,e,f)=
\begin{cases}
-2, & \text{if } (a,b,c)=(d,e,f),\\
0,  & \text{if } (a,b,c)\ne(d,e,f).
\end{cases}
\]
\end{proposition}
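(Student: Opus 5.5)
We compute $D_{R_1}-D_{R_2}$ directly from the one-letter Demazure shortening description of $D_R$ in the Remark, and then pair the result with the signs $\sigma$ that define $\Theta$.

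\textbf{Step 1: most deletions match up.} Write $R_1=T_1\,i(i+1)i\,T_2$ and $R_2=T_1\,(i+1)i(i+1)\,T_2$. The deletion positions lying in $T_1$ or $T_2$ correspond bijectively between $R_1$ and $R_2$. For such a position, the shortened words of $R_1$ and $R_2$ differ only by replacing $i(i+1)i$ with $(i+1)i(i+1)$. Since the Demazure product satisfies the braid relation (\cref{prop:demazure-relations}), both shortened words have the same Demazure product, so these positions contribute equally and cancel.

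\textbf{Step 2: the three braid positions.} In $R_1$, delete each of the three braid letters in turn:
\begin{itemize}
\item deleting the middle letter $i+1$ leaves $T_1\,ii\,T_2$, whose Demazure product equals $\Dem(T_1 i T_2)=v_1$, since $s_i*s_i=s_i$;
\item deleting the first $i$ leaves $T_1(i+1)iT_2$;
\item deleting the last $i$ leaves $T_1 i(i+1)T_2$.
\end{itemize}
Deleting the three letters of $R_2$ gives $T_1(i+1)T_2$ (Demazure product $v_2$), $T_1 i(i+1)T_2$ and $T_1(i+1)iT_2$. The two-letter words $T_1(i+1)iT_2$ and $T_1 i(i+1)T_2$ occur once on each side and cancel. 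Therefore
\[
D_{R_1}-D_{R_2}=\delta_{v_1}-\delta_{v_2}.
\]
This is consistent with \eqref{eq:braid-diff}, which gives the same identity directly.

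\textbf{Step 3: evaluate $\Theta$.} From the identity in Step 2,
\[
\Theta_{R_1}(d,e,f)-\Theta_{R_2}(d,e,f)
=[\Supp v_1=[d,f]]\,\sigma_e(v_1)-[\Supp v_2=[d,f]]\,\sigma_e(v_2).
\]
By \cref{lem:key-obs}, $\Supp v_1=\Supp v_2=[a,c]$, so the difference vanishes unless $(d,f)=(a,c)$.

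Now take $(d,f)=(a,c)$ and $a<e<c$. Lemma~\ref{lem:key-obs} shows $\Fwd(v_1)$ and $\Fwd(v_2)$ differ only in the value $b=u(i+1)$. The value $u(i+1)$ lies in $\Fwd(v_1)$, since its position index is at most $i+1$, but in $\Bwd(v_2)$, since its position index is at least $i+1$. Every other value in $[a,c]$ has the same direction in $v_1$ and $v_2$. Hence for $e\ne b$ we get $\sigma_e(v_1)=\sigma_e(v_2)$ and the difference is $0$. For $e=b$ we get $\sigma_b(v_1)-\sigma_b(v_2)=-1-(+1)=-2$.

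\textbf{Step 4: the orientation hypothesis.} It remains to check that $R_1$ is the side with $T_{R_1}(a,b,c)=+1$, so the labelling in the statement matches our computation. In $R_1=\cdots i(i+1)i\cdots$ the pairs $(a,b)$, $(a,c)$, $(b,c)$ are inverted in that order. Thus $\tau(a,b)<\tau(b,c)$, which gives $T_{R_1}(a,b,c)=+1$, and the braid move reverses this order. This is the step needing the most care: $T$ and $\Theta$ must be attached to the same triple $(a,b,c)$ with consistent signs. The main work is the support and direction bookkeeping, and \cref{lem:key-obs} already supplies it.
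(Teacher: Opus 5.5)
Your proof is correct. It differs from the paper's only in how you obtain the transport identity $D_{R_1}-D_{R_2}=\delta_{v_1}-\delta_{v_2}$, i.e.\ \eqref{eq:D-transport}. Your Step 3 is the same support/direction bookkeeping from \cref{lem:key-obs} that the paper uses.

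\textbf{The paper's route.} It works algebraically. The relation $(1+T_i)(1+T_{i+1})(1+T_i)-(1+T_{i+1})(1+T_i)(1+T_{i+1})=q\bigl((1+T_i)-(1+T_{i+1})\bigr)$ gives the exact identity $F(R_1)-F(R_2)=q\bigl(F(R_1')-F(R_2')\bigr)$. The paper then reads off the $q^{N-1}$ coefficient. For this it uses that the all-good path of the shortened word $T_1\,i\,T_2$ (resp.\ $T_1\,(i+1)\,T_2$) is unique and ends at its Demazure product.

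\textbf{Your route.} You use the deletion description of $D_R$ from the Remark and match deletion positions explicitly:
\begin{itemize}
\item positions in $T_1,T_2$ match via the $0$-Hecke braid relation;
\item the outer braid letters of $R_1$ and $R_2$ match crosswise, since both sides produce $T_1\,i(i+1)\,T_2$ and $T_1\,(i+1)i\,T_2$;
\item the middle deletions leave $T_1\,ii\,T_2$ and $T_1\,(i+1)(i+1)\,T_2$, which collapse to $v_1$ and $v_2$ because $w*s_i*s_i=w*s_i$.
\end{itemize}

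\textbf{Comparison.} Your argument is more elementary and shows concretely which shortenings cancel. It does rest on the Remark, which uses that $R_1,R_2$ are reduced: only then is the single non-maximal choice always the omission of a letter. The paper's route needs no such interpretation, and its identity holds in every degree, not just for the $q^{N-1}$ coefficient.

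Your Step 4 is not needed for the proposition itself, since the paper's proof never uses the orientation hypothesis. It is correct, though, and shows that the hypothesis holds automatically for the labelling $R_1=T_1\,i(i+1)i\,T_2$.
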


\begin{proof}
Keep the associated shortened words
\[
R_1'=T_1\,i\,T_2,\qquad
R_2'=T_1\,(i+1)\,T_2,
\]
and the Demazure elements
\[
v_1=\Dem(R_1'),\qquad v_2=\Dem(R_2')
\]
as in \cref{lem:key-obs}.
By \cref{lem:hecke-expansion},
\[
F(R_1)-F(R_2)=\sum_{x\in S_n}\bigl(\wt_{R_1}(x)-\wt_{R_2}(x)\bigr)\Ttilde_x.
\]
On the other hand, \eqref{eq:braid-diff} gives
\[
F(R_1)-F(R_2)=q^{N-1}\bigl(\Ttilde_{v_1}-\Ttilde_{v_2}\bigr)+O(q^{N-2}).
\]
Since $\{\Ttilde_x\}_{x\in S_n}$ is a basis, comparing coefficients of $\Ttilde_x$ yields
\[
\wt_{R_1}(x)-\wt_{R_2}(x)=
\begin{cases}
q^{N-1}+O(q^{N-2}), & \text{if } x=v_1,\\
-q^{N-1}+O(q^{N-2}),& \text{if } x=v_2,\\
O(q^{N-2}), & \text{if } x\neq v_1,v_2.
\end{cases}
\]
Taking the coefficient of $q^{N-1}$ on both sides, we obtain
\begin{equation}\label{eq:D-transport}
D_{R_1}(x)-D_{R_2}(x)=
\begin{cases}
1, & \text{if } x=v_1,\\
-1,& \text{if } x=v_2,\\
0, & \text{if } x\neq v_1,v_2.
\end{cases}
\end{equation}
By \cref{def:Theta} and \eqref{eq:D-transport},
\[\begin{aligned}
    \Theta_{R_1}(d,e,f)-\Theta_{R_2}(d,e,f)
    &=\sum_{\substack{x\in S_n\\ \Supp(x)=[d,f]\cap\mathbb Z}}
    \bigl(D_{R_1}(x)-D_{R_2}(x)\bigr)\sigma_e(x)\\
    &=
    \mathbf{1}_{\Supp(v_1)=[d,f]\cap\mathbb Z}\,\sigma_e(v_1)
    -
    \mathbf{1}_{\Supp(v_2)=[d,f]\cap\mathbb Z}\,\sigma_e(v_2).
\end{aligned}\]
By \cref{lem:key-obs}, $\Supp(v_1)=\Supp(v_2)=[a,c]\cap\mathbb Z$. Hence if $(d,f)\neq(a,c)$, then both indicators vanish and the difference is $0$.

Now assume $(d,f)=(a,c)$. Then the above becomes
\[
\Theta_{R_1}(a,e,c)-\Theta_{R_2}(a,e,c)=\sigma_e(v_1)-\sigma_e(v_2).
\]
\Cref{lem:key-obs} shows that $\Fwd(v_1)$ and $\Fwd(v_2)$ differ exactly by the single element $b=u(i+1)$:
indeed $\Fwd(v_2)=\{u(j):j\le i,\ u(j)\in[a,c]\}$ while $\Fwd(v_1)=\{u(j):j\le i+1,\ u(j)\in[a,c]\}$,
so
\[
\Fwd(v_1)=\Fwd(v_2)\sqcup\{b\},\qquad \Bwd(v_2)=\Bwd(v_1)\sqcup\{b\}.
\]
Consequently, for $e\neq b$ we have $\sigma_e(v_1)=\sigma_e(v_2)$, hence $\Theta_{R_1}(a,e,c)-\Theta_{R_2}(a,e,c)=0$.
For $e=b$, we have $b\in\Fwd(v_1)$ and $b\in\Bwd(v_2)$, so $\sigma_b(v_1)=-1$ and $\sigma_b(v_2)=+1$, hence
\[
\Theta_{R_1}(a,b,c)-\Theta_{R_2}(a,b,c)=(-1)-(+1)=-2,
\]
which completes the proof.
\end{proof}

The preceding proposition affects exactly the same triple as the corresponding variation of \(T_R\) under a braid move, but with the opposite sign.

\begin{cor}\label{cor:theta-T-invariant-under-braid}
Retain the notation of the preceding proposition. Then for any
\((d,e,f)\in\mathcal{T}_n\),
\[
\Theta_{R_1}(d,e,f)-\Theta_{R_2}(d,e,f)
=
T_{R_2}(d,e,f)-T_{R_1}(d,e,f).
\]
Equivalently,
\[
\Theta_{R_1}(d,e,f)+T_{R_1}(d,e,f)
=
\Theta_{R_2}(d,e,f)+T_{R_2}(d,e,f).
\]
\end{cor}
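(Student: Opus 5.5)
This corollary is a direct combination of \cref{prop:theta-braid-variation} with \cref{prop:braid-flips-one-T}. I would compute both sides of the first displayed identity triple by triple and check that they agree. The second identity is then just a rearrangement.

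The first step is to pin down how \(T\) changes under the braid move. By \cref{prop:braid-flips-one-T}, \(T_{R_1}\) and \(T_{R_2}\) differ at exactly one triple. We must identify that triple as \((a,b,c)\), where \(a=u(i)\), \(b=u(i+1)\), \(c=u(i+2)\) and \(u=w(T_1)\). The prefix \(T_1\) and the suffix \(T_2\) are common to both words, so the creation times \(\tau\) of inversions outside the braid block agree (up to the fixed offset). Inside the block, the three new inversions created are exactly the pairs among \(\{a,b,c\}\). For \(R_1\) they appear in the order \((a,b),(a,c),(b,c)\), and for \(R_2\) in the reverse order \((b,c),(a,c),(a,b)\); this matches the wiring computation in \cref{ex:braid-one-triple}.

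From this, the relative order of \(\tau(d,e)\) and \(\tau(e,f)\) can change only if both pairs lie inside \(\{a,b,c\}\), which forces \((d,e,f)=(a,b,c)\). For that triple we get \(T_{R_1}(a,b,c)=+1\) and \(T_{R_2}(a,b,c)=-1\), in agreement with the orientation assumed in \cref{prop:theta-braid-variation}. Hence \(T_{R_2}(d,e,f)-T_{R_1}(d,e,f)\) equals \(-2\) when \((d,e,f)=(a,b,c)\) and \(0\) otherwise.

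The last step is to compare with \cref{prop:theta-braid-variation}, which gives exactly the same values for \(\Theta_{R_1}-\Theta_{R_2}\). Moving terms to the other side then yields the invariance of \(\Theta_R+T_R\) under the braid move.

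The only point needing care is making sure the orientation convention matches. The proposition fixes the orientation by assuming \(T_{R_1}(a,b,c)=+1\), and the \(\tau\)-order computation above confirms that this is automatic when \(R_1\) carries \(i(i+1)i\). If the roles were swapped, both sides of the identity change sign together, so nothing breaks.
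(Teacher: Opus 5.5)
Your proposal is correct and follows the paper's proof. Both arguments combine \cref{prop:braid-flips-one-T} with \cref{prop:theta-braid-variation} to see that $T_{R_2}-T_{R_1}$ and $\Theta_{R_1}-\Theta_{R_2}$ both equal $-2$ at $(a,b,c)$ and vanish at every other triple. The only difference is that you check directly from the creation orders $(a,b),(a,c),(b,c)$ versus $(b,c),(a,c),(a,b)$ that the flipped triple is $(a,b,c)$ and that the orientation hypothesis holds automatically, whereas the paper reads this off from that hypothesis together with the uniqueness in \cref{prop:braid-flips-one-T}.
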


\begin{proof}
    By \cref{prop:braid-flips-one-T} and the condition, we have
    \[
    T_{R_2}(d,e,f)-T_{R_1}(d,e,f)=
    \begin{cases}
    -2, & \text{if } (a,b,c)=(d,e,f),\\
    0,  & \text{if } (a,b,c)\ne(d,e,f),
    \end{cases}
    \]
    which is equal to $\Theta_{R_1}(d,e,f)-\Theta_{R_2}(d,e,f)$.
\end{proof}

The preceding corollary shows that the sum \(\Theta_R+T_R\) is unchanged under a single braid move. It is also unchanged under commutation moves, since both \(T_R\) and \(\Theta_R\) depend only on the commutation class of \(R\). Using the connectedness of the reduced-word graph, we obtain a quantity independent of the chosen reduced word.

\begin{cor}\label{cor:constant_sum}
For every \((d,e,f)\in \mathcal{T}_n\), the quantity
\[
\Theta_R(d,e,f)+T_R(d,e,f)
\]
is independent of \(R\in\Red(w_0)\). Hence there exists a constant \(C(d,e,f)\) such that
\[
\Theta_R(d,e,f)+T_R(d,e,f)=C(d,e,f).
\]
\end{cor}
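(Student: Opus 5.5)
The plan is to show that the function \(R\mapsto \Theta_R(d,e,f)+T_R(d,e,f)\) on \(\Red(w_0)\) is invariant under each elementary move, and then use the connectedness of \(G(w_0)\).

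First, invariance under a single commutation move. For \(T_R\), this was already observed in the proof of \cref{prop:commutation-classes-via-T}: swapping two adjacent commuting letters only interchanges the order of two consecutive events \(\tau_R\) whose pairs of values are disjoint. Hence no comparison \(\tau_R(a,b)<\tau_R(b,c)\) between pairs that share the value \(b\) changes. For \(\Theta_R\), note from \cref{def:Theta} that \(\Theta_R\) depends on \(R\) only through the coefficients \(D_R(x)=[q^{N-1}]\wt_R(x)\). By \cref{lem:hecke-expansion} these coefficients are read off from \(F(R)=\prod(1+T_{i_j})\). If \(|i-j|>1\), then \(T_iT_j=T_jT_i\) in the Hecke algebra, so \((1+T_i)(1+T_j)=(1+T_j)(1+T_i)\). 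Thus \(F(R)\) is unchanged by a commutation move, hence so are all \(\wt_R(x)\), all \(D_R(x)\), and \(\Theta_R\).

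Second, invariance under a single braid move. Suppose \(R_1=T_1\,i(i+1)i\,T_2\) and \(R_2=T_1\,(i+1)i(i+1)\,T_2\). After possibly swapping the names \(R_1\) and \(R_2\), we may assume \(T_{R_1}(a,b,c)=+1\) and \(T_{R_2}(a,b,c)=-1\) for the triple \((a,b,c)\) determined by \(u=w(T_1)\). We need to justify that this orientation is always achievable. By \cref{prop:braid-flips-one-T}, exactly one triple changes its \(T\)-value. In the proof of \cref{prop:theta-braid-variation}, the triple at which \(\Theta\) changes is \((a,b,c)=(u(i),u(i+1),u(i+2))\). So it must be checked that the unique flipped triple of \(T\) is this same triple.

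This check is the main point requiring care. The braid creates the three inversions among the values \(a,b,c\) consecutively, in the order \((a,b),(a,c),(b,c)\) for \(R_1\) and in the reverse order for \(R_2\); compare \cref{ex:braid-one-triple}. So \(T_{R_1}(a,b,c)=+1\) and \(T_{R_2}(a,b,c)=-1\), and \cref{prop:braid-flips-one-T} then says no other triple changes. The symmetric case, with the roles of the two sides exchanged, follows by relabeling. \Cref{cor:theta-T-invariant-under-braid} then gives equality of \(\Theta+T\) at every triple \((d,e,f)\), in either orientation, since the identity is symmetric in \(R_1\) and \(R_2\).

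Finally, take any \(R,R'\in\Red(w_0)\). By \cref{thm:Matsumoto}, they are joined by a finite chain of commutation and braid moves. The quantity \(\Theta+T\) is constant along each step of this chain, hence \(\Theta_R(d,e,f)+T_R(d,e,f)=\Theta_{R'}(d,e,f)+T_{R'}(d,e,f)\). Define \(C(d,e,f)\) as this common value, for instance evaluated at the standard word \(R_0\).
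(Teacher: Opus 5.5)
Your proposal is correct and matches the paper's proof step for step: \(T_R\) and \(\Theta_R\) are invariant under commutation moves (the latter because \(F(R)\) is unchanged), \(\Theta_R+T_R\) is invariant under braid moves by \cref{cor:theta-T-invariant-under-braid}, and \(G(w_0)\) is connected by \cref{thm:Matsumoto}. You also check directly that the side containing \(i(i+1)i\) always satisfies \(T_{R_1}(a,b,c)=+1\), so the orientation hypothesis of \cref{prop:theta-braid-variation} holds automatically; this is a worthwhile detail that the paper leaves implicit.
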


\begin{proof}
By \cref{prop:commutation-classes-via-T}, \(T_R\) depends only on the commutation class of \(R\), and hence is preserved under commutation moves.  The statistic \(\Theta_R\) is also preserved under commutation moves, since a commutation move does not change the Hecke product \(F(R)\), and therefore does not change the coefficients \(D_R(x)\).  By \cref{cor:theta-T-invariant-under-braid}, braid moves preserve the sum \(\Theta_R+T_R\).  Since the reduced-word graph \(G(w_0)\) is connected, the quantity \(\Theta_R(d,e,f)+T_R(d,e,f)\) is independent of \(R\).
\end{proof}

At this point the precise value of \(C(d,e,f)\) is not needed: its independence of \(R\) already suffices for the proof of the main theorem. We will determine this constant in \cref{subsec:constant-C}.


\subsection{Proof of the main theorem}\label{subsec:main-theorem-proof}

We first prove the theorem in the case of reduced words for the longest element.

\begin{proposition}\label{prop:w0-main}
Let \(R_1,R_2\in\Red(w_0)\). If \([R_1]\ne [R_2]\), then
\[
\mathbb P_{R_1,q}\ne \mathbb P_{R_2,q}
\]
as rational functions of $q$.
\end{proposition}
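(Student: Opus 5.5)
We argue by contraposition: assuming \(\mathbb P_{R_1,q}=\mathbb P_{R_2,q}\) as rational functions of \(q\), we show \([R_1]=[R_2]\). Since \(R_1,R_2\in\Red(w_0)\) have the same length \(N\), the normalizing denominators \((1+q)^N\) agree. Hence \cref{prop:probdist} gives \(\wt_{R_1}(x)=\wt_{R_2}(x)\) as polynomials in \(q\) for every \(x\in S_n\). Equivalently, by \cref{cor:hecke-characterization}, \(F(R_1)=F(R_2)\).

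In particular, the coefficients of \(q^{N-1}\) agree, so \(D_{R_1}(x)=D_{R_2}(x)\) for all \(x\in S_n\). By \cref{def:Theta}, \(\Theta_R(a,b,c)\) is a fixed linear combination of the numbers \(D_R(x)\): the coefficients \(\sigma_b(x)\) and the support condition \(\Supp(x)=[a,c]\cap\mathbb Z\) depend only on \(x\) and the triple, not on \(R\). Hence
\[
\Theta_{R_1}(a,b,c)=\Theta_{R_2}(a,b,c)\qquad\text{for all }(a,b,c)\in\mathcal T_n.
\]

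Next apply \cref{cor:constant_sum}. For every triple,
\[
T_{R_j}(a,b,c)=C(a,b,c)-\Theta_{R_j}(a,b,c),\qquad j=1,2.
\]
The constant \(C(a,b,c)\) is independent of the reduced word, so \(T_{R_1}(a,b,c)=T_{R_2}(a,b,c)\) for every \((a,b,c)\in\mathcal T_n\). Then \cref{prop:commutation-classes-via-T} gives \([R_1]=[R_2]\), contradicting the hypothesis, so the two distributions differ.

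Essentially all of the difficulty was absorbed earlier. The braid-variation computation in \cref{prop:theta-braid-variation} and the connectivity argument behind \cref{cor:constant_sum} do the real work, so no step here should be a genuine obstacle. The one point to check is that \(\Theta_R\) depends on \(R\) only through the coefficients \(D_R\), so that it is determined by the distribution. This is clear because \(\Theta_R\) is built from the coefficient of \(q^{N-1}\) in each \(\wt_R(x)\) with universal weights. It also matters that we never need the explicit value of \(C\), only that it does not depend on \(R\).
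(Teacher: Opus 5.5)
Your proof is correct and follows essentially the same route as the paper's; it is just run in contrapositive form. Equal distributions force equal coefficients $D_R$, hence equal $\Theta_R$, hence equal $T_R$ by \cref{cor:constant_sum}, and \cref{prop:commutation-classes-via-T} concludes. The paper runs the same chain forward, starting from a triple where $T_{R_1}\ne T_{R_2}$ and ending with some $\wt_{R_1}(x)\ne\wt_{R_2}(x)$.
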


\begin{proof}
By \cref{prop:commutation-classes-via-T}, there exists a triple
\((d,e,f)\in\mathcal{T}_n\) such that
\[
T_{R_1}(d,e,f)\ne T_{R_2}(d,e,f).
\]
On the other hand, by \cref{cor:constant_sum}, the quantity
\[
\Theta_R(d,e,f)+T_R(d,e,f)=C(d,e,f)
\]
is independent of \(R\). Hence
\[
\Theta_{R_1}(d,e,f)\ne \Theta_{R_2}(d,e,f).
\]

By the definition of \(\Theta_R\), this means that
\[
\sum_{\substack{x\in S_n\\ \Supp(x)=[d,f]\cap\mathbb Z}}
D_{R_1}(x)\sigma_e(x)
\ne
\sum_{\substack{x\in S_n\\ \Supp(x)=[d,f]\cap\mathbb Z}}
D_{R_2}(x)\sigma_e(x).
\]
Therefore there exists some \(x\in S_n\) such that
\[
D_{R_1}(x)\ne D_{R_2}(x).
\]
Since \(D_R(x)=[q^{N-1}]\wt_R(x)\), we have
\[
\wt_{R_1}(x)\ne \wt_{R_2}(x)
\]
as polynomials of $q$.
Thus \(F(R_1)\ne F(R_2)\). By \cref{cor:hecke-characterization}, this implies
\[
\mathbb P_{R_1,q}\ne \mathbb P_{R_2,q}.
\]
\end{proof}

The proposition proves the desired separation result for reduced words of the longest element $w_0$. We now return to arbitrary reduced words. The reduction in \cref{cor:reduction-w0} allows us to pass from the general case to the case of \(w_0\).

\begin{proof}[Proof of \cref{thm:main}]
If \(R_1\) and \(R_2\) are related by commutation moves, then the Hecke products
\[
\prod_{i\in R_1}(1+T_i)
\qquad\text{and}\qquad
\prod_{i\in R_2}(1+T_i)
\]
are equal. Hence \(\mathbb P_{R_1,q}=\mathbb P_{R_2,q}\) by \cref{cor:hecke-characterization}.

Conversely, suppose \(\mathbb P_{R_1,q}=\mathbb P_{R_2,q}\). By \cref{cor:reduction-w0}, it suffices to consider the case \(R_1,R_2\in\Red(w_0)\). In this case, \cref{prop:w0-main} implies that \([R_1]=[R_2]\). Hence \(R_1\) and \(R_2\) are related by commutation moves.
\end{proof}

This completes the proof of the main theorem. We next determine the constant \(C(d,e,f)\) appearing in \cref{cor:constant_sum}.

\subsection{The vanishing of the constant \texorpdfstring{\(C\)}{C}}
\label{subsec:constant-C}

Since the constant \(C(d,e,f)\) is independent of the chosen reduced word, it suffices to compute it for one convenient reduced word of \(w_0\). Recall from \cref{subsec:commutation-classes} the standard reduced word
\[
R_0=1(21)(321)\cdots((n-1)(n-2)\cdots1).
\]
Equivalently, write
\[
R_0=B_2B_3\cdots B_n,
\qquad
B_c=(c-1)(c-2)\cdots1.
\]

We first record the one-letter Demazure shortenings of \(R_0\).

\begin{lemma}\label{lem:R0-shortenings}
Let \(1\le a<c\le n\). Let \(R_0^{(a,c)}\) be the word obtained from \(R_0\) by deleting the letter \(c-a\) in the block \(B_c\). Equivalently, this is the letter whose application creates the inversion \((a,c)\) in the standard word \(R_0\). Define
\[
v_{a,c}:=\Dem(R_0^{(a,c)}).
\]
Then
\[
v_{a,c}
=
n(n-1)\cdots(c+1)\,(c-1)(c-2)\cdots a\,c\,(a-1)\cdots 1.
\]
Equivalently,
\[
w_0=(a\ a+1\ \cdots\ c)\,v_{a,c}.
\]
Consequently,
\[
\Supp(v_{a,c})=[a,c]\cap\mathbb Z,
\]
\[
\Fwd(v_{a,c})=\{a,a+1,\ldots,c-1\},
\qquad
\Bwd(v_{a,c})=\{c\}.
\]
\end{lemma}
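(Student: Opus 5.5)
We compute the Demazure product directly by tracking the "actual" row while applying \(R_0\) letter by letter, alongside the "nominal" row obtained from the full word \(R_0\).

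\textbf{Prefix.} The blocks \(B_2,\ldots,B_{c-1}\) are applied in full. The standard word \(B_2\cdots B_k\) carries the identity to \(k(k-1)\cdots 1\,(k+1)\cdots n\): block \(B_k\) takes the value \(k\), sitting at position \(k\), and moves it to the front, so \(B_k\) creates exactly the inversions \((j,k)\) for \(j<k\). Hence before \(B_c\) both rows equal
\[
(c-1)(c-2)\cdots 1\,c\,(c+1)\cdots n .
\]

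\textbf{Block \(B_c\).} Inside \(B_c=(c-1)(c-2)\cdots 1\), the letters move \(c\) leftward past \(1,2,\ldots,c-1\) in that order. The letter \(c-a\) is the one swapping \(c\) with \(a\), so it creates the inversion \((a,c)\). In the actual row we skip this letter.
\begin{itemize}
\item The letters before it have moved \(c\) to position \(c-a+1\), just right of \(a\).
\item After the skip, the remaining letters \(c-a-1,\ldots,1\) act at positions to the left. There the row reads \((c-1)\cdots(a+1)\,a\), which is decreasing, so every Demazure step is length-decreasing and is therefore skipped.
\end{itemize}
The actual row after \(B_c\) is therefore
\[
(c-1)\cdots a\,c\,(a-1)\cdots 1\,(c+1)\cdots n .
\]

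\textbf{Blocks \(B_{c+1},\ldots,B_n\).} Each block \(B_k\) with \(k>c\) pushes the value \(k\) from position \(k\) to the front. Every swap involves \(k\) and a smaller value standing to its left in increasing order (\(\text{smaller},\,k\)), so each step is length-increasing and is performed. The relative order of the values below \(k\) is untouched. After all blocks we obtain
\[
v_{a,c}=n(n-1)\cdots(c+1)\,(c-1)\cdots a\,c\,(a-1)\cdots 1 .
\]

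\textbf{Consequences.} Comparing \(v_{a,c}\) with \(w_0\) position by position:
\begin{itemize}
\item Values outside \([a,c]\) are fixed.
\item Each \(t\in[a,c-1]\) sits one position earlier than in \(w_0\).
\item The value \(c\) moves from position \(n-c+1\) to position \(n-a+1\).
\end{itemize}
This gives \(w_0=(a\ a+1\ \cdots\ c)\,v_{a,c}\), with the cycle acting on values as in Proposition~\ref{prop:cycle-description}. The statements about \(\Supp\), \(\Fwd\) and \(\Bwd\) then follow at once. As a cross-check, when the deleted letter lies in a braid, the result agrees with Proposition~\ref{prop:cycle-description} for \(B=\{c\}\).

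The only delicate step is verifying that the leftover letters of \(B_c\) are all skipped and that the later blocks never encounter a descent. Both reduce to knowing the monotone shape of the row at each moment, which the explicit description above supplies.
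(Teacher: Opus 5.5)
Your proof is correct and follows essentially the same route as the paper: you compute the row $(c-1)\cdots 1\,c\,(c+1)\cdots n$ before $B_c$, observe that after deleting the letter $c-a$ the remaining letters of $B_c$ act on the decreasing block $(c-1)\cdots a$ and are skipped, check that the later blocks $B_{c+1},\ldots,B_n$ are entirely length-increasing, and read off the cycle and $\Supp$, $\Fwd$, $\Bwd$ by comparing positions with $w_0$. The nominal-row framing and the cross-check against Proposition~\ref{prop:cycle-description} are harmless but not needed.
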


\begin{proof}
After applying the blocks \(B_2,\ldots,B_{c-1}\), the current permutation is
\[
(c-1)(c-2)\cdots 1\,c\,(c+1)\cdots n.
\]
In the full block \(B_c=(c-1)(c-2)\cdots 1\), the value \(c\) moves leftward across \(1,2,\ldots,c-1\). The letter \(c-a\) is precisely the step at which \(c\) would cross \(a\).

If this letter is deleted, then after the first \(a-1\) steps of \(B_c\), the value \(c\) remains immediately to the right of \(a\). The remaining letters in \(B_c\) would act on adjacent decreasing pairs, hence they are skipped in the Demazure product. Thus after the shortened block \(B_c\), the relative order of the values \(1,\ldots,c\) is
\[
(c-1)(c-2)\cdots a\,c\,(a-1)\cdots 1.
\]
The later blocks \(B_{c+1},\ldots,B_n\) move the larger values \(c+1,\ldots,n\) successively to the front. Each such move is length-increasing and therefore is taken by the Demazure product. Hence
\[
v_{a,c}
=
n(n-1)\cdots(c+1)\,(c-1)(c-2)\cdots a\,c\,(a-1)\cdots 1.
\]

Comparing this one-line notation with
\[
w_0=n(n-1)\cdots(c+1)\,c(c-1)\cdots a\,(a-1)\cdots 1
\]
shows that applying the cycle \((a\ a+1\ \cdots\ c)\) to the values of \(v_{a,c}\) gives \(w_0\). Thus
\[
w_0=(a\ a+1\ \cdots\ c)\,v_{a,c}.
\]

The displayed formula also immediately gives the position comparison with \(w_0\). The values outside \([a,c]\) occupy the same positions as in \(w_0\). For \(a\le t<c\), the value \(t\) is shifted one position to the left relative to \(w_0\), so \(t\in\Fwd(v_{a,c})\). The value \(c\) is shifted to the position occupied by \(a\) in \(w_0\), so \(c\in\Bwd(v_{a,c})\). Hence
\[
\Supp(v_{a,c})=[a,c]\cap\mathbb Z,
\qquad
\Fwd(v_{a,c})=\{a,a+1,\ldots,c-1\},
\qquad
\Bwd(v_{a,c})=\{c\}.
\]
\end{proof}

We now compute \(T_{R_0}\) and \(\Theta_{R_0}\).

\begin{proposition}\label{prop:R0-computation}
For every \((d,e,f)\in\mathcal{T}_n\), one has
\[
T_{R_0}(d,e,f)=1,
\qquad
\Theta_{R_0}(d,e,f)=-1.
\]
\end{proposition}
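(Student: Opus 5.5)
Two separate computations are needed, and both come from the block structure $R_0=B_2B_3\cdots B_n$.

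\emph{Computing $T_{R_0}$.} I will track when each inversion is created. After $B_2,\dots,B_{c-1}$ the permutation is $(c-1)\cdots1\,c\,(c+1)\cdots n$. During block $B_c$ the value $c$ travels leftward across $1,2,\dots,c-1$ in that order, so the inversions $(1,c),(2,c),\dots,(c-1,c)$ are created during $B_c$ in this order. Hence every pair $(a,b)$ with $b<c$ is created in an earlier block than $(b,c)$. This gives $\tau_{R_0}(a,b)<\tau_{R_0}(b,c)$ for every $a<b<c$, and therefore $T_{R_0}(d,e,f)=1$ for all triples.

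\emph{Computing $\Theta_{R_0}$.} By the Remark on the combinatorial meaning of $D_R$, $D_{R_0}(x)$ counts deletion positions $j$ with $\Dem$ of the shortened word equal to $x$. Each position of $R_0$ is exactly the letter creating a unique inversion $(a,c)$. By \cref{lem:R0-shortenings}, deleting it yields $v_{a,c}$, whose support is exactly $[a,c]\cap\mathbb Z$. The elements $v_{a,c}$ are pairwise distinct, since their supports differ. So $D_{R_0}(v_{a,c})=1$ for each pair $a<c$, and $D_{R_0}(x)=0$ for all other $x$. Consequently, in the sum defining $\Theta_{R_0}(d,e,f)$, the only $x$ with $\Supp(x)=[d,f]\cap\mathbb Z$ and nonzero coefficient is $x=v_{d,f}$. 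This gives $\Theta_{R_0}(d,e,f)=\sigma_e(v_{d,f})$. Since $d<e<f$, we have $e\in\{d,\dots,f-1\}=\Fwd(v_{d,f})$, so $\sigma_e(v_{d,f})=-1$.

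The main point requiring care is the bijection between deletion positions and pairs $(a,c)$, together with the distinctness of the resulting Demazure products. Both follow directly from the length count $\binom n2$ and the support formula in \cref{lem:R0-shortenings}. Combined with \cref{cor:constant_sum}, the two computations show $C(d,e,f)=0$.
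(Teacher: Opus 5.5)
Your proposal is correct and follows essentially the same route as the paper: you track block-by-block when each inversion is created to get $T_{R_0}\equiv 1$, and you use the one-letter shortening interpretation of $D_{R_0}$ together with \cref{lem:R0-shortenings} to isolate $v_{d,f}$ and get $\Theta_{R_0}(d,e,f)=\sigma_e(v_{d,f})=-1$. Your remark that the $v_{a,c}$ are pairwise distinct, so that $D_{R_0}(v_{d,f})=1$, makes explicit a step the paper leaves implicit.
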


\begin{proof}
First consider \(T_{R_0}(d,e,f)\). In the standard word \(R_0=B_2B_3\cdots B_n\), the block \(B_c\) creates the inversions
\[
(1,c),(2,c),\ldots,(c-1,c)
\]
in this order. Therefore the inversion \((d,e)\) is created in the block \(B_e\), while the inversion \((e,f)\) is created in the later block \(B_f\). Since \(e<f\), we have
\[
\tau_{R_0}(d,e)<\tau_{R_0}(e,f),
\]
and hence
\[
T_{R_0}(d,e,f)=1.
\]

We next compute \(\Theta_{R_0}(d,e,f)\). By the combinatorial interpretation of \(D_R(x)\), the coefficient \(D_{R_0}(x)\) counts one-letter Demazure shortenings of \(R_0\) with Demazure product \(x\). By \cref{lem:R0-shortenings}, deleting the letter corresponding to the inversion \((a,c)\) gives a Demazure product \(v_{a,c}\) with
\[
\Supp(v_{a,c})=[a,c]\cap\mathbb Z.
\]
Thus, in the definition
\[
\Theta_{R_0}(d,e,f)
=
\sum_{\substack{x\in S_n\\ \Supp(x)=[d,f]\cap\mathbb Z}}
D_{R_0}(x)\sigma_e(x),
\]
the only one-letter shortening which contributes is the deletion corresponding to \((d,f)\), namely \(v_{d,f}\). Therefore
\[
\Theta_{R_0}(d,e,f)=\sigma_e(v_{d,f}).
\]
Again by \cref{lem:R0-shortenings},
\[
\Fwd(v_{d,f})=\{d,d+1,\ldots,f-1\}.
\]
Since \(d<e<f\), we have \(e\in\Fwd(v_{d,f})\). Hence
\[
\sigma_e(v_{d,f})=-1,
\]
and therefore
\[
\Theta_{R_0}(d,e,f)=-1.
\]
\end{proof}

\begin{cor}\label{cor:C-vanishes}
For every \((d,e,f)\in\mathcal{T}_n\), one has
\[
C(d,e,f)=0.
\]
Equivalently, for every \(R\in\Red(w_0)\),
\[
\Theta_R(d,e,f)=-T_R(d,e,f).
\]
\end{cor}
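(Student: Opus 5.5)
The plan is to evaluate the constant of \cref{cor:constant_sum} on the single word \(R_0\). Since \(C(d,e,f)\) does not depend on the reduced word, it suffices to know \(\Theta_{R}+T_{R}\) for one \(R\in\Red(w_0)\).

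\begin{enumerate}
\item Fix \((d,e,f)\in\mathcal T_n\) and apply \cref{cor:constant_sum} with \(R=R_0\). This gives
\[
C(d,e,f)=\Theta_{R_0}(d,e,f)+T_{R_0}(d,e,f).
\]
\item Insert the values from \cref{prop:R0-computation}, namely \(T_{R_0}(d,e,f)=1\) and \(\Theta_{R_0}(d,e,f)=-1\). Their sum is \(0\), so \(C(d,e,f)=0\).
\item For an arbitrary \(R\in\Red(w_0)\), apply \cref{cor:constant_sum} again with this value of the constant. This yields \(\Theta_R(d,e,f)+T_R(d,e,f)=0\), which is the equivalent form \(\Theta_R=-T_R\).
\end{enumerate}

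All the substantive content has already been established earlier. \Cref{prop:theta-braid-variation} and \cref{prop:braid-flips-one-T} show that the sum \(\Theta_R+T_R\) is invariant under braid moves, and connectedness of \(G(w_0)\) turns this into independence of \(R\). For the base word, \cref{lem:R0-shortenings} ensures that exactly one one-letter shortening of \(R_0\) has support \([d,f]\cap\mathbb Z\), namely \(v_{d,f}\), and that \(e\in\Fwd(v_{d,f})\).

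The only point I would re-check is the uniqueness claim behind \(\Theta_{R_0}(d,e,f)=\sigma_e(v_{d,f})\). The deletion positions of \(R_0\) are in bijection with the pairs \((a,c)\), and \(\Supp(v_{a,c})=[a,c]\cap\mathbb Z\). Since different pairs give different intervals, only the pair \((a,c)=(d,f)\) has this support, so \(D_{R_0}\) contributes exactly once with sign \(-1\). With that verified, the corollary is a direct substitution.
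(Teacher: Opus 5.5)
Your proposal is correct and is the paper's proof: you evaluate the \(R\)-independent constant from \cref{cor:constant_sum} at \(R_0\), using \(T_{R_0}=1\) and \(\Theta_{R_0}=-1\) from \cref{prop:R0-computation}, and then substitute \(C=0\) back to get \(\Theta_R=-T_R\). Your re-check that only \(v_{d,f}\) has support \([d,f]\cap\mathbb Z\) is exactly the step the paper uses inside the proof of \cref{prop:R0-computation}.
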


\begin{proof}
By \cref{cor:constant_sum}, the quantity
\[
\Theta_R(d,e,f)+T_R(d,e,f)
\]
is independent of \(R\). Hence we may compute it at \(R=R_0\). By \cref{prop:R0-computation},
\[
C(d,e,f)
=
\Theta_{R_0}(d,e,f)+T_{R_0}(d,e,f)
=
(-1)+1
=
0.
\]
The equivalent statement follows immediately from \cref{cor:constant_sum}.
\end{proof}

Thus the constant introduced in \cref{cor:constant_sum} vanishes. In particular, the signed second-highest statistic \(\Theta_R\) recovers the commutation-class invariant \(T_R\) with the opposite sign.

\section{Hecke-theoretic remarks and counterexamples}
\label{sec:further-remarks}

This section records two supplementary observations.  First, we keep the
cell-representation argument which distinguishes a useful class of local braid
moves.  Second, we explain why the Temperley--Lieb quotient is too coarse to
replace the full Hecke-algebra element appearing in \Cref{cor:hecke-characterization}.

\subsection{Partial results in the Hecke algebra}
Consider the right cell $W^{(R)}(s_1)$ containing $C_{s_1}$.  By
\Cref{prop:Tw-basis} and \Cref{corr:5.4.2}, one has
$C_w\in W^{(R)}(s_1)$ if and only if $w\equiv_K s_1$.  A direct calculation gives
\[
W^{(R)}(s_1)=\{s_1,\ s_1s_2,\ s_1s_2s_3,\ldots,\ s_1s_2\cdots s_{n-1}\}.
\]
For $1\le r\le n-1$, set
\[
    w_r:=s_1s_2\cdots s_r.
\]
Thus the ordered basis of $W^{(R)}(s_1)$ is
\[
    \{C_{w_1},C_{w_2},\ldots,C_{w_{n-1}}\}.
\]

\begin{proposition}\label{prop:right-cell-matrices-compact}
Let \(A_j\) denote the matrix of the right multiplication operator
\[
    m_j: v\longmapsto v(1+T_j)
\]
on \(W^{(R)}(s_1)\), with respect to the ordered basis
\[
    \{C_{w_1},C_{w_2},\ldots,C_{w_{n-1}}\}.
\]
We use the convention that columns record the images of basis vectors.  Then the
matrices \(A_j=[1+T_j]_{W^{(R)}(s_1)}\) have the following block forms.

First,
{\renewcommand{\arraystretch}{1.25}\setlength{\arraycolsep}{10pt}
\[
A_1=
\begin{pmatrix}
\begin{matrix}
0 & q^{\frac12}\\
0 & q+1
\end{matrix}
& \mathbf{0}_{2\times(n-3)}\\[6pt]
\mathbf{0}_{(n-3)\times 2} & (q+1)I_{n-3}
\end{pmatrix}.
\]
}

For \(2\le j\le n-2\), write
\[
B_j:=
\begin{pmatrix}
q+1 & 0 & 0\\
q^{\frac12} & 0 & q^{\frac12}\\
0 & 0 & q+1
\end{pmatrix}.
\]
Then
{\renewcommand{\arraystretch}{1.25}\setlength{\arraycolsep}{10pt}
\[
A_j=
\begin{pmatrix}
(q+1)I_{j-2} & \mathbf{0}_{(j-2)\times 3} & \mathbf{0}_{(j-2)\times(n-j-2)}\\[6pt]
\mathbf{0}_{3\times(j-2)} & B_j & \mathbf{0}_{3\times(n-j-2)}\\[6pt]
\mathbf{0}_{(n-j-2)\times(j-2)} & \mathbf{0}_{(n-j-2)\times 3} & (q+1)I_{n-j-2}
\end{pmatrix}.
\]
}

Finally,
{\renewcommand{\arraystretch}{1.25}\setlength{\arraycolsep}{10pt}
\[
A_{n-1}=
\begin{pmatrix}
(q+1)I_{n-3} & \mathbf{0}_{(n-3)\times 2}\\[6pt]
\mathbf{0}_{2\times(n-3)} &
\begin{matrix}
q+1 & 0\\
q^{\frac12} & 0
\end{matrix}
\end{pmatrix}.
\]
}
\end{proposition}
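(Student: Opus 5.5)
The plan is to compute each column of \(A_j\) directly from the Kazhdan--Lusztig multiplication rule recalled in \cref{app:hecke}, viewing \(W^{(R)}(s_1)\) as the right cell module. This module is the span of the \(C_w\) with \(w\) in the cell, taken modulo the span of the \(C_z\) with \(z\) strictly below the cell in the right preorder.

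The rule I will use, in the normalization that the entry \(0\) in the first column of \(A_1\) forces (\(C_{s_1}(1+T_1)=0\)), is the following.
\begin{itemize}
\item If \(ws<w\), then \(C_w(1+T_s)=0\).
\item If \(ws>w\), then
\[
C_w(1+T_s)=(q+1)C_w+q^{\frac12}C_{ws}+q^{\frac12}\sum_{z<w,\ zs<z}\mu(z,w)\,C_z .
\]
\end{itemize}
I will first double-check this against the \(2\times 2\) block of \(A_1\). There, \(C_{w_2}(1+T_1)\) should give \((q+1)C_{w_2}+q^{\frac12}C_{w_1}\): the term \(C_{s_1s_2s_1}\) must die in the cell quotient, and the term \(\mu(s_1,s_1s_2)=1\) must contribute \(q^{\frac12}C_{w_1}\).

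With this rule, fix \(r\) and \(j\) and compute \(C_{w_r}(1+T_j)\) case by case. The only right descent of \(w_r=s_1\cdots s_r\) is \(s_r\), so the case \(j=r\) gives the zero column entry.

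For \(j\neq r\) we have \(w_rs_j>w_r\), so three contributions must be determined.
\begin{itemize}
\item The diagonal term is always \((q+1)C_{w_r}\).
\item The term \(q^{\frac12}C_{w_rs_j}\) survives in the quotient exactly when \(w_rs_j\) lies in the cell. By the explicit list of \(W^{(R)}(s_1)\), this happens only for \(j=r+1\), where \(w_rs_{r+1}=w_{r+1}\). For all other \(j\), the element \(w_rs_j\) has a right descent set different from that of the cell elements, so it lies strictly lower in the right preorder and its term vanishes in the quotient.
\item For the \(\mu\)-sum, only \(z\) in the cell matter modulo lower cells. So \(z=w_k\) with \(k<r\) and \(w_ks_j<w_k\), which forces \(k=j\).
\end{itemize}

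It remains to know \(\mu(w_k,w_r)\) for \(k<r\). All \(w_k\le w_r\) are initial subwords, and the interval \([w_k,w_r]\) is a chain-like Grassmannian interval, so \(P_{w_k,w_r}=1\). Hence \(\mu(w_k,w_r)=1\) if \(k=r-1\) and \(0\) otherwise. This gives the extra entry \(q^{\frac12}C_{w_{r-1}}\) precisely when \(j=r-1\).

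Assembling the cases gives the following columns.
\begin{itemize}
\item For \(j=r\), the column is zero.
\item For \(j=r+1\), the column is \((q+1)C_{w_r}+q^{\frac12}C_{w_{r+1}}\).
\item For \(j=r-1\), the column is \((q+1)C_{w_r}+q^{\frac12}C_{w_{r-1}}\).
\item Otherwise, the column is \((q+1)C_{w_r}\).
\end{itemize}
For \(2\le j\le n-2\), the three columns indexed by \(w_{j-1},w_j,w_{j+1}\) are exactly those of \(B_j\), and all others are \((q+1)\) times a unit vector. The extreme cases \(j=1\) and \(j=n-1\) are the truncations in which \(w_{j-1}\) or \(w_{j+1}\) does not exist, which yields the displayed corner blocks of \(A_1\) and \(A_{n-1}\).

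The main obstacle is justifying the two Kazhdan--Lusztig inputs. The first is that \(w_rs_j\) for \(j\notin\{r+1\}\) falls outside the cell, so that it is killed in the cell module. The second is the vanishing \(\mu(w_k,w_r)=0\) for \(r-k\ge 2\). For the first, I would use \Cref{prop:Tw-basis} and \Cref{corr:5.4.2}, via Robinson--Schensted recording tableaux or right descent sets, exactly as in the direct calculation listing \(W^{(R)}(s_1)\). For the second, I would use that \(w_r\) is a Coxeter-type element with smooth Schubert variety, so all \(P_{z,w_r}=1\).
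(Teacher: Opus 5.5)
Your proposal is correct and follows the paper's proof. Both evaluate $C_{w_r}(1+T_j)$ case by case from the Kazhdan--Lusztig right-multiplication rule and discard every term outside the cell as lying in $H(<_R s_1)$. You also supply the details the paper leaves implicit: that $w_rs_j\notin W^{(R)}(s_1)$ for $j\ne r+1$, and that $\mu(w_k,w_r)=0$ for $r-k\ge 2$ because $P_{w_k,w_r}=1$. Your normalization ($C_w(1+T_s)=0$ when $ws<w$, with the $q^{\frac12}$-weighted $C_{ws}$ and $\mu$-terms otherwise) is the one the paper's proof actually needs, even though the formula displayed in \cref{prop:KL-right-mult} is written in a different normalization that would not produce the zero column.
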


\begin{proof}
We compute the action of right multiplication by \(1+T_j\) on the basis vectors
\(C_{w_r}\) and then read off the matrices.  The computation uses the
multiplication formula in \Cref{prop:KL-right-mult}, working modulo the lower
right-cell terms \(H(<_R s_1)\).

First, if \(r=j\), then \(w_rs_j<w_r\).  Hence
\[
    C_{w_r}(1+T_j)=0
\]
in the quotient \(W^{(R)}(s_1)\).

Next suppose \(r=j-1\).  Then \(w_rs_j=w_{r+1}\), and the multiplication formula gives
\[
    C_{w_r}(1+T_j)
    \equiv (q+1)C_{w_r}+q^{\frac12}C_{w_{r+1}}
    \pmod{H(<_R s_1)}.
\]
Similarly, if \(r=j+1\), then
\[
    C_{w_r}(1+T_j)
    \equiv q^{\frac12}C_{w_{r-1}}+(q+1)C_{w_r}
    \pmod{H(<_R s_1)}.
\]
Finally, if \(|r-j|>1\), then all terms produced by \Cref{prop:KL-right-mult},
except the diagonal term \(C_{w_r}\), lie in \(H(<_R s_1)\).  Therefore
\[
    C_{w_r}(1+T_j)
    \equiv (q+1)C_{w_r}
    \pmod{H(<_R s_1)}.
\]
Thus, in \(W^{(R)}(s_1)\), we have
\[
C_{w_r}(1+T_j)=
\begin{cases}
0,& r=j,\\[4pt]
(q+1)C_{w_r}+q^{\frac12}C_{w_{r+1}},& r=j-1,\\[4pt]
q^{\frac12}C_{w_{r-1}}+(q+1)C_{w_r},& r=j+1,\\[4pt]
(q+1)C_{w_r},& |r-j|>1.
\end{cases}
\]
The three displayed matrix forms follow immediately from this formula, using the
column convention stated above.
\end{proof}

For a word \(R=i_1i_2\cdots i_m\), let
\[
    M(R):=
    A_{i_m}A_{i_{m-1}}\cdots A_{i_1}.
\]
Thus \(M(R)\) is the matrix of right multiplication by
\[
    (1+T_{i_1})(1+T_{i_2})\cdots(1+T_{i_m})
\]
on the right-cell representation \(W^{(R)}(s_1)\).  All kernels below are taken
over the field \(\mathbb Q(q^{\pm  1/2})\).

Define \(K(R)\subseteq\{1,2,\ldots,n-1\}\) to be the set of indices \(t\) such that,
using only commutation moves, the word \(R\) can be transformed into a word whose
first letter is \(t\).

\begin{lemma}\label{lem:kernel-K}
For every word \(R=i_1i_2\cdots i_m\), one has
\[
\ker M(R)\cap \{C_{w_t}:1\le t\le n-1\}
=
\{C_{w_t}:t\in K(R)\}.
\]
Here a basis vector \(C_{w_t}\) is identified with its coordinate vector in the cell
representation \(W^{(R)}(s_1)\).
\end{lemma}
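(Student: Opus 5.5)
The plan is to compute \(M(R)C_{w_t}=A_{i_m}\cdots A_{i_1}C_{w_t}\) by applying the operators \(A_{i_1},A_{i_2},\ldots\) in turn. At each stage we track only the \emph{support} of the current coordinate vector. The support is the set of \(r\) whose coefficient on \(C_{w_r}\) is nonzero.

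First, rewrite the case formula in the proof of \cref{prop:right-cell-matrices-compact} in one line, writing \(e_r\) for \(C_{w_r}\):
\[
A_je_j=0,\qquad A_je_r=(q+1)e_r+q^{\frac12}\,[\,|r-j|=1\,]\,e_j\quad(r\neq j).
\]
All matrix entries therefore lie in \(\mathbb Z_{\ge0}[q^{\frac12}]\). Starting from \(e_t\), every intermediate vector has coefficients that are polynomials in \(q^{\frac12}\) with nonnegative integer coefficients. Hence no cancellation can occur; one can see this by specializing \(q^{\frac12}\) to a positive real number. So the support after applying \(A_j\) to a vector with support \(S\) is determined combinatorially:
\begin{enumerate}
\item if \(j\in S\), the new support is \(S\setminus\{j\}\) when no neighbor \(j\pm1\) of \(j\) lies in \(S\), and \(S\) otherwise;
\item if \(j\notin S\), the new support is \(S\cup\{j\}\) when some neighbor \(j\pm1\) of \(j\) lies in \(S\), and \(S\) otherwise.
\end{enumerate}
In particular, \(M(R)e_t=0\) if and only if this support process, started at \(\{t\}\), reaches \(\varnothing\).

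Next I analyse the process from \(\{t\}\). While the support is \(\{t\}\), a letter \(j\notin\{t-1,t,t+1\}\) changes nothing. A letter \(t\) kills the vector, and the zero vector stays zero for the rest of the word. A letter \(t\pm1\) enlarges the support to the interval \(\{t,t\pm1\}\). From then on the support is a set of consecutive integers with at least two elements. Such a set only changes by adjoining an adjacent integer, because every element of it has a neighbor inside it, so rule (1) never deletes anything. Hence the support stays a nonempty interval forever. Therefore \(M(R)e_t=0\) if and only if the first letter of \(R\) lying in \(\{t-1,t,t+1\}\) exists and equals \(t\).

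Finally, I identify this condition with \(t\in K(R)\). If the first occurrence of \(t\) is preceded only by letters \(j\) with \(|j-t|>1\), then successive commutation moves bring that occurrence to the front. Conversely, suppose some \(t\pm1\) precedes every \(t\). Commutation moves never swap two letters that differ by \(1\), and they never swap two equal letters either (a commutation needs \(|i-j|>1\)). So that \(t\pm1\) stays ahead of every \(t\), and \(t\) never becomes the first letter. A cleaner alternative for this direction is to invoke \cref{lem:commutation-characterization} via the restriction \(R|_{\{t-1,t\}}\) or \(R|_{\{t,t+1\}}\), whose first letter is preserved by commutation moves.

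The main obstacle I expect is the rigor of the no-cancellation step. It has to be stated carefully over \(\mathbb Q(q^{\pm1/2})\): I would argue that every entry is a nonzero polynomial with nonnegative coefficients whenever the combinatorial rule predicts a nonzero entry, and a nonzero such polynomial is a nonzero element of the field.
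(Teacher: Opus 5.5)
Your proof is correct and uses the same ingredients as the paper's. A far letter acts on \(C_{w_t}\) by the scalar \(q+1\), the letter \(t\) kills it, and a letter \(t\pm1\) creates two adjacent nonzero coordinates, which the nonnegativity of the entries of the \(A_j\) keeps from ever cancelling. The difference is only in packaging: the paper inducts on \(m\) by peeling off \(i_1\) and using the recursion \(K(i_1R')=\{i_1\}\cup\bigl(K(R')\setminus\{i_1\pm1\}\bigr)\), whereas you unroll this into an explicit support process and characterize \(K(R)\) directly as the set of \(t\) for which the first letter of \(R\) in \(\{t-1,t,t+1\}\) is \(t\).
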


\begin{proof}
We prove the statement by induction on \(m\).  When \(m=1\), say \(R=i_1\),
\Cref{prop:right-cell-matrices-compact} gives
\[
    \ker A_{i_1}
    =
    \operatorname{Span}\{C_{w_{i_1}}\}.
\]
Since \(K(R)=\{i_1\}\), the claim follows.

Assume the statement holds for words of length \(m-1\), and write
\[
    R=i_1R',
    \qquad R'=i_2i_3\cdots i_m.
\]
From the definition of \(K(R)\), we have
\[
    K(R)=\{i_1\}\cup\bigl(K(R')\setminus\{i_1-1,i_1+1\}\bigr),
\]
where indices outside \(\{1,\ldots,n-1\}\) are ignored.

Let \(t\notin\{i_1-1,i_1,i_1+1\}\).  By
\Cref{prop:right-cell-matrices-compact},
\[
    A_{i_1}C_{w_t}=(q+1)C_{w_t}.
\]
Therefore
\[
\begin{aligned}
C_{w_t}\in\ker M(R)
&\Longleftrightarrow
M(R')A_{i_1}C_{w_t}=0\\
&\Longleftrightarrow
M(R')\bigl((q+1)C_{w_t}\bigr)=0\\
&\Longleftrightarrow
C_{w_t}\in\ker M(R')\\
&\Longleftrightarrow
t\in K(R'),
\end{aligned}
\]
where we used that \(q+1\) is nonzero, hence invertible, in
\(\mathbb Q(q^{1/2})\).  This proves the desired equivalence for all such \(t\).

Also,
\[
    A_{i_1}C_{w_{i_1}}=0,
\]
so \(C_{w_{i_1}}\in\ker M(R)\), matching the fact that \(i_1\in K(R)\).

It remains to show that, whenever the indices exist,
\[
    C_{w_{i_1-1}},\ C_{w_{i_1+1}}\notin\ker M(R).
\]
We prove the assertion for \(C_{w_{i_1+1}}\); the other case is identical.  By
\Cref{prop:right-cell-matrices-compact},
\[
A_{i_1}C_{w_{i_1+1}}
=
q^{\frac12}C_{w_{i_1}}+(q+1)C_{w_{i_1+1}}.
\]
Thus after the first multiplication, the coefficients of the two adjacent basis
vectors \(C_{w_{i_1}}\) and \(C_{w_{i_1+1}}\) are both nonzero polynomials with
nonnegative integer coefficients in \(q^{1/2}\).

We use the following elementary observation.  Suppose
\[
    v=\sum_{r=1}^{n-1} f_r(q^{1/2})C_{w_r},
    \qquad f_r\in\mathbb Z_{\ge0}[q^{1/2}],
\]
and suppose that, for some \(a\), both \(f_a\) and \(f_{a+1}\) are nonzero.  Then,
for any \(j\), the vector
\[
    A_jv
    =
    \sum_{r=1}^{n-1} g_r(q^{1/2})C_{w_r}
\]
has all \(g_r\in\mathbb Z_{\ge0}[q^{1/2}]\), and \(g_a,g_{a+1}\) are still both
nonzero.  Indeed, this follows directly from the four cases in
\Cref{prop:right-cell-matrices-compact}.  If \(j\notin\{a,a+1\}\), then \(g_a\)
contains the nonzero contribution \((q+1)f_a\), and \(g_{a+1}\) contains the
nonzero contribution \((q+1)f_{a+1}\).  If \(j=a\), then \(g_a\) receives the
nonzero contribution \(q^{1/2}f_{a+1}\), and \(g_{a+1}\) receives the nonzero
contribution \((q+1)f_{a+1}\).  If \(j=a+1\), then \(g_a\) receives the nonzero
contribution \((q+1)f_a\), and \(g_{a+1}\) receives the nonzero contribution
\(q^{1/2}f_a\).  Since all coefficients are nonnegative, no cancellation is
possible.

Applying this observation successively to the remaining factors
\(A_{i_2},\ldots,A_{i_m}\), we see that the resulting vector is nonzero.  Hence
\(C_{w_{i_1+1}}\notin\ker M(R)\).  The same argument with the adjacent pair
\((i_1-1,i_1)\) proves \(C_{w_{i_1-1}}\notin\ker M(R)\).  The induction is
complete.
\end{proof}

\begin{theorem}\label{thm:short-local-braid-separation}
Let \(R_1\) and \(R_2\) be reduced words.  Suppose that, for some \(i\) and some
word \(Q\) of length \(0\) or \(1\), the words begin as
\[
    R_1=Q\,i\,(i+1)\,i\,R,
    \qquad
    R_2=Q\,(i+1)\,i\,(i+1)\,R
\]
for the same remaining word \(R\).  Then
\[
    \mathbb P_{R_1,q}\ne\mathbb P_{R_2,q}.
\]
\end{theorem}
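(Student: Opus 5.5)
The plan is to detect the difference between \(R_1\) and \(R_2\) through the right-cell representation \(W^{(R)}(s_1)\), using \cref{lem:kernel-K}. Suppose, for contradiction, that \(\mathbb P_{R_1,q}=\mathbb P_{R_2,q}\). Since \(L(R_1)=L(R_2)\), \cref{cor:hecke-characterization} gives \(F(R_1)=F(R_2)\) in \(H_n(q)\).

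Right multiplication by a Hecke element descends to the cell module \(W^{(R)}(s_1)\), which is a quotient of the right ideal modulo \(H(<_R s_1)\). Hence the operators of right multiplication by \(F(R_1)\) and by \(F(R_2)\) agree on this module, that is, \(M(R_1)=M(R_2)\). Their kernels then coincide, and by \cref{lem:kernel-K} we get \(K(R_1)=K(R_2)\). So it suffices to show that \(K(R_1)\ne K(R_2)\).

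The main tool is the recursion from the proof of \cref{lem:kernel-K}:
\[
K(i_1R')=\{i_1\}\cup\bigl(K(R')\setminus\{i_1-1,i_1+1\}\bigr).
\]
Write \(R_1'=i\,(i+1)\,i\,R\) and \(R_2'=(i+1)\,i\,(i+1)\,R\).

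\emph{Case \(Q\) empty.} Here \(R_1=R_1'\) and \(R_2=R_2'\). The recursion gives \(i\in K(R_1)\). For \(R_2\), the first letter is \(i+1\), which removes \(i\), so \(i\notin K(R_2)\). Hence \(K(R_1)\ne K(R_2)\).

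\emph{Case \(Q=j\).} The recursion gives \(K(R_\epsilon)=\{j\}\cup\bigl(K(R'_\epsilon)\setminus\{j-1,j+1\}\bigr)\) for \(\epsilon=1,2\). By the first case, \(i\in K(R_1')\setminus K(R_2')\). Symmetrically, \(i+1\in K(R_2')\setminus K(R_1')\), because the first letter \(i\) of \(R_1'\) removes \(i+1\).
\begin{itemize}
\item If \(i\notin\{j-1,j,j+1\}\), then \(i\) survives the prepending of \(j\) on the \(R_1\) side and is absent on the \(R_2\) side.
\item If \(i+1\notin\{j-1,j,j+1\}\), the same argument applies to \(i+1\) with the roles of \(R_1\) and \(R_2\) exchanged.
\end{itemize}

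Both alternatives fail only when \(\{i,i+1\}\subseteq\{j-1,j,j+1\}\), that is, when \(j=i\) or \(j=i+1\). These are exactly the cases where reducedness must be used.
\begin{itemize}
\item If \(j=i\), then \(R_1\) begins with \(i\,i\), so it is not reduced.
\item If \(j=i+1\), then \(R_2\) begins with \((i+1)(i+1)\), so it is not reduced.
\end{itemize}
Both are excluded by hypothesis, so in every remaining case \(K(R_1)\ne K(R_2)\), which gives the contradiction.

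The step I expect to need the most care is the passage from \(F(R_1)=F(R_2)\) to \(M(R_1)=M(R_2)\). One must check that the matrices \(A_j\) of \cref{prop:right-cell-matrices-compact} really represent a well-defined right action on the quotient \(W^{(R)}(s_1)\), so that the operator \(M(R)\) depends only on \(F(R)\). Once that is in place, the rest is the bookkeeping with \(K\) above, together with the small case check that reducedness rules out \(j\in\{i,i+1\}\).
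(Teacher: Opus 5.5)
Your proposal is correct and is essentially the paper's proof. You pass from equal distributions to $F(R_1)=F(R_2)$ via \cref{cor:hecke-characterization}, hence to equal right-multiplication operators on $W^{(R)}(s_1)$, and then show $K(R_1)\ne K(R_2)$ using \cref{lem:kernel-K}, with reducedness used only to exclude $j\in\{i,i+1\}$. Your single criterion (use $i$ when $i\notin\{j-1,j,j+1\}$, otherwise $i+1$) repackages the paper's three subcases ($j$ commuting with both $i$ and $i+1$, $j=i-1$, $j=i+2$), and your remark that right multiplication descends to the cell quotient makes explicit a point the paper leaves implicit.
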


\begin{proof}
By \Cref{cor:hecke-characterization}, it is enough to prove that the two Hecke
products
\[
    F(R_1)
    \qquad\text{and}\qquad
    F(R_2)
\]
are different.  It is therefore enough to show that their images in the right-cell
representation \(W^{(R)}(s_1)\) are different.

By \Cref{lem:kernel-K}, the intersection of the kernel of the corresponding
right-cell operator with the basis vectors \(\{C_{w_t}:1\le t\le n-1\}\) records
exactly the set \(K(\cdot)\).

If \(Q\) is empty, then \(i\in K(R_1)\), but \(i\notin K(R_2)\), since in \(R_2\)
the letter \(i\) is blocked by the initial letter \(i+1\).  Hence
\(K(R_1)\ne K(R_2)\).

Now suppose \(Q\) has length one, say \(Q=j\).  Since \(R_1\) and \(R_2\) are both
reduced, \(j\notin\{i,i+1\}\).  If \(j\) commutes with both \(i\) and \(i+1\), then
the same argument as above gives \(i\in K(R_1)\) and \(i\notin K(R_2)\).  If
\(j=i-1\), then \(i+1\in K(R_2)\), because \(i+1\) commutes with \(i-1\), whereas
\(i+1\notin K(R_1)\), because it is blocked by the preceding letter \(i\).  If
\(j=i+2\), the symmetric argument gives \(i\in K(R_1)\), but \(i\notin K(R_2)\).
These are all possible noncommuting cases for a one-letter \(Q\).

Thus \(K(R_1)\ne K(R_2)\) in every case.  By \Cref{lem:kernel-K}, the two
right-cell operators have different kernels on the displayed basis vectors, so
they are different operators.  Consequently the two Hecke products are different.
Using \Cref{cor:hecke-characterization}, we obtain
\[
    \mathbb P_{R_1,q}\ne\mathbb P_{R_2,q}.
\]
\end{proof}

\subsection{Counterexample in the Temperley--Lieb algebra}
We now turn to a smaller quotient of the Hecke algebra.  Let $A=\mathbb Z[q^{\pm \frac{1}{2}}]$.
The Temperley--Lieb algebra $\mathrm{TL}_n(q+1)$ over $A$ is generated by
\[
    U_1,U_2,\ldots,U_{n-1}
\]
subject to
\[
\begin{aligned}
U_i^2&=(q+1)U_i, &&1\le i\le n-1,\\
U_iU_{i\pm1}U_i&=U_i, &&1\le i\le n-2,\\
U_iU_j&=U_jU_i, &&|i-j|>1.
\end{aligned}
\]
The Temperley--Lieb algebra plays an important role in low-dimensional topology and mathematical physics, particularly through its connections with knot invariants, statistical mechanics, and quantum spin systems.

We note that there is a natural quotient map
\[
    \phi:H_n(q)\twoheadrightarrow \mathrm{TL}_n(q+1),
    \qquad
    \phi(T_i)=U_i-1,
\]
or equivalently
\[
    \phi(1+T_i)=U_i.
\]
Thus, for a word $R=i_1i_2\cdots i_L$, the image of the Hecke product
\[
    \Phi_R:=\prod_{r=1}^{L}(1+T_{i_r})
\]
is simply
\[
    \phi(\Phi_R)=U_{i_1}U_{i_2}\cdots U_{i_L}.
\]

The following example shows that this quotient loses information which is
relevant to the distribution $\mathbb P_{R,q}$.

\begin{ex}\label{ex:TL-counterexample}
In $S_8$, consider the two reduced words
\[
\begin{aligned}
R_1={}&1\,3\,4\,3\,2\,3\,5\,4\,5\,3\,1\,2\,1\,6\,7\,5\,6\,5\,4\,3\,4\,2,\\
R_2={}&1\,3\,4\,3\,2\,3\,5\,4\,5\,3\,2\,1\,2\,6\,7\,5\,6\,5\,4\,3\,4\,2.
\end{aligned}
\]
They differ by the braid move $121\leftrightarrow 212$ in the middle.  Hence
they are not related by commutation moves alone.  However, in
$\mathrm{TL}_8(q+1)$ one has
\[
\begin{aligned}
&U_1U_3U_4U_3U_2U_3U_5U_4U_5U_3U_1U_2U_1U_6U_7U_5U_6U_5U_4U_3U_4U_2\\
={}&U_1U_3U_4U_3U_2U_3U_5U_4U_5U_3U_2U_1U_2U_6U_7U_5U_6U_5U_4U_3U_4U_2.
\end{aligned}
\]
Equivalently,
\[
    \phi(\Phi_{R_1})=\phi(\Phi_{R_2}).
\]
Thus the Temperley--Lieb quotient cannot distinguish these two reduced words,
although the full Hecke-algebra product does distinguish commutation classes by
\Cref{thm:main} and \Cref{cor:hecke-characterization}.
\end{ex}

\bigskip

\section{Further directions}\label{sec:further-directions}

Our main theorem shows that the family of distributions
\[
\{\mathbb P_{R,q}:q\in \mathbb Z^+\}
\]
remembers exactly the commutation class of a reduced word \(R\). In other words, if two reduced words \(R_1\) and \(R_2\) induce the same distribution for every positive integer \(q\), then they must be related by commutation moves only.

It is natural to ask whether the same conclusion already holds after specializing \(q\) to a single value. A particularly interesting specialization is \(q=1\). Under this specialization, the Hecke algebra \(H_n(q)\) degenerates to the group algebra \(\mathbb Z[S_n]\). Indeed, in the usual normalization, the quadratic relation
\[
T_i^2=(q-1)T_i+q
\]
becomes
\[
T_i^2=1
\]
when \(q=1\). Thus the Hecke-algebraic expression controlling \(\mathbb P_{R,q}\) specializes to an expression inside the ordinary group algebra of \(S_n\).

This suggests the following question. Although the full one-parameter family \(\mathbb P_{R,q}\) distinguishes commutation classes, it is not clear whether a single specialization \(\mathbb P_{R,q_0}\) should still contain enough information to do so.

\begin{quest}\label{quest:fixed-q}
Does there exist a positive integer \(q\), and two reduced words \(R_1,R_2\) of the same permutation, such that \(R_1\) and \(R_2\) are not in the same commutation class but
\[
\mathbb P_{R_1,q}=\mathbb P_{R_2,q}?
\]
Equivalently, does equality of the distributions at a fixed value of \(q\) force equality of commutation classes?
\end{quest}

The case \(q=1\) is especially natural, since it corresponds to the degeneration from the Hecke algebra to the group algebra. Understanding this specialization may clarify how much of the commutation-class information is genuinely Hecke-theoretic, and how much is already visible in the ordinary group algebra \(\mathbb Z[S_n]\).

\section*{Acknowledgements}

This work was carried out during the PACE Undergraduate Research Program in Algebraic Combinatorics at Peking University. We are deeply grateful to Shiliang Gao for his guidance, insightful discussions, and helpful observations throughout the project. We also thank the organizers of PACE for providing this research opportunity.

\appendix

\section{Hecke algebra and cell representations}\label{app:hecke}

Let \(R=\mathbb Z[v,v^{-1}]\), where \(v=q^{1/2}\) is a formal parameter with \(q\) an indeterminate, and let
\[
\mathcal{S} = \{ s_i \mid 1 \le i < n \}
\]
denote the simple transpositions in the symmetric group $S_n$.
The Hecke algebra $H_n(q)$ of type~$A_{n-1}$ is the unital associative $R$--algebra generated by elements $T_i$ $(1 \le i < n)$ subject to the braid relations and the quadratic relation:
\begin{align}
T_i T_j &= T_j T_i, && \text{if } |i-j| \ge 2, \label{eq:braid-comm}\\
T_i T_{i+1} T_i &= T_{i+1} T_i T_{i+1}, \label{eq:braid}\\
T_i^2 &= (q - 1) T_i + q. \label{eq:quadratic}
\end{align}
In particular, when $q = 1$ the algebra $H_n(q)$ specializes to the group algebra $\mathbb{Z}[S_n]$.

For \(w\in S_n\), choose any reduced expression $w = s_{i_1} \cdots s_{i_\ell}$ and set
\[
T_w := T_{i_1} \cdots T_{i_\ell},
\]
which is well-defined by the braid relations \eqref{eq:braid-comm}--\eqref{eq:braid}.

\begin{proposition}\label{prop:Tw-basis}
The elements $\{ T_w \}_{w \in {S}_n}$ form an $A$--basis of $H_n(q)$.
\end{proposition}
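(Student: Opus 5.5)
We follow the standard argument, as in \cite[Chapter~7]{Humphreys1990}. It has two halves: \emph{spanning}, which is formal, and \emph{linear independence}, which needs a model representation. We read the ground ring as \(\mathbb Z[v,v^{-1}]\) with \(v=q^{1/2}\). Throughout, \(T_w\) is well defined by \cref{thm:Matsumoto} together with the relations \eqref{eq:braid-comm}--\eqref{eq:braid}.

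\emph{Spanning.} Let \(M\) be the submodule spanned by \(\{T_w\}_{w\in S_n}\). It contains \(1=T_e\), so it suffices to show \(M\,T_i\subseteq M\) for every \(i\); then \(M\) is a right ideal containing \(1\), hence \(M=H_n(q)\).
\begin{itemize}
\item If \(\ell(ws_i)>\ell(w)\), then concatenating a reduced word for \(w\) with \(i\) gives a reduced word for \(ws_i\), so \(T_wT_i=T_{ws_i}\).
\item If \(\ell(ws_i)<\ell(w)\), write \(w=(ws_i)s_i\) with lengths adding, so \(T_w=T_{ws_i}T_i\). The quadratic relation \eqref{eq:quadratic} then gives
\[
T_wT_i=T_{ws_i}T_i^2=(q-1)T_w+qT_{ws_i}.
\]
\end{itemize}

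\emph{Independence.} Let \(E\) be the free module with basis \(\{e_w\}_{w\in S_n}\). For each \(i\), define a right-type operator \(\rho_i\) on \(E\) by the same two-case formula: \(\rho_i e_w=e_{ws_i}\) if \(\ell(ws_i)>\ell(w)\), and \(\rho_i e_w=(q-1)e_w+qe_{ws_i}\) otherwise. Define a left-type operator \(\lambda_i\) analogously, using \(s_iw\) in place of \(ws_i\).

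The plan is to show that the \(\rho_i\) satisfy \eqref{eq:braid-comm}--\eqref{eq:quadratic}. Granting this, \(T_i\mapsto\rho_i\) defines a homomorphism from \(H_n(q)\) to \(\operatorname{End}(E)\), under which \(T_w\) is sent to an operator mapping \(e_e\) to \(e_w\); here one inducts on a reduced word, where only the length-increasing case of \(\rho_i\) occurs. Any relation \(\sum c_wT_w=0\) applied to \(e_e\) then gives \(\sum c_we_w=0\), so all \(c_w=0\).

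\emph{The quadratic relation} is a direct \(2\times2\) check on \(\operatorname{span}\{e_w,e_{ws_i}\}\).

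\emph{The braid and commutation relations} are the main obstacle. We avoid checking them directly and instead argue as follows.
\begin{enumerate}
\item First prove \(\lambda_i\rho_j=\rho_j\lambda_i\) for all \(i,j\), by case analysis on the lengths of \(w\), \(s_iw\), \(wt_j\) and \(s_iwt_j\), where \(t_j=s_j\). The only delicate case is \(s_iw=ws_j\) with the same length behaviour on both sides. This is handled by the exchange/deletion property: in that situation \(\ell(s_iws_j)=\ell(w)\) is forced, and one computes both sides explicitly.
\item Let \(\mathcal R\) be the subalgebra generated by the \(\rho_j\). Any \(\varphi\in\mathcal R\) commutes with all \(\lambda_i\). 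Since \(e_w=\lambda_{i_1}\cdots\lambda_{i_\ell}e_e\) for a reduced word \(i_1\cdots i_\ell\) of \(w\), such a \(\varphi\) is determined by \(\varphi(e_e)\).
\item Apply this to the two sides of each braid or commutation relation among the \(\rho_j\). Both sides send \(e_e\) to \(e_u\), where \(u\) is the common element \(s_is_{i+1}s_i=s_{i+1}s_is_{i+1}\) (respectively \(s_is_j=s_js_i\)), because only the length-increasing branch is used. Hence the two sides agree.
\end{enumerate}
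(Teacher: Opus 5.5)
The paper does not prove this statement; it is recalled without proof in the appendix as standard background. Your argument is the standard generic-algebra proof of \cite[Chapter~7]{Humphreys1990}, mildly reorganized. Rather than transporting an algebra structure onto $E$ from the algebra generated by the $\lambda_i$ and then deriving the presentation, you check directly that the $\rho_i$ satisfy the defining relations. The tool is that anything in the algebra generated by the $\rho_j$ commutes with the $\lambda_i$, and is therefore determined by its value on $e_e$. That gives a representation of the presented algebra $H_n(q)$ with little bookkeeping, and your spanning argument is fine. (The same evaluation trick also gives the quadratic relation, since $\rho_i^2$ and $(q-1)\rho_i+q$ both send $e_e$ to $(q-1)e_{s_i}+qe_e$.)

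Step (1) needs correcting, because as written the implication runs the wrong way. If $s_iw=ws_j$, then $s_iws_j=w$, so $\ell(s_iws_j)=\ell(w)$ is trivial. What the exchange property must supply is the converse: if $\ell(s_iw)>\ell(w)$, $\ell(ws_j)>\ell(w)$ and $\ell(s_iws_j)=\ell(w)$, then $s_iw=ws_j$. Without this, that case yields
\[
\lambda_i\rho_je_w=(q-1)e_{ws_j}+qe_{s_iws_j},
\qquad
\rho_j\lambda_ie_w=(q-1)e_{s_iw}+qe_{s_iws_j},
\]
which agree only because $s_iw=ws_j$.

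To prove it, take a reduced word $w=s_{k_1}\cdots s_{k_m}$. Then $s_is_{k_1}\cdots s_{k_m}$ is reduced and right multiplication by $s_j$ shortens it, so $s_iws_j$ is obtained by deleting one letter. Deleting some $s_{k_p}$ would give $ws_j=s_{k_1}\cdots\widehat{s_{k_p}}\cdots s_{k_m}$, contradicting $\ell(ws_j)>\ell(w)$. Hence the deleted letter is $s_i$, and $s_iws_j=w$. The case where both lengths drop and $\ell(s_iws_j)=\ell(w)$ follows by applying this to $w'=s_iw$. Every other length pattern is a direct computation with no need to compare $s_iw$ and $ws_j$. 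This covers both up or both down with $\ell(s_iws_j)=\ell(w)\pm2$, and the mixed patterns, where $\ell(s_iws_j)=\ell(w)$ is automatic.

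A smaller slip: under ordinary composition, $T_w=T_{i_1}\cdots T_{i_\ell}$ maps to $\rho_{i_1}\circ\cdots\circ\rho_{i_\ell}$. This applies $\rho_{i_\ell}$ first and sends $e_e$ to $e_{w^{-1}}$, not $e_w$. It does no harm, for either of two reasons. The defining relations are invariant under reversing words, so you may view the $\rho_i$ as right operators, i.e.\ a homomorphism to $\operatorname{End}(E)^{\mathrm{op}}$. Alternatively, $\sum c_we_{w^{-1}}=0$ still forces all $c_w=0$.
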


Right multiplication by the generators satisfies the multiplication rule
\begin{equation}\label{eq:TwTj}
T_w T_{s_j} =
\begin{cases}
T_{ws_j}, & \text{if } ws_j > w \text{ in the Bruhat order},\\[4pt]
(q - 1) T_w + q\,T_{ws_j}, & \text{if } ws_j < w,
\end{cases}
\end{equation}
and an analogous identity holds for left multiplication.

There are two elementary one-dimensional representations (the $q$--analogues of the trivial and sign characters): for all $w\in{S}_n$,
\[
\rho_{\mathrm{triv}}(T_w)=q^{\ell(w)},\qquad \rho_{\mathrm{sgn}}(T_w)=(-1)^{\ell(w)}.
\]

\subsection*{The Kazhdan--Lusztig basis}
Let $\overline{\cdot}\colon A\to A$ be the ring involution given by $q^{\frac{1}{2}}\mapsto q^{-\frac{1}{2}}$, and extend it semilinearly to an involution $\iota$ on $H_n(q)$. There is also an anti-involution $\star$ determined by $T_{s_i}^\star=T_{s_i}$ (equivalently $T_w^\star=T_{w^{-1}}$).

Introduce the renormalized basis $\widehat T_w := q^{-\ell(w)/2} T_w$.
The Kazhdan--Lusztig theorem asserts the existence and uniqueness of a basis $\{C_w\}_{w\in{S}_n}$ of $H_n(q)$ characterised by:
\begin{itemize}
  \item $\iota(C_w)=C_w$ (self-duality under the bar-involution);
  \item $C_w \in \widehat T_w + \sum_{y<w} q^{1/2}\mathbb{Z}[q^{1/2}]\,\widehat T_y$,
\end{itemize}
where $<$ denotes Bruhat order. Equivalently,
\[
C_w=\widehat T_w+\sum_{y<w} a_{y,w}(q)\,\widehat T_y,\qquad a_{y,w}(q)\in q^{1/2}\mathbb{Z}[q^{1/2}],
\]
and the coefficients can be packaged as \emph{Kazhdan--Lusztig polynomials} $P_{y,w}(q)\in\mathbb{Z}[q]$ with $P_{w,w}=1$ and $\deg P_{y,w}\le\frac{1}{2}(\ell(w)-\ell(y)-1)$ for $y<w$.
See \cite{williamson2003mind} for the construction.

\begin{defin}\label{def:mu}
For $a,b\in S_n$ with $a\le b$, let $\mu(a,b)$ denote the coefficient of the top-degree term
in $P_{a,b}(q)$, i.e.
\[
P_{a,b}(q)=\mu(a,b)\,q^{\frac{1}{2}(\ell(b)-\ell(a)-1)}+\text{(lower-degree terms)}.
\]
By convention $P_{a,b}=0$ (and $\mu(a,b)=0$) if $a\not\le b$.
We write $a \prec b$ if $a < b$ and $\mu(a,b) \neq 0$.
\end{defin}

\begin{proposition}[Multiplication formulae {\cite{williamson2003mind}}]\label{prop:KL-right-mult}
Let $s \in \mathcal{S}$ be a simple reflection and $w \in S_n$. Then
\[
C_w \, T_s =
\left\{
\begin{array}{ll}
  \rule{0pt}{4ex} 
  -\,q^{-\frac{1}{2}} \, C_w,
    & \text{if } ws < w, \\[6pt]
  \rule{0pt}{6ex} 
  q^{\frac{1}{2}} \, C_w + C_{ws}
  + \displaystyle\sum_{y \in S_n,\, y \prec ws,\, ys < y}
    \mu(y, ws) \, C_y,
    & \text{if } ws > w.
\end{array}
\right.
\]
An analogous formula holds for left multiplication:
\[
T_s \, C_w =
\left\{
\begin{array}{ll}
  \rule{0pt}{4ex}
  -\,q^{-\frac{1}{2}} \, C_w,
    & \text{if } sw < w, \\[6pt]
  \rule{0pt}{6ex}
  q^{\frac{1}{2}} \, C_w + C_{sw}
  + \displaystyle\sum_{y \in S_n,\, y \prec sw,\, sy < y}
    \mu(y, sw) \, C_y,
    & \text{if } sw > w.
\end{array}
\right.
\]
\end{proposition}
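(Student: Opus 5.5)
The statement is the Kazhdan--Lusztig multiplication formula, quoted from \cite{williamson2003mind}. It is an appendix recollection, so the natural plan is to verify it from the defining properties of \(C_w\). I will work in the normalization \(\widehat T_w=q^{-\ell(w)/2}T_w\). I will use the generator \(\widehat T_s=q^{-1/2}T_s\), which satisfies \((\widehat T_s-q^{1/2})(\widehat T_s+q^{-1/2})=0\). The element \(C_s=\widehat T_s+q^{1/2}\) is bar-invariant and of the required triangular form, so by uniqueness it is the KL element for \(s\). Writing \(T_s=q^{1/2}\widehat T_s\) converts the displayed formula into the standard one,
\[
C_wC_s=\begin{cases}(q^{1/2}+q^{-1/2})C_w, & ws<w,\\ C_{ws}+\sum_{y\prec w,\ ys<y}\mu(y,w)C_y, & ws>w,\end{cases}
\]
up to the bookkeeping of which of \(w\) or \(ws\) indexes \(\mu\). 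I will first fix conventions so that the cited statement and the paper's \(C_s\) agree, and record any normalization discrepancy.

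\emph{Case \(ws<w\).} I will show that \(C_w\) lies in the right ideal \(H\,C_s\). Equivalently, I will show that the coefficients of \(C_w\) in the \(\widehat T\)-basis satisfy \(a_{ys,w}=q^{1/2}a_{y,w}\)-type relations. The route is induction on \(\ell(w)\), using the second case applied to \(ws\) together with the quadratic relation \(C_s^2=(q^{1/2}+q^{-1/2})C_s\). Once \(C_w=h\,C_s\) for some \(h\in H\), right multiplication by \(\widehat T_s\) acts on \(C_s\) by the eigenvalue \(q^{1/2}\). Translating back through \(T_s=q^{1/2}\widehat T_s\) and the paper's sign convention should give the stated scalar.

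\emph{Case \(ws>w\).} The product \(C_wC_s\) is bar-invariant, being a product of bar-invariant elements. Expanding \(C_wC_s\) in the \(\widehat T\)-basis with the rule \eqref{eq:TwTj}, its leading term is \(\widehat T_{ws}\), and all other coefficients lie in \(\mathbb Z[q^{\pm1/2}]\). I will then subtract \(C_{ws}\). The remainder is bar-invariant, and its coefficients can fail to lie in \(q^{1/2}\mathbb Z[q^{1/2}]\) only through their \(q^0\) parts. A direct degree count locates those parts at \(y<w\) with \(ys<y\), with value \(\mu(y,w)\). A bar-invariant element whose \(\widehat T\)-coefficients lie in \(q^{1/2}\mathbb Z[q^{1/2}]\) apart from constants equals the corresponding integer combination of \(C_y\), by the uniqueness in the KL theorem. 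This yields the sum.

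The main obstacle is the degree count in the second case. One has to check that after subtracting \(C_{ws}\), the constant terms are precisely the \(\mu(y,w)\) with \(ys<y\), and that nothing of negative degree survives. This uses the bound \(\deg P_{y,w}\le\frac12(\ell(w)-\ell(y)-1)\) and a parity argument on \(\ell(w)-\ell(y)\). I would carry out this computation following the standard proof in Kazhdan--Lusztig's original paper, and then cite \cite{williamson2003mind} for it.
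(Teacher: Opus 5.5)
\paragraph{Verdict.} There is a genuine gap. Your overall strategy is the standard Kazhdan--Lusztig argument: induction on $\ell(w)$, showing $C_w\in H\,C_s$ when $ws<w$, and bar-invariance plus a constant-term count when $ws>w$. The paper gives no proof beyond the citation. But the normalization step you deferred is not bookkeeping, and it is exactly where the proposal fails.

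\paragraph{The element $C_s$ and the first case.} From the relation $(\widehat T_s-q^{1/2})(\widehat T_s+q^{-1/2})=0$ that you derive, $\overline{\widehat T_s}=\widehat T_s^{-1}=\widehat T_s-q^{1/2}+q^{-1/2}$. Hence $\overline{\widehat T_s+q^{1/2}}=\widehat T_s-q^{1/2}+2q^{-1/2}\neq \widehat T_s+q^{1/2}$, so your $C_s$ is not bar-invariant. The bar-invariant element of $\widehat T_s+q^{1/2}\mathbb Z[q^{1/2}]$ is $C_s=\widehat T_s-q^{1/2}$. It satisfies $C_s^2=-(q^{1/2}+q^{-1/2})C_s$ and $C_s\widehat T_s=-q^{-1/2}C_s$. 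The identities you use ($C_s^2=(q^{1/2}+q^{-1/2})C_s$ and eigenvalue $q^{1/2}$) belong to $\widehat T_s+q^{-1/2}$, which violates the paper's condition $a_{y,w}\in q^{1/2}\mathbb Z[q^{1/2}]$.

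With the correct $C_s$, your case $ws<w$ gives $C_w\widehat T_s=-q^{-1/2}C_w$, i.e.\ $C_wT_s=-C_w$. No sign convention turns this into the displayed $C_wT_s=-q^{-1/2}C_w$. Since $(T_s-q)(T_s+1)=0$, right multiplication by $T_s$ can act on a nonzero element only by $q$ or $-1$. So the displayed formulas are true only if $T_s$ is read as $\widehat T_s=q^{-1/2}T_s$; the $q^{1/2}C_w+C_{ws}$ in the second case confirms this reading. Your step ``translating back through $T_s=q^{1/2}\widehat T_s$ \ldots\ should give the stated scalar'' therefore cannot succeed. Nor does the displayed formula convert into your ``standard one'' with your $C_s$. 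Note that \cref{prop:right-cell-matrices-compact} actually uses the unnormalized forms $C_wT_s=-C_w$ and $C_wT_s=qC_w+q^{1/2}C_{ws}+\cdots$.

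\paragraph{The index of the sum.} This is also not bookkeeping. Your constant-term count is right: for $xs<x$ one has $\widehat T_xC_s=\widehat T_{xs}-q^{-1/2}\widehat T_x$. So the constant terms of $C_wC_s$ come from the $q^{1/2}$-coefficients of $C_w$, and they equal $\mu(y,w)$ at $y\prec w$ with $ys<y$. This proves $C_wC_s=C_{ws}+\sum_{y\prec w,\ ys<y}\mu(y,w)C_y$. That is a different statement from the displayed $\sum_{y\prec ws,\ ys<y}\mu(y,ws)C_y$. Already in $S_3$, with $w=s_1s_2$ and $s=s_1$, a direct expansion in the $\widehat T$-basis gives $C_{s_1s_2}C_{s_1}=C_{s_1s_2s_1}+C_{s_1}$. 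The displayed sum would instead contribute $C_{s_2s_1}$, the only $y\prec w_0$ with $ys_1<y$. The paper's own $r=j+1$ computation in \cref{prop:right-cell-matrices-compact}, which produces $q^{1/2}C_{w_{r-1}}$, also uses the $\mu(y,w)$ indexing.

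\paragraph{What your argument proves.} Carried out with consistent conventions, your argument proves a corrected statement: $\widehat T_s$ in place of $T_s$, and $\mu(y,w)$ summed over $y\prec w$. It also shows the literal statement is false. A correct write-up should fix the statement first, rather than claim the displayed formula follows after adjusting conventions.
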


\begin{proposition}[{\cite[Proposition~4.3.3]{williamson2003mind}}]
\label{prop:4.3.3}
Let $w\in{S}_n$ and let $r\in \mathcal{S}$ be a simple reflection with $wr>w$.
Then the only element $x\in{S}_n$ satisfying $w\prec x$ and $xr<x$ is $x=wr$,
and in this case $\mu(w,x)=1$ holds.
\end{proposition}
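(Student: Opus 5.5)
We prove the statement of \cref{prop:4.3.3} directly from the Kazhdan--Lusztig multiplication formula in \cref{prop:KL-right-mult} together with the degree bound on Kazhdan--Lusztig polynomials. The overall strategy is to first show that any $x$ with $w\prec x$ and $xr<x$ must satisfy $\ell(x)=\ell(w)+1$, and then identify $x$ with $wr$ using the standard property $P_{w,x}=P_{wr,x}$ when $xr<x$.

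\textbf{Step 1: the key identity.} Recall the standard fact (from the recursive construction of $C_x$, see \cite{williamson2003mind}) that if $xr<x$ then $P_{y,x}=P_{yr,x}$ for all $y$. Apply this with $y=w$. Since $wr>w$, we have $\ell(wr)=\ell(w)+1$, so $P_{w,x}=P_{wr,x}$.

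\textbf{Step 2: comparing degree bounds.} Suppose first that $wr\neq x$. Then $wr\le x$: this holds because $w\le x$ with $xr<x$ and $wr>w$ (the lifting property). Hence $wr<x$ strictly, and the degree bound gives
\[
\deg P_{wr,x}\le \tfrac12\bigl(\ell(x)-\ell(w)-2\bigr).
\]
On the other hand, $w\prec x$ means that $P_{w,x}$ attains its maximal possible degree $\tfrac12(\ell(x)-\ell(w)-1)$ with nonzero coefficient $\mu(w,x)$. This contradicts the equality $P_{w,x}=P_{wr,x}$, so $x=wr$.

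\textbf{Step 3: the converse and the value of $\mu$.} Conversely, set $x=wr$. Then $w<x$, $xr=w<x$, and $P_{w,wr}=P_{wr,wr}=1$. The degree bound gives $\tfrac12(\ell(x)-\ell(w)-1)=0$, so the top coefficient is $\mu(w,wr)=1$, and in particular $w\prec wr$.

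\textbf{Main obstacle.} The main obstacle is justifying the identity $P_{y,x}=P_{yr,x}$ for $xr<x$, which is not stated earlier in the paper. I would derive it from the first case of \cref{prop:KL-right-mult}: $C_xT_r=-q^{-1/2}C_x$ when $xr<x$. Expand $C_x$ in the $\widehat T$ basis and compare the coefficients of $\widehat T_y$ and $\widehat T_{yr}$ on both sides, using \eqref{eq:TwTj}. This should force the coefficients in each pair $\{y,yr\}$ to be related by a factor of $q^{\pm1/2}$, which, translated back to $P$, is the desired equality. Once this identity is established, the rest of the argument is routine degree bookkeeping.
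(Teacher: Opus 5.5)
Your proof is correct and is the standard Kazhdan--Lusztig argument. The paper gives no proof of its own, only the citation \cite[Proposition~4.3.3]{williamson2003mind}, and your three ingredients are exactly what is needed: $P_{w,x}=P_{wr,x}$ for $xr<x$, the degree bound applied to $P_{wr,x}$ when $x\neq wr$, and $P_{w,wr}=P_{wr,wr}=1$.

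One caution concerns the step you flag as the main obstacle. The first case of \cref{prop:KL-right-mult} cannot hold as literally printed. Since $(T_r-q)(T_r+1)=0$, every eigenvalue of right multiplication by $T_r$ must be $q$ or $-1$, never $-q^{-1/2}$. You should instead use the correctly normalized relation $C_x\widehat T_r=-q^{-1/2}C_x$, equivalently $C_xT_r=-C_x$. Comparing coefficients of $\widehat T_y$ and $\widehat T_{yr}$ for $y<yr$ then gives $a_{yr,x}=-q^{-1/2}a_{y,x}$. This becomes $P_{yr,x}=P_{y,x}$ once you use the explicit formula $a_{y,x}=(-1)^{\ell(x)+\ell(y)}q^{(\ell(x)-\ell(y))/2}P_{y,x}(q^{-1})$, which the appendix leaves implicit.
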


\subsection*{Cells and the Robinson--Schensted correspondence}
Fix the Kazhdan--Lusztig basis $\{C_w\}$. Define a preorder $\leftarrow_L$ on $S_n$ by: $x\leftarrow_L y$ iff there exists $a\in H_n(q)$ such that the expansion of $aC_y$ contains $C_x$ with nonzero coefficient. Let $\le_L$ be the transitive closure of $\leftarrow_L$. Right and two-sided preorders $\le_R$, $\le_{LR}$ are defined analogously. A \emph{left cell} is an equivalence class for the relation $x\sim_L y \iff x\le_L y\le_L x$; right cells $\sim_R$ and two-sided cells $\sim_{LR}$ are defined similarly.

For $w\in{S}_n$, let $H(\le_L w)$ (resp.\ $H(<_L w)$) be the $A$--span of $\{C_x : x\le_L w\}$ (resp.\ $\{C_x : x<_L w\}$). The quotient
\[
W^{(L)}(w)\;:=\; H(\le_L w)\big/H(<_L w)
\]
is a free $A$--module whose basis consists of the images of $C_x$ with $x\sim_L w$, and it carries a natural left $H_n(q)$--module structure called the \emph{cell representation} associated to the left cell of $w$. Right cell representations $W^{(R)}(w)$ are defined analogously.

In type $A$, cells admit a concrete combinatorial description via the Robinson--Schensted correspondence.

\begin{defin}[Robinson--Schensted insertion]
Let $w=w_1w_2\cdots w_n\in{S}_n$ be a permutation considered as a word.  Define sequences of tableaux \(P(k)\) and \(Q(k)\) recursively as follows. Set \(P(0)=Q(0)=\emptyset\). For \(k=1,\dots,n\), set
\[
P(k)=P(k-1)\leftarrow w_k,\qquad
\]
\[
Q(k)=Q(k-1)\,\text{with the entry }k\text{ placed in the box of }P(k)\setminus P(k-1),
\]
where $T\leftarrow x$ denotes the usual row-insertion of the letter $x$ into the tableau $T$ (see the following example).
Then the \emph{insertion tableau} (or $P$-tableau) and the \emph{recording tableau} (or $Q$-tableau)
of $w$ are defined by $P(w):=P(n)$ and $Q(w):=Q(n)$,
so that the Robinson--Schensted correspondence associates the \(P\)-\(Q\) pair to $w$ under the map $w\mapsto (P(w),Q(w))$.
\end{defin}

\begin{ex}\label{ex:RS-31524}
Let \(w=31524\). The successive insertion and recording tableaux are as follows:
\[
\setlength{\arraycolsep}{0.8em}
\renewcommand{\arraystretch}{1.55}
\begin{array}{@{}c l c c@{}}
\toprule
k & \text{operation} & P(k) & Q(k) \\
\midrule
1
& \text{insert }3
& \RSyt{3}
& \RSyt{1}
\\[4pt]

2
& \text{insert }1;\ 3\text{ is bumped to the second row}
& \RSyt{1,3}
& \RSyt{1,2}
\\[4pt]

3
& \text{insert }5;\ \text{append to the first row}
& \RSyt{15,3}
& \RSyt{13,2}
\\[4pt]

4
& \text{insert }2;\ 2\text{ bumps }5,\text{ which is appended to the second row}
& \RSyt{12,35}
& \RSyt{13,24}
\\[4pt]

5
& \text{insert }4;\ \text{append to the first row}
& \RSyt{124,35}
& \RSyt{135,24}
\\
\bottomrule
\end{array}
\]
Therefore
\[
P(31524)=\ytableaushort{124,35},
\qquad
Q(31524)=\ytableaushort{135,24}.
\]
\end{ex}

\begin{defin}[Elementary Knuth transformations]
Let $u$ be a permutation and suppose that three adjacent letters of $u$ form a triple $\{x,y,z\}$ with $x<y<z$.
The \emph{elementary Knuth transformations} are the local rewrites
\[
\cdots zxy\cdots \longleftrightarrow \cdots xzy\cdots,\qquad
\cdots yxz\cdots \longleftrightarrow \cdots yzx\cdots,
\]
leaving the remainder of the word unchanged.  Two permutations $u$ and $v$ are called \emph{Knuth equivalent},
written $u \equiv_K  v$, if one can be obtained from the other by a finite sequence of elementary Knuth transformations.
\end{defin}

\begin{proposition}
[{\cite[Proposition~2.6.1]{williamson2003mind}}]\label{prop:2.6.1}
    Two permutations $u$ and $v$ share the same $P$-symbol if and only if they are Knuth equivalent.
\end{proposition}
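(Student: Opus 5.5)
The plan is to use the reading word of a tableau as a normal form for Knuth classes, and to prove the two implications separately. For a standard tableau $P$, let $\mathrm{read}(P)$ be the word obtained by reading its rows from the bottom row to the top row, each row left to right. Since row insertion of $\mathrm{read}(P)$ rebuilds $P$ row by row, we have $P(\mathrm{read}(P))=P$.

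\emph{Step 1: $w\equiv_K \mathrm{read}(P(w))$ for every $w$.} We argue by induction on the length of $w$. It suffices to show that for any tableau $T$ and letter $x$,
\[
\mathrm{read}(T)\,x\equiv_K \mathrm{read}(T\leftarrow x).
\]
Look at a single row $r_1<\cdots<r_m$ into which $x$ is inserted, and suppose $x$ bumps $r_j$. We move $x$ leftward past $r_m,\dots,r_{j+1}$ using moves of type $yxz\leftrightarrow yzx$. Then we move $r_j$ leftward past $r_{j-1},\dots,r_1$ using moves of type $xzy\leftrightarrow zxy$. This turns $r_1\cdots r_m x$ into $r_j\,r_1\cdots r_{j-1}\,x\,r_{j+1}\cdots r_m$, so the bumped letter now sits just to the left of the new row. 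It is therefore appended to the reading word of the rows below, and we iterate row by row. If $x$ is larger than every entry of the row, nothing needs to be done. Step 1 gives the direction ``same $P$-symbol $\Rightarrow$ Knuth equivalent'': if $P(u)=P(v)$, then $u\equiv_K\mathrm{read}(P(u))=\mathrm{read}(P(v))\equiv_K v$.

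\emph{Step 2: Knuth equivalence implies equal $P$-symbols.} By Step 1 it is enough to show that two tableaux whose reading words are Knuth equivalent must coincide. I would use two facts.

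The first is Greene's theorem: for each $k$, the maximum total length of a union of $k$ disjoint increasing subsequences of $w$ equals $\lambda_1+\cdots+\lambda_k$, where $\lambda$ is the shape of $P(w)$. For a reading word this is easy to verify directly. One checks that this maximum is unchanged by an elementary Knuth move, via a local exchange argument on the three letters involved. Hence Knuth-equivalent words have the same shape.

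The second fact is that restricting a word to the letters $\le m$ commutes with row insertion, in the sense that $P(w|_{\le m})=P(w)|_{\le m}$. It also preserves Knuth equivalence, because a Knuth move either survives the restriction as a Knuth move or becomes a trivial swap of two remaining letters that changes nothing.

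Given these facts, suppose $u\equiv_K v$. For every $m$, the restricted words $u|_{\le m}$ and $v|_{\le m}$ are Knuth equivalent, so $P(u)|_{\le m}$ and $P(v)|_{\le m}$ have the same shape. A standard tableau is determined by its chain of shapes $\operatorname{sh}(P|_{\le m})$ for $m=1,\dots,n$, so $P(u)=P(v)$.

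\emph{Main obstacle.} I expect the hardest part to be the invariance of Greene's statistics under a Knuth move. Going from an optimal family of $k$ increasing subsequences of $xzy$ to one of $zxy$ requires rerouting the subsequence that passes through the pair that has been moved, and this may force two of the subsequences to exchange tails. I would first try the case analysis of Fulton's \emph{Young Tableaux}, Lemma 3.1. The alternative is to avoid Greene's theorem altogether and directly check that $T\leftarrow x\,z\,y=T\leftarrow z\,x\,y$ when $x<y\le z$ (and similarly for the other move) by tracking the bumping paths. That is more mechanical, but it gives the same conclusion.
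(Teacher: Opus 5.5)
The paper gives no proof of this statement. It quotes it from Williamson and uses it only, together with the quoted Corollary~5.4.2, to identify the right cell $W^{(R)}(s_1)$ in the section on Hecke-theoretic remarks. So there is no argument in the paper to match. Your proposal is a correct, self-contained proof along the lines of Fulton's \emph{Young Tableaux}.

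The two chains of moves in Step~1 do realize one row insertion. Each step $r_{k-1}r_kx\to r_{k-1}xr_k$ uses $x<r_{k-1}<r_k$, and each step $r_kr_jt\to r_jr_kt$ uses $r_k<t<r_j$. Since $\equiv_K$ is a congruence, iterating down the rows is legitimate.

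In Step~2, the reading-word case of Greene's theorem is a column count. The exchange lemma you flag goes through as you expect:
\begin{itemize}
\item For $yzx\to yxz$ (with $x<y<z$) nothing breaks, because no increasing subsequence uses $z$ before $x$.
\item In the other direction, a sequence $\alpha\,xz\,\beta$ is repaired by replacing $x$ with $y$ if $y$ is unused.
\item Otherwise, exchange tails with the sequence $\gamma\,y\,\delta$ to get $\gamma\,yz\,\beta$ and $\alpha\,x\,\delta$.
\item The move $xzy\leftrightarrow zxy$ is handled the same way.
\end{itemize}

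One phrase in the restriction step should be corrected. When exactly two of the three letters survive, they are $x$ and $y$, and their relative order is the same on both sides of either move. So the restricted words are literally equal; nothing is swapped. This relies on $\{t\le m\}$ meeting $\{x<y<z\}$ in an initial segment. Deleting only $y$ would turn a Knuth move into an arbitrary transposition $xz\leftrightarrow zx$.

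As for what each route buys: yours proves more than is needed. It gives Greene's theorem, and, with your observation $P(\mathrm{read}(P))=P$, that each Knuth class contains exactly one tableau reading word. The price is the exchange lemma and the compatibility $P(w|_{\le m})=P(w)|_{\le m}$.

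The alternative you mention is to check $T\leftarrow zxy=T\leftarrow xzy$ and $T\leftarrow yxz=T\leftarrow yzx$ along bumping paths. This replaces Step~2 outright, since $P$ is built by successive insertion and Knuth moves are local. It is pure case analysis but needs neither Greene's invariants nor restriction.

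For permutations its hypothesis is simply $x<y<z$. In the word version, the correct weak inequality for the move $xzy\leftrightarrow zxy$ is $x\le y<z$, not $x<y\le z$. For example, $xzz$ and $zxz$ have different $P$-tableaux.
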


\begin{proposition}[{\cite[Corrollary~5.4.2]{williamson2003mind}}]\label{corr:5.4.2}
Let $x,y\in{S}_n$.  Then $x$ and $y$ lie in the same right cell with respect to the
Kazhdan--Lusztig basis if and only if their insertion tableaux coincide:
\[
x\sim_{R} y \quad\Longleftrightarrow\quad P(x)=P(y).
\]
Equivalently, right cells are exactly the fibres of the map $w\mapsto P(w)$ under the
Robinson--Schensted correspondence.
\end{proposition}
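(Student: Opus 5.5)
The statement to prove is the appendix proposition quoted from the Williamson reference: $x\sim_R y$ if and only if $P(x)=P(y)$. My plan is to reduce it to the dual statement for left cells, then transport via inversion. The anti-involution $\star$ sends $C_w\mapsto C_{w^{-1}}$, because it preserves the defining properties of the Kazhdan--Lusztig basis: it commutes with $\iota$, and it maps $\widehat T_w$ to $\widehat T_{w^{-1}}$ with Bruhat order preserved under inversion. Since $\star$ turns left multiplication into right multiplication, we get $x\le_R y \iff x^{-1}\le_L y^{-1}$. By the symmetry of Robinson--Schensted, $P(x)=Q(x^{-1})$. So it suffices to prove that $x\sim_L y$ if and only if $Q(x)=Q(y)$, equivalently $P(x^{-1})=P(y^{-1})$. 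One could equally work directly with right cells and $P$; I will phrase the argument with right cells to match the statement.

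The first direction is that Knuth equivalence implies lying in the same right cell. By \cref{prop:2.6.1}, it suffices to treat one elementary Knuth move. Right multiplication by $s_i$ acts on positions, so a Knuth move $\cdots zxy\cdots\leftrightarrow\cdots xzy\cdots$ in positions $i,i+1,i+2$ relates $w$ and $w'=ws_i$. In this situation exactly one of $s_i,s_{i+1}$ lies in the right descent set of $w$, and the other lies in that of $w'$. Suppose $w'=ws>w$, $ws'<w$, and $w's'>w'$. Then \cref{prop:KL-right-mult} gives a term $C_{w'}$ in $C_wT_s$, so $w'\le_R w$. For the reverse inequality, expand $C_{w'}T_{s'}$: the sum over $y\prec w's'$ with $ys'<y$ contains $C_w$. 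This holds because $w\prec w'$ (they differ by one simple reflection with $\mu=1$) and $ws'<w$, using \cref{prop:4.3.3} to pin down the $\mu$-value. Hence $w\le_R w'$, and the two are in the same right cell.

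The converse is that lying in the same right cell forces equal $P$-symbols, and I expect this to be the main obstacle. My plan is to use the descent-set / $\tau$-invariant. The key facts are:
\begin{itemize}
\item $x\le_R y$ implies $\mathcal R(x)\supseteq\mathcal R(y)$, which follows from the first case of the multiplication formula;
\item the right descent set is an invariant of the insertion tableau, since it is read off from $P(x)$ via the relative positions of $i$ and $i+1$.
\end{itemize}
This alone is too coarse. I would refine it with Vogan's generalized $\tau$-invariant, which is obtained by iterating the Knuth "star" operations. These star operations preserve $\sim_R$ by the argument of the previous paragraph, and the resulting invariant determines the $P$-tableau in type $A$. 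The fallback is the standard count. Knuth classes are at least as fine as cells by the previous step, so it is enough to show the number of right cells is at least the number of standard tableaux. That count follows by computing cell representations: they are irreducible Specht-type modules of dimension $f^\lambda$, and $\sum_\lambda (f^\lambda)^2=n!$ forces each cell to be a single Knuth class.
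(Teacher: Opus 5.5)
The paper gives no proof of this statement; it only cites Williamson, so your proposal has to stand on its own. The first direction (equal $P$-symbols imply the same right cell) is correct in substance, but it needs one repair. The formula in \cref{prop:KL-right-mult} is misprinted. In the Kazhdan--Lusztig formula for $ws>w$, the sum runs over $y\prec w$ with coefficient $\mu(y,w)$, not over $y\prec ws$. Your justification ($w\prec w'$ with $\mu(w,w')=1$, and $ws'<w$) is exactly what the \emph{correct} formula needs for $C_w$ to occur in $C_{w'}T_{s'}$. Under the printed version, the sum would run over $y\prec w's'$, and $C_w$ could not occur, because $\ell(w's')-\ell(w)=2$ forces $\mu(w,w's')=0$. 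So state and use the correct formula.

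The genuine gap is the converse, and it has three parts.

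\emph{The descent-set facts are on the wrong side.} For right cells, which are built from right multiplication and where Knuth moves are $w\mapsto ws_i$, the invariant is the \emph{left} descent set $\mathcal L(x)=\{s: sx<x\}$. By the left multiplication formula, the span of $\{C_y: sy<y\}$ is an eigenspace of left multiplication by $T_s$, hence a right ideal; this gives $x\le_R y\Rightarrow \mathcal L(x)\supseteq\mathcal L(y)$. It is also $\mathcal L(x)$ (values $i$ with $i+1$ to the left of $i$) that is read off from $P(x)$. The right descent set (positions $i$ with $x(i)>x(i+1)$) is read off from $Q(x)$. Your own Knuth-move computation shows that the right descent set changes inside a right cell. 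Concretely, $s_1=2134$ and $s_1s_2=2314$ lie in the right cell $W^{(R)}(s_1)$ of \cref{sec:further-remarks}, and $s_1s_2\le_R s_1$, yet their right descent sets are $\{s_2\}$ and $\{s_1\}$.

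\emph{The generalized $\tau$-invariant step does not follow from your second paragraph.} Even after the correction, the invariant must be built from star operations on the \emph{other} side, $x\mapsto s_ix$. These are dual Knuth moves, which change $P$ and fix $Q$. What is needed is equivariance: if $x\sim_R y$ and both admit the operation, then ${}^*x\sim_R{}^*y$. This is Kazhdan and Lusztig's star-operation theorem, a genuine computation with the $\mu$-function. Your second paragraph proves only $x\sim_R x^*$ for same-side operations, which leave $P$ unchanged and so cannot separate $P$-classes. You would also need Vogan's result that in type $A$ the generalized $\tau$-invariant of a standard tableau determines it. Both facts are only asserted.

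\emph{The fallback is circular.} You cannot compute the right cell modules before knowing what the cells are. Once cells are known to be unions of Knuth classes, ``every right cell module is irreducible of dimension $f^\lambda$'' is equivalent to the statement itself, via precisely the count $\sum_\lambda (f^\lambda)^2=n!$ that you invoke.
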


\bibliographystyle{alpha}
\bibliography{ref} 

@misc{MPPY2025,
      title={Grothendieck Shenanigans: Permutons from pipe dreams via integrable probability}, 
      author={Alejandro H. Morales and Greta Panova and Leonid Petrov and Damir Yeliussizov},
      year={2025},
      eprint={2407.21653},
      archivePrefix={arXiv},
      primaryClass={math.PR},
      url={https://arxiv.org/abs/2407.21653}, 
}

@article {BB93,
    AUTHOR = {Bergeron, Nantel and Billey, Sara},
     TITLE = {R{C}-graphs and {S}chubert polynomials},
   JOURNAL = {Experiment. Math.},
  FJOURNAL = {Experimental Mathematics},
    VOLUME = {2},
      YEAR = {1993},
    NUMBER = {4},
     PAGES = {257--269},
      ISSN = {1058-6458,1944-950X},
   MRCLASS = {05E99 (05E05 14M15 20C30)},
  MRNUMBER = {1281474},
MRREVIEWER = {Axel\ Kohnert},
       URL = {http://projecteuclid.org/euclid.em/1048516036},
}

@article {LS82,
    AUTHOR = {Lascoux, Alain and Sch\"utzenberger, Marcel-Paul},
     TITLE = {Polyn\^omes de {S}chubert},
   JOURNAL = {C. R. Acad. Sci. Paris S\'er. I Math.},
  FJOURNAL = {Comptes Rendus des S\'eances de l'Acad\'emie des Sciences.
              S\'erie I. Math\'ematique},
    VOLUME = {294},
      YEAR = {1982},
    NUMBER = {13},
     PAGES = {447--450},
      ISSN = {0249-6291},
   MRCLASS = {14M17 (05A10 14N10)},
  MRNUMBER = {660739},
}

@article {KM05,
    AUTHOR = {Knutson, Allen and Miller, Ezra},
     TITLE = {Gr\"obner geometry of {S}chubert polynomials},
   JOURNAL = {Ann. of Math. (2)},
  FJOURNAL = {Annals of Mathematics. Second Series},
    VOLUME = {161},
      YEAR = {2005},
    NUMBER = {3},
     PAGES = {1245--1318},
      ISSN = {0003-486X,1939-8980},
   MRCLASS = {05E15 (13C40 13F55 13P10 14M15 14N15)},
  MRNUMBER = {2180402},
MRREVIEWER = {Harry\ Tamvakis},
       DOI = {10.4007/annals.2005.161.1245},
       URL = {https://doi.org/10.4007/annals.2005.161.1245},
}

@article {KM04,
    AUTHOR = {Knutson, Allen and Miller, Ezra},
     TITLE = {Subword complexes in {C}oxeter groups},
   JOURNAL = {Adv. Math.},
  FJOURNAL = {Advances in Mathematics},
    VOLUME = {184},
      YEAR = {2004},
    NUMBER = {1},
     PAGES = {161--176},
      ISSN = {0001-8708,1090-2082},
   MRCLASS = {20F55 (05E05 05E15 05E25 13F55)},
  MRNUMBER = {2047852},
MRREVIEWER = {Jian-yi\ Shi},
       DOI = {10.1016/S0001-8708(03)00142-7},
       URL = {https://doi.org/10.1016/S0001-8708(03)00142-7},
}

@article {KY04,
    AUTHOR = {Knutson, Allen and Yong, Alexander},
     TITLE = {A formula for {$K$}-theory truncation {S}chubert calculus},
   JOURNAL = {Int. Math. Res. Not.},
  FJOURNAL = {International Mathematics Research Notices},
      YEAR = {2004},
      Note = {No. 70},
     PAGES = {3741--3756},
      ISSN = {1073-7928,1687-0247},
   MRCLASS = {14M15 (14C35 19E08)},
  MRNUMBER = {2101981},
MRREVIEWER = {Xuhua\ He},
       DOI = {10.1155/S1073792804142244},
       URL = {https://doi.org/10.1155/S1073792804142244},
}

@article {KoM05,
    AUTHOR = {Kogan, Mikhail and Miller, Ezra},
     TITLE = {Toric degeneration of {S}chubert varieties and
              {G}elfand-{T}setlin polytopes},
   JOURNAL = {Adv. Math.},
  FJOURNAL = {Advances in Mathematics},
    VOLUME = {193},
      YEAR = {2005},
    NUMBER = {1},
     PAGES = {1--17},
      ISSN = {0001-8708,1090-2082},
   MRCLASS = {14M15},
  MRNUMBER = {2132758},
MRREVIEWER = {Kiumars\ Kaveh},
       DOI = {10.1016/j.aim.2004.03.017},
       URL = {https://doi.org/10.1016/j.aim.2004.03.017},
}

@article {WY12,
    AUTHOR = {Woo, Alexander and Yong, Alexander},
     TITLE = {A {G}r\"obner basis for {K}azhdan-{L}usztig ideals},
   JOURNAL = {Amer. J. Math.},
  FJOURNAL = {American Journal of Mathematics},
    VOLUME = {134},
      YEAR = {2012},
    NUMBER = {4},
     PAGES = {1089--1137},
      ISSN = {0002-9327,1080-6377},
   MRCLASS = {14M15 (13P10)},
  MRNUMBER = {2956258},
MRREVIEWER = {Venkatramani\ Lakshmibai},
       DOI = {10.1353/ajm.2012.0031},
       URL = {https://doi.org/10.1353/ajm.2012.0031},
}

@article {KST12,
    AUTHOR = {Kirichenko, V. A. and Smirnov, E. Yu. and Timorin, V. A.},
     TITLE = {Schubert calculus and {G}elfand-{T}setlin polytopes},
   JOURNAL = {Uspekhi Mat. Nauk},
  FJOURNAL = {Uspekhi Matematicheskikh Nauk},
    VOLUME = {67},
      YEAR = {2012},
    NUMBER = {4(406)},
     PAGES = {89--128},
      ISSN = {0042-1316,2305-2872},
   MRCLASS = {14N15 (14M15 52B12)},
  MRNUMBER = {3013846},
MRREVIEWER = {\c Serban\ B\u arc\u anescu},
       DOI = {10.1070/RM2012v067n04ABEH004804},
       URL = {https://doi.org/10.1070/RM2012v067n04ABEH004804},
}

@article {PSW24,
    AUTHOR = {Pechenik, Oliver and Speyer, David E. and Weigandt, Anna},
     TITLE = {Castelnuovo-{M}umford regularity of matrix {S}chubert
              varieties},
   JOURNAL = {Selecta Math. (N.S.)},
  FJOURNAL = {Selecta Mathematica. New Series},
    VOLUME = {30},
      YEAR = {2024},
    NUMBER = {4},
     PAGES = {Paper No. 66, 44},
      ISSN = {1022-1824,1420-9020},
   MRCLASS = {05E05 (05E40 13C40 14M15)},
  MRNUMBER = {4768772},
MRREVIEWER = {Giuseppe\ Favacchio},
       DOI = {10.1007/s00029-024-00959-x},
       URL = {https://doi.org/10.1007/s00029-024-00959-x},
}

@article {HS24,
    AUTHOR = {Huang, Daoji and Striker, Jessica},
     TITLE = {A pipe dream perspective on totally symmetric
              self-complementary plane partitions},
   JOURNAL = {Forum Math. Sigma},
  FJOURNAL = {Forum of Mathematics. Sigma},
    VOLUME = {12},
      YEAR = {2024},
     PAGES = {Paper No. e17, 19},
      ISSN = {2050-5094},
   MRCLASS = {05A19},
  MRNUMBER = {4696012},
MRREVIEWER = {Sam\ Hopkins},
       DOI = {10.1017/fms.2023.131},
       URL = {https://doi.org/10.1017/fms.2023.131},
}

@article {GH23,
    AUTHOR = {Gao, Yibo and Huang, Daoji},
     TITLE = {The canonical bijection between pipe dreams and bumpless pipe
              dreams},
   JOURNAL = {Int. Math. Res. Not. IMRN},
  FJOURNAL = {International Mathematics Research Notices. IMRN},
      YEAR = {2023},
      Note = {No. 21},
     PAGES = {18629--18663},
      ISSN = {1073-7928,1687-0247},
   MRCLASS = {14M15},
  MRNUMBER = {4665632},
MRREVIEWER = {Arpita\ Nayek},
       DOI = {10.1093/imrn/rnad083},
       URL = {https://doi.org/10.1093/imrn/rnad083},
}

@article {MPP19,
    AUTHOR = {Morales, Alejandro H. and Pak, Igor and Panova, Greta},
     TITLE = {Asymptotics of principal evaluations of {S}chubert polynomials
              for layered permutations},
   JOURNAL = {Proc. Amer. Math. Soc.},
  FJOURNAL = {Proceedings of the American Mathematical Society},
    VOLUME = {147},
      YEAR = {2019},
    NUMBER = {4},
     PAGES = {1377--1389},
      ISSN = {0002-9939,1088-6826},
   MRCLASS = {05A05 (05A16 05E05 14N15)},
  MRNUMBER = {3910405},
MRREVIEWER = {Arthur\ L. B. Yang},
       DOI = {10.1090/proc/14369},
       URL = {https://doi.org/10.1090/proc/14369},
}

@article {G21,
    AUTHOR = {Gao, Yibo},
     TITLE = {Principal specializations of {S}chubert polynomials and
              pattern containment},
   JOURNAL = {European J. Combin.},
  FJOURNAL = {European Journal of Combinatorics},
    VOLUME = {94},
      YEAR = {2021},
     PAGES = {Paper No. 103291, 12},
      ISSN = {0195-6698,1095-9971},
   MRCLASS = {05E16 (05E14 14N15)},
  MRNUMBER = {4192112},
MRREVIEWER = {Artem\ A.\ Lopatin},
       DOI = {10.1016/j.ejc.2020.103291},
       URL = {https://doi.org/10.1016/j.ejc.2020.103291},
}

@misc {williamson2003mind,
  title={Mind your P and Q-symbols: Why the Kazhdan-Lusztig basis of the Hecke algebra of type A is cellular},
  author={Williamson, Geordie},
  year={2003}
}

@article{BottSamelson1958,
  author  = {Bott, Raoul and Samelson, Hans},
  title   = {Applications of the theory of Morse to symmetric spaces},
  journal = {American Journal of Mathematics},
  volume  = {80},
  number  = {4},
  pages   = {964--1029},
  year    = {1958},
  publisher = {Johns Hopkins University Press}
}

@article{Demazure1974,
  author  = {Demazure, Michel},
  title   = {D{\'e}singularisation des vari{\'e}t{\'e}s de {S}chubert g{\'e}n{\'e}ralis{\'e}es},
  journal = {Annales Scientifiques de l'{\'E}cole Normale Sup{\'e}rieure},
  series  = {4},
  volume  = {7},
  number  = {1},
  pages   = {53--88},
  year    = {1974}
}

@article{Hansen1973,
  author  = {Hansen, Hans},
  title   = {On cycles on the flag manifold},
  journal = {Matematisk Institut Preprint Series},
  Note  = {No. 11},
  pages   = {1--14},
  year    = {1973},
  publisher = {University of Copenhagen}
}

@article{Sta17b,
  author    = {Richard P. Stanley},
  title     = {Some Schubert shenanigans},
  journal   = {S\'eminaire Lotharingien de Combinatoire},
  volume    = {78B},
  pages     = {Art. 36, 12 pp.},
  year      = {2017},
  note      = {FPSAC 2017}
}

@book{Humphreys1990,
  title={Reflection Groups and Coxeter Groups},
  author={Humphreys, James E.},
  series={Cambridge Studies in Advanced Mathematics},
  volume={29},
  year={1990},
  publisher={Cambridge University Press}
}

@book{BjornerBrenti2005,
  title={Combinatorics of Coxeter Groups},
  author={Bj{\"o}rner, Anders and Brenti, Francesco},
  series={Graduate Texts in Mathematics},
  volume={231},
  year={2005},
  publisher={Springer}
}

@article{GutierresMamedeSantos2020,
  author  = {Gutierres, Gon{\c{c}}alo and Mamede, Ricardo and Santos, Jos{\'e} Luis},
  title   = {Commutation Classes of the Reduced Words for the Longest Element of {$\mathfrak{S}_n$}},
  journal = {The Electronic Journal of Combinatorics},
  volume  = {27},
  number  = {2},
  pages   = {P2.21},
  year    = {2020},
  doi     = {10.37236/9481}
}

@article{Elnitsky1997,
  author  = {Elnitsky, Serge},
  title   = {Rhombic Tilings of Polygons and Classes of Reduced Words in Coxeter Groups},
  journal = {Journal of Combinatorial Theory, Series A},
  volume  = {77},
  number  = {2},
  pages   = {193--221},
  year    = {1997},
  doi     = {10.1006/jcta.1997.2723}
}

@article{AwikBrelandCadmanErnst2024,
  author  = {Awik, Fadi and Breland, Jadyn and Cadman, Quentin and Ernst, Dana C.},
  title   = {Braid Graphs in Simply-Laced Triangle-Free Coxeter Systems Are Partial Cubes},
  journal = {European Journal of Combinatorics},
  volume  = {118},
  pages   = {103927},
  year    = {2024},
  doi     = {10.1016/j.ejc.2024.103927},
  eprint  = {2104.12318},
  archivePrefix = {arXiv},
  primaryClass  = {math.CO}
}

@article{Eti84,
  AUTHOR = {Etienne, Gwihen},
  TITLE = {Linear extensions of finite posets and a conjecture of {G}. {Kreweras} on permutations},
  JOURNAL = {Discrete Math.},
  FJOURNAL = {Discrete Mathematics},
  VOLUME = {52},
  YEAR = {1984},
  NUMBER = {1},
  PAGES = {107--111},
  ISSN = {0012-365X},
  DOI = {10.1016/0012-365X(84)90108-0},
  URL =
    {https://www.sciencedirect.com/science/article/pii/0012365X84901080},
  PUBLISHER = {Elsevier},
}

\end{document}